\documentclass[12pt]{article}
\usepackage[UKenglish]{babel}
\usepackage[T1]{fontenc}
\usepackage{lmodern,amsmath,amsthm,amsfonts,amssymb,graphicx,float,microtype,thmtools,underscore,mathtools,thm-restate}
\usepackage[shortlabels]{enumitem}
\setlist[itemize]{topsep=0ex,itemsep=0ex,parsep=0ex}
\setlist[enumerate]{topsep=0ex,itemsep=0ex,parsep=0ex}
\usepackage[usenames,dvipsnames,svgnames,table]{xcolor}
\usepackage{todonotes}
\usepackage[unicode=true]{hyperref}
\usepackage{breakurl}
\hypersetup{ 
colorlinks,
linkcolor={blue!60!black},
citecolor={black},
urlcolor={blue!60!black},
pdftitle={Path Independence Number}}
\usepackage[capitalise, compress, nameinlink, noabbrev]{cleveref}
\crefname{lem}{Lemma}{Lemmas}
\crefname{thm}{Theorem}{Theorems}
\crefname{ques}{Question}{Theorems}
\crefname{cor}{Corollary}{Corollaries}
\crefname{enumi}{Item}{Items}
\crefformat{equation}{(#2#1#3)}
\Crefformat{equation}{Equation #2(#1)#3}
\crefformat{enumi}{#2#1#3}
\Crefformat{enumi}{Item (#2#1#3)}
\newcommand{\defn}[1]{\textcolor{Maroon}{\emph{#1}}}
\usepackage[longnamesfirst,numbers,sort&compress]{natbib}
\makeatletter
\def\NAT@spacechar{~}
\makeatother
\usepackage[margin=28mm]{geometry}
\renewcommand{\baselinestretch}{1.1}
\allowdisplaybreaks
\DeclarePairedDelimiter{\ceil}{\lceil}{\rceil}

\renewcommand{\epsilon}{\varepsilon}
\renewcommand{\emptyset}{\varnothing}
\renewcommand{\geq}{\geqslant}
\renewcommand{\leq}{\leqslant}
\DeclareMathOperator{\dist}{dist}
\DeclareMathOperator{\tw}{tw}
\DeclareMathOperator{\atw}{\alpha-tw}
\DeclareMathOperator{\apw}{\alpha-pw}

\DeclareMathOperator{\pw}{pw}

\DeclareMathOperator{\asep}{sep_{\alpha}}

\newcommand{\BB}{\mathcal{B}}

\newcommand{\GG}{\mathcal{G}}
\newcommand{\HH}{\mathcal{H}}

\newcommand{\NN}{\mathbb{N}}

\renewcommand{\thefootnote}{\fnsymbol{footnote}}
\theoremstyle{plain}
\newtheorem{thm}{Theorem}
\newtheorem{lem}[thm]{Lemma}

\newtheorem{prop}[thm]{Proposition}

\newtheorem{obs}[thm]{Observation}
\newtheorem{claim}{Claim}
\crefname{obs}{Observation}{Observations}
\newtheorem*{lem*}{Lemma}
\theoremstyle{definition}
\newtheorem{conj}[thm]{Conjecture}
\newtheorem*{conj*}{Conjecture}
\presetkeys%
{todonotes}%
{inline}{}
\date{}

\begin{document}

\title{\bf\fontsize{18pt}{18pt}\selectfont Induced Forest Minor Theorem for Graphs Without an Induced Star}

\author{Robert Hickingbotham\,\footnotemark[1]	\quad \quad \quad Gwena\"el Joret\,\footnotemark[1]	}

\footnotetext[1]{D\'epartement d'Informatique, Universit\'e libre de Bruxelles, Belgium ({\tt robert.hickingbotham@ulb.be}, {\tt gwenael.joret@ulb.be}). R.\ Hickingbotham and G.\ Joret are supported by the Belgian National Fund for Scientific Research (FNRS).}

\maketitle
\begin{abstract}
Motivated by recent work on tree independence number, we study the \emph{path independence number} of a graph $G$: the minimum integer $k$ such that there is a path decomposition of $G$ where each bag induces a graph with independence number at most $k$. We show that every graph excluding both an induced forest minor and an induced star has bounded path independence number. This characterises when a graph class that excludes an induced star has bounded path independence number while also partially resolving a conjecture of Dallard, Krnc, Kwon, Milani{\v{c}}, Munaro, {\v{S}}torgel and Wiederrecht (2024). Furthermore, we show that graphs excluding both an apex-forest induced minor and an induced star have bounded tree independence number. As a consequence, for every fixed apex-forest $H$ and integer $t$, there is a polynomial-time algorithm to test whether a $K_{1,t}$-induced-subgraph-free graph contains $H$ as an induced minor. Moreover, it follows that the Maximum Weight Independent Set problem, as well as several other NP-hard problems, can be solved in polynomial-time on $K_{1,t}$-induced-subgraph-free graphs that exclude $H$ as an induced minor. 
\end{abstract}

\renewcommand{\thefootnote}{\arabic{footnote}}

\section{Introduction}

A graph $H$ is an \defn{(induced) minor} of a graph $G$ if it can be obtained from an (induced) subgraph of $G$ by contracting edges.\footnote{All graphs in this paper are finite and simple. See \cref{SectionPrelim} for undefined terms and notation.}

A \defn{tree-decomposition} of a graph $G$ is a collection $(B_x\subseteq V(G): x \in V(T))$ of \emph{bags} indexed by a tree $T$ such that (i) for each $v \in V(G)$, $T[\{x \in V(T): v \in B_x\}]$ is a non-empty subtree of $T$; and (ii) for each $uv \in E(G)$, there is a node $x \in V(T)$ such that $u,v \in B_x$. A \defn{path-decomposition} is a tree-decomposition indexed by a path. The \defn{width} of a tree-decomposition is $\max\{|B_x|-1\colon x \in V(T)\}$. The \defn{treewidth} $\tw(G)$ (\defn{pathwidth} $\pw(G)$) of a graph $G$ is the minimum width of a tree-decomposition (path-decomposition) of $G$. Treewidth and pathwidth are important parameters in algorithmic and structural graph theory.

The \emph{Graph Minors} series of \citet{GraphMinors} established a rich theory of the structure of graphs defined by an excluded minor. Among the central results are the Forest Minor Theorem~\cite{robertson1983graph}, which states that graphs that exclude a forest as a minor have bounded pathwidth, and the Grid Minor Theorem~\cite{robertson1986planar}, which states that graphs that exclude a planar graph as a minor have bounded treewidth. These theorems characterise when a graph class has bounded pathwidth or bounded treewidth, respectively.

Inspired by the success of Robertson and Seymour's graph minor theory, a recent line of work aims to establish an analogous theory for induced minors. Currently, far
less is known about classes excluding an induced minor compared to those excluding a minor. Induced-minor-closed classes may be dense and have proven to have a far more complex structure than minor-closed classes~\cite{BH2025rigs}. Nevertheless, in the bounded-degree setting, \citet{korhonen2022} established the following Induced Grid Minor Theorem.

\begin{thm}[\cite{korhonen2022}]\label{KorhonenInducedGrid}
    For every planar graph $H$ and $\Delta\in \NN$, there exists $k\in \NN$ such that every graph $G$ with maximum degree at most $\Delta$ and no $H$ induced minor has $\tw(G)\leq k$.
\end{thm}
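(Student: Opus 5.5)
This is Korhonen's theorem, quoted from \cite{korhonen2022}. A self-contained route combines the Grid Minor Theorem~\cite{robertson1986planar} with a bounded-degree cleaning argument. The plan has three steps.

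\textbf{Step 1: reduce to a fixed target.} Since $H$ is planar, $H$ is an induced minor of some sufficiently large grid-like graph with all degrees at most $3$. A convenient choice is the line graph of a subdivided wall, or a suitable \emph{triangular grid} $\Gamma_m$. It therefore suffices to show that every graph $G$ with maximum degree at most $\Delta$ and huge treewidth contains $\Gamma_m$ as an induced minor.

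\textbf{Step 2: find a large model and sparsify it.} By the Grid Minor Theorem, huge treewidth gives a minor model of a huge wall $W$ in $G$. Since $W$ has maximum degree $3$, we may replace each branch set by a tree with at most three leaves. This realises a large subdivided wall $W'$ as a subgraph of $G$, and the aim is to clean $W'$ into an \emph{induced} structure.

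The obstruction is chords: edges of $G$ between vertices of $W'$ that are not edges of $W'$. Because $\Delta(G)\le\Delta$, every vertex carries at most $\Delta$ chords. We then pass to a sub-wall whose bricks are pairwise far apart in $W'$. Take every $r$-th row and column, with $r$ depending on $\Delta$ and $m$, and apply Ramsey-type or counting arguments: each brick meets only boundedly many chords locally, while a brick can interact with many far-away bricks. The plan is to classify chords as \emph{short} (both ends in nearby bricks) or \emph{long}.

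\begin{itemize}
\item \textbf{Short chords.} Contract each small cluster of bricks to a branch set. Short chords then become edges inside a branch set or between adjacent branch sets. Adjacent branch sets are adjacent in the target anyway, which is why a triangulated-grid target (every pair of neighbouring cells adjacent) is convenient.
\item \textbf{Long chords.} Each branch set has bounded size, since $r$ is fixed, and degree is bounded, so each branch set has boundedly many long chords. We then choose a sub-grid of branch sets avoiding all long chords between chosen sets. This is an independent-set-in-a-bounded-degree-conflict-graph argument on the super-grid: the conflict graph has bounded degree, so a greedy or probabilistic selection keeps a constant fraction of the structure.
\end{itemize}

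After this we re-route the wall through the surviving sets.

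\textbf{Main obstacle.} The hard step is the long chords. Deleting conflicting clusters destroys grid connectivity, and keeping grid connectivity requires the connecting paths themselves to be chord-free. First I would try passing to the wall minor of the surviving structure via a second application of the Grid Minor Theorem: the surviving clusters still form a graph of large treewidth, since bounded-degree deletion of a sparse set preserves large treewidth up to constants. I would then iterate the cleaning with a potential argument on $\Delta$. Korhonen's actual proof instead uses a delicate treewidth-preserving induced-subgraph argument rather than naive deletion, so I expect this step to need that refinement.
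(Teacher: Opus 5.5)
The paper gives no proof of this statement: it is quoted from \cite{korhonen2022} and used as a black box. Your sketch therefore has to stand on its own, and it does not. The step that carries the whole difficulty is left open, and the claims meant to handle it are false as stated.

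Your long-chord step assumes that a cluster of $r\times r$ bricks has bounded size ``since $r$ is fixed''. But the subdivided wall $W'$ produced by the Grid Minor Theorem can have arbitrarily long subdivision paths. For example, if $G$ is a wall with every edge subdivided many times, every large wall subgraph of $G$ has long paths. Choosing a minimal model does not change this. So a cluster can contain unboundedly many vertices and carry up to $\Delta$ times as many long chords. Bounded degree of $G$ bounds chords per \emph{vertex}, which is useless without control on the sizes of the branch sets. You therefore have no bound on the degree of your conflict graph.

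Even if the conflict graph had bounded degree, an independent set of clusters is not a sub-grid. The surviving clusters must be reconnected, and any reconnecting paths run through discarded territory whose vertices carry exactly the chords you were trying to avoid. Your fallback is circular: re-applying the Grid Minor Theorem yields a new model with new chords, and no parameter is shown to decrease (deleting clusters need not lower the maximum degree). The principle that deleting a sparse set in a bounded-degree graph preserves treewidth up to constants is also false. Deleting every $r$-th row and column of a large grid removes only about a $2/r$ fraction of the vertices, yet leaves components of treewidth less than $r$.

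Turning a large grid minor in a bounded-degree graph into a large grid \emph{induced} minor is precisely the content of Korhonen's theorem. The paper attributes his proof to a contraction--uncontraction technique, and your proposal offers no substitute for it.

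A minor point: a triangular grid is not subcubic. Also, one diagonal per cell does not absorb short chords between the other pair of diagonal neighbours, so you would need a king's-graph-type target. That is harmless, since such a graph still contains large grids as induced minors.
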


Since treewidth is a sparse parameter (i.e. bounded treewidth graphs have bounded average degree), an additional sparsity constraint such as bounded-degree is necessary to ensure bounded treewidth. However, such sparsity constraints diminish the appeal of induced minors which is their suitability to the study of dense graphs. This motivates the search for an induced minor analog of treewidth that can be small on dense graphs while having similar structural and algorithmic properties to treewidth.

In recent years, a growing body of evidence suggests that tree independence number is the right parameter. The \defn{tree independence number} $\atw(G)$ of a graph $G$ is the minimum integer $k$ such that $G$ admits a tree-decomposition where each bag has independence number at most $k$. Introduced independently by \citet{Yolov2018} and \citet{DALLARD2024indep}, this parameter is closed under taking induced minors \cite{DALLARD2024indep}, exhibits rich structural and algorithmic properties \cite{LMMORS2024matching}, while presenting applications in coarse graph theory \cite{hickingbotham2025quasi,NSS2025asymptoticI}. It is easy to see that complete graphs have tree-independence number $1$, so unlike treewidth, this parameter is suitable for the study of dense graphs.

One of the key open problems concerning tree independence number is whether it admits a grid-like theorem. In this direction, \citet{DKKMSW2024TI4} conjectured the following induced grid minor theorem for graphs with \emph{bounded local independence number}; that is, graphs where the neighbourhood of every vertex has bounded independence number. For graphs $H$ and $G$, we say that $G$ is \defn{$H$-free} if it excludes $H$ as an induced subgraph.

\begin{conj}[\cite{DKKMSW2024TI4}]\label{ConjTreeAlpha}
    For every planar graph $H$ and $t\in \NN$, there exists $k\in \NN$ such that every $K_{1,t}$-free graph $G$ with no $H$ induced minor has $\atw(G)\leq k$.
\end{conj}

One can readily observe that $K_{1,t}$-freeness is equivalent to the neighbourhood of every vertex having independence number less than $t$. Similarly, having maximum degree at most $\Delta$ is equivalent to excluding $K_{1,\Delta+1}$ as a subgraph. Thus \cref{ConjTreeAlpha} would generalise \cref{KorhonenInducedGrid} from treewidth to tree independence number. \cref{ConjTreeAlpha} has been verified for when $H$ is a disjoint union of triangles \cite{AGHK2025EP}, a wheel \cite{CHMW2025wheel}, an induced subgraph of the $1$-subdivision of a ladder \cite{CW2025Ladder}, or an outerstring graph~\cite{BCKV26}.  
For an arbitrary planar graph $H$,  a weakening of \cref{ConjTreeAlpha} with a polylogarithmic bound on the tree independence number is known to hold~\cite{chudnovsky2025treeindependencenumberviiexcluding}.

Our main result resolves \cref{ConjTreeAlpha} when $H$ is a forest in a qualitatively stronger sense. The \defn{path independence number} $\apw(G)$ of a graph $G$ is the minimum integer $k$ such that $G$ admits a path-decomposition where every bag has independence number at most $k$. Like tree independence number, path independence number is closed under taking induced minor. This parameter was first considered in~\cite{BCKMMS2024simultaneous}.

\begin{restatable}{thm}{MainThmForest}\label{MainThmForest}
	For every forest $H$ and $t\in \NN$, there exists $k\in \NN$ such that every $K_{1,t}$-free graph $G$ with no $H$ induced minor has $\apw(G)\leq k$.
\end{restatable}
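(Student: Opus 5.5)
We first reduce to the case where $H$ is a complete ternary (or binary) tree of some height $h$. Every forest is an induced minor of a sufficiently large tree: add a new vertex adjacent to one vertex of each component. Every tree is an induced minor of a large enough complete binary tree, by contracting paths and deleting subtrees. So it suffices to show that $K_{1,t}$-free graphs with no induced minor isomorphic to the complete binary tree $T_h$ of height $h$ have bounded $\apw$.

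The natural strategy mirrors the proof of the Forest Minor Theorem via the \emph{layering / separator} approach, combined with the standard trick for $K_{1,t}$-free graphs. In a $K_{1,t}$-free graph, a vertex set of independence number at most $c$ has a closed neighbourhood of independence number at most $c(t-1)+c$. Hence replacing balls of bounded radius by bags only multiplies $\alpha$ by a bounded factor.

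The proof has three steps.

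\textbf{Step 1: reduce to a sparse structure.} Take a maximal independent set $I$ of $G$ and build an auxiliary graph $G'$ on $I$, joining two vertices when they are at distance at most $3$ in $G$. Since $G$ is $K_{1,t}$-free, every vertex has at most $t-1$ neighbours in $I$. Thus $G'$ has bounded maximum degree $\Delta = \Delta(t)$, and every vertex of $G$ lies within distance $1$ of $I$.

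\textbf{Step 2: transfer the excluded minor.} Show that a large complete binary tree as an \emph{induced} minor in $G'$ (or perhaps just a large subdivided binary tree as an induced subgraph of a suitable subdivision) yields $T_h$ as an induced minor of $G$. The natural construction is to realise each vertex or edge of $G'$ by short paths in $G$. Bounded degree lets us thin out the tree by taking sufficiently far-apart branch sets, so that the branch sets in $G$ become non-adjacent where required.

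\textbf{Step 3: conclude.} By the bounded-degree forest case, which is the path analogue of \cref{KorhonenInducedGrid}, $G'$ has bounded pathwidth. For bounded-degree graphs, excluding an induced forest minor bounds pathwidth: a large pathwidth forces a large complete binary tree minor by the Forest Minor Theorem, and bounded degree converts a minor of a subcubic tree into an induced minor after spacing out the branch sets. Finally, take a path-decomposition of $G'$ of width $w$ and replace each vertex $x\in I$ by its closed neighbourhood $N_G[x]$. This gives a path-decomposition of $G$: every edge $uv$ of $G$ has $u\in N[x]$ and $v\in N[y]$ with $\dist_G(x,y)\le 3$, so $xy\in E(G')$ and $x,y$ share a bag. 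Each bag is a union of at most $w+1$ closed neighbourhoods of vertices of $I$, and each such neighbourhood has independence number at most $t$ (it is $K_{1,t}$-freeness applied at $x$, plus $x$ itself). So each bag has $\alpha\le (w+1)t$.

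\textbf{Main obstacle.} The hard part is Step 2: lifting an induced tree minor of $G'$ back to an induced minor of $G$. Paths in $G$ realising $G'$-edges can create unwanted adjacencies through the dense parts of $G$. To handle this, I would first use the Forest Minor Theorem in $G'$ to get a minor of a huge complete binary tree. I would then pick a subtree whose branch vertices are pairwise at $G'$-distance much larger than $3$, for example by passing to a subdivision and keeping only every tenth level. Then I would route the connecting paths in $G$ through vertices whose distance-$2$ neighbourhoods are disjoint, using bounded degree of $G'$ to control conflicts. With enough spacing, the only adjacencies between the resulting branch sets in $G$ are the intended tree edges, so the resulting tree is an induced minor of $G$. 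Bounding how much spacing is needed, in terms of $\Delta$, is where the quantitative work lies.
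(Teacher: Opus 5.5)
Your reduction to complete binary trees is fine, and so is the neighbourhood-replacement at the end of Step 3. The proposal fails at Step 1, and the failure is fatal rather than technical.

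Knowing that each vertex has at most $t-1$ neighbours in $I$ does not bound the degree of $x\in I$ in $G'$. The $G'$-neighbours of $x$ are reached through $N_G(x)$, which in a $K_{1,t}$-free graph can be arbitrarily large, even a clique. Concretely, let $G_n$ be a clique on $\{x,u_1,\dots,u_n\}$ together with pendant vertices $y_1,\dots,y_n$, where $y_i$ is adjacent only to $u_i$.
\begin{itemize}
\item $G_n$ is $K_{1,3}$-free.
\item $\apw(G_n)\le 2$: take the bags $\{x,u_1,\dots,u_n,y_i\}$ for $i\in[n]$.
\item $G_n$ has no $P_5$ induced minor. A branch set missing the clique is a single $y_i$, which touches only one other branch set. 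So the three middle branch sets of a model of $P_5$ would all meet the clique and hence be pairwise adjacent, which is impossible.
\end{itemize}
Yet every dominating set of $G_n$, in particular every maximal independent set, has at least $n$ vertices, since it must hit each pair $\{u_i,y_i\}$. Since $G_n$ has diameter $3$, your $G'$ is a complete graph on at least $n$ vertices. It therefore has unbounded degree and unbounded pathwidth. Any path-decomposition of $G'$ has a bag containing all of $I$, so your bags have independence number at least $n$.

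No choice of $I$, and no bounded-degree theorem in Step 3, can repair this. The lifting problem in Step 2, which you flag as the main obstacle, is not where the difficulty lies. $K_{1,t}$-freeness bounds the independence number of neighbourhoods, not the growth of balls. So a $K_{1,t}$-free graph cannot in general be coarsened into a bounded-degree graph that captures its path structure.

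The paper never coarsens the whole graph; it uses bounded degree only on a witnessing induced subgraph. The core of the argument is that $K_{1,t}$-free graphs excluding an apex-forest induced minor are $\alpha$-Menger bounded (\cref{AlphaMengerForestMinor}). This is proved by extracting anti-complete paths one at a time via \cref{TechnicalInducedMenger}, and forcing a $K_{c,c}$ induced minor when extraction fails. Combined with strong brambles, this gives an induced grid-like minor whenever $\atw$ is large. The union of its induced paths is an induced subgraph of maximum degree at most $2t$ and large treewidth (\cref{GridLikeMinorDegree}), to which \cref{KorhonenInducedGrid} applies. This yields bounded tree independence number.

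A separate argument then converts the tree-decomposition into a path-decomposition. It proceeds by induction on the height of $T_{h,d}$, using the anti-complete Erd\H{o}s--P\'osa property for graphs of bounded $\atw$ (\cref{HittingSet}). Your proposal has no substitute for either of these two ingredients.
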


\cref{MainThmForest} demonstrates that path independence number is a natural induced-minor analog of pathwidth. Since the class of forests is triangle-free with unbounded pathwidth, Ramsey's Theorem~\cite{ramsey1930problem} implies that it has unbounded path independence number. Thus, we have the following consequence of \cref{MainThmForest}.

\begin{thm}
    For every $t\in \NN$, for every class $\GG$ of $K_{1,t}$-free graphs, $\GG$ has bounded path independence number if and only if there is a forest $F$ such that every graph in $\GG$ is $F$-induced-minor-free.
\end{thm}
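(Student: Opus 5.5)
The plan is to prove the two directions separately. The backward direction is exactly \cref{MainThmForest}. The forward direction needs no $K_{1,t}$-freeness: it uses only that path independence number is monotone under induced minors and that forests have unbounded pathwidth.

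\emph{($\Leftarrow$)} Suppose there is a forest $F$ such that every graph in $\GG$ is $F$-induced-minor-free. Every graph in $\GG$ is also $K_{1,t}$-free, so \cref{MainThmForest}, applied with $H=F$ and this $t$, gives a $k$ with $\apw(G)\leq k$ for all $G\in\GG$. Hence $\GG$ has bounded path independence number.

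\emph{($\Rightarrow$)} Suppose $\apw(G)\leq k$ for every $G\in\GG$.

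\begin{enumerate}
\item \emph{Monotonicity.} Path independence number is closed under induced minors, as noted before \cref{MainThmForest}. So if $F$ is an induced minor of some $G\in\GG$, then $\apw(F)\leq k$.

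\item \emph{Forests with small $\apw$ have small pathwidth.} Let $F$ be a forest with $\apw(F)\leq k$, and take a path-decomposition of $F$ in which every bag has independence number at most $k$. Each bag induces a triangle-free graph with no independent set of size $k+1$. By Ramsey's Theorem each bag therefore has fewer than $R(3,k+1)$ vertices. Consequently $\pw(F)< R(3,k+1)$.

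\item \emph{Choosing the excluded forest.} Take $F$ to be a tree of pathwidth at least $R(3,k+1)$. For instance, the complete ternary tree of height $h$ has pathwidth at least $\lfloor h/2\rfloor$, by the standard argument that in any path-decomposition one of the three branches at the root must be entirely separated from the other two by a bag. Alternatively, one can invoke the Forest Minor Theorem~\cite{robertson1983graph}, which implies that forests have unbounded pathwidth. By the previous step $\apw(F)>k$, so by monotonicity $F$ is not an induced minor of any graph in $\GG$.
\end{enumerate}

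The only point needing care is the lower bound on the pathwidth of a suitable tree. This is classical, so no step here is a genuine obstacle. All the difficulty of the characterisation lies in \cref{MainThmForest} itself.
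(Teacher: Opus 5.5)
Your proposal is correct and matches the paper's argument: the backward direction is exactly \cref{MainThmForest}, and the forward direction combines closure of path independence number under induced minors with the fact that forests are triangle-free and have unbounded pathwidth, so Ramsey's Theorem forces them to have unbounded path independence number. Two cosmetic points: with the paper's convention for $R$ the bag bound should read $R(k+1,3)$, and the Forest Minor Theorem does not by itself say that forests have unbounded pathwidth (that is the converse direction), but your ternary-tree argument already covers this.
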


A graph $H$ is an \defn{apex-forest} if there is a vertex $v\in V(H)$ such that $H-v$ is a forest.  Our second contribution is the following.

\begin{thm}\label{MainThmApexForest}
    For every apex-forest $H$ and $t\in \NN$, there exists $k\in \NN$ such that every $K_{1,t}$-free graph $G$ with no $H$ induced minor has $\atw(G)\leq k$.
\end{thm}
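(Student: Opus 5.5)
We deduce \cref{MainThmApexForest} from \cref{MainThmForest} through a local-to-global argument. Write $H$ as $F$ plus an apex vertex $a$ adjacent to some set $S\subseteq V(F)$, where $F=H-a$ is a forest. We may assume $F$ is a tree: otherwise add a new vertex of $F$, non-adjacent to $a$, joined to one vertex of each component. This produces a larger apex-forest containing $H$ as an induced minor, so it suffices to exclude it. Since $G$ is $K_{1,t}$-free, every neighbourhood $N(v)$ has independence number less than $t$.

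\textbf{Step 1 (local structure).} For a vertex $v$, consider the second neighbourhood $N^2(v)=N(N(v))\setminus N[v]$, or more generally the layers $L_i$ of a BFS from $v$. We claim each layer $G[L_i]$ has no $F'$ induced minor, for a suitable forest $F'$ depending only on $H$. Suppose $G[L_i]$ contains a model of a large forest. Contract the ball of radius $i-1$ (which is connected) into a single vertex; this gives the apex. Since every vertex of $L_i$ has a neighbour in $L_{i-1}$, the apex is adjacent to every branch set. To obtain the \emph{prescribed} adjacency set $S$, we shrink the ball to an appropriate connected subset instead. Alternatively, we subdivide $F$ and use that each branch set touching the apex can be rerouted by the apex's neighbours. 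This step needs care, and it is where $K_{1,t}$-freeness is used again: a vertex of $L_{i-1}$ sees fewer than $t$ pairwise non-adjacent branch sets. \cref{MainThmForest} then gives each layer $L_i$ path independence number at most some $k_1$.

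\textbf{Step 2 (gluing layers).} A standard layering argument turns a BFS layering in which every layer, or every union of a bounded number of consecutive layers, has bounded $\apw$ into a bound on $\atw$. This works provided the interaction between consecutive layers is controlled. Here we would instead use the known balanced-separator characterisation: tree independence number is bounded (up to a function) if every weighted subgraph has a balanced separator that is dominated by a bounded number of vertices, or has bounded independence number. Given $G$, we would find such a separator by choosing a suitable vertex $v$ and layer $L_i$. We would then use the path decomposition of $G[L_i\cup L_{i+1}]$ to pick a bag, together with a bounded number of neighbourhoods (each of independence number less than $t$), that separates in a balanced way.

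\textbf{Main obstacle.} The main obstacle is Step 2. Layers with bounded $\apw$ do not immediately give global bounded $\atw$, and this fails for dense graphs in general. What I would try first is a BFS-layering variant of the Dallard et al.\ separator lemma. Take a central vertex $v$ and let $i$ be the median layer with respect to the weights. Then use the path decomposition of $L_i$ to cut $L_i$ into pieces of bounded independence number. The separator consists of $L_i$ restricted appropriately plus neighbourhoods of a few vertices. If this fails, the fallback is to strengthen Step 1 so that unions of two consecutive layers exclude a forest induced minor, which makes the gluing more direct.
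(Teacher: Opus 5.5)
\textbf{There is a genuine gap, and the reduction is circular within this paper.} It runs in the opposite direction to the paper: the paper derives \cref{MainThmForest} \emph{from} \cref{MainThmApexForest}. The lemma in \cref{ForestPathAlpha} that produces path-decompositions takes $\atw(G)<k$ as a hypothesis, because it needs a tree-decomposition to run the Erd\H{o}s--P\'osa argument of \cref{HittingSet}. That bound is supplied by \cref{MainThmApexForest} applied with $H=T_{h,d}$. So \cref{MainThmForest} is not available to you as a black box. Any independent proof of it would already contain the forest case of the statement, since $\atw\leq\apw$.

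Step 1 is also not established. Contracting the ball of radius $i-1$ gives an apex adjacent to \emph{every} branch set. Every induced minor of a cone over a forest is either a forest or again a cone over a forest, so in general you do not obtain $H$ with its prescribed set $S$. A repair would take the ball of radius $i-2$ together with chosen vertices of $L_{i-1}$ that see only the branch sets indexed by $S$. That requires a selection lemma you have not given.

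You correctly flag Step 2 as the main obstacle, but it cannot be resolved by any argument that only uses information about the layers. Take the $n\times n$ grid. It is $K_{1,5}$-free. Each BFS layer from a corner is an independent set, and the union of any two consecutive layers induces a path, so all of these have path independence number $1$. Yet the grid is bipartite with treewidth $n$, so $\atw\geq (n+1)/2$. Hence bounds on $\apw$ of layers, or of pairs of consecutive layers, give no bound on $\atw$. The exclusion of $H$ must be used globally to produce separators of small independence number, and your sketch has no mechanism for this. In particular, a bag of a path-decomposition of $L_i$ plus a few neighbourhoods does not separate $G$: the pieces of $L_i$ reconnect through the inner ball and the outer layers.

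The paper supplies exactly this global ingredient: $\alpha$-Menger boundedness (\cref{AlphaMengerForestMinor}). It says that if $N_G[P_1]$ and $N_G[P_2]$ cannot be separated by a set of small independence number, then there are many pairwise anti-complete paths between them. This is proved by growing a model of the tree $H-z$ whose leaf branch sets keep large $\asep$ to $N_G[B]$. Then $B$ together with geodesic connecting paths serves as the apex branch set, giving $H$ and hence a contradiction. Combined with strong brambles (\cref{AntiPathsBramble}), this yields an induced grid-like minor. That in turn gives a bounded-degree induced subgraph of large treewidth, contradicting \cref{KorhonenInducedGrid}.
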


Before discussing the applications of our main results, we briefly survey the literature on `Induced Forest Minor Theorems'. Let $F$ be a forest. Using \citet{korhonen2022} contraction--uncontraction technique, \citet{Hickingbotham22} showed that bounded-degree graphs with no $F$ induced minor have bounded pathwidth. \citet{CHS2024pathwidth} subsequently showed that $K_t$-free graphs excluding both $F$ and $K_{t,t}$ as induced minors have bounded pathwidth. This breakthrough result allows them to fully classify the unavoidable induced subgraphs of graphs with large pathwidth. \citet{NSS2025fat} showed that graphs with no $F$ induced minor have a path decomposition where each bag consists of a bounded number of balls of bounded radius (where the number and size of the balls depend on $F$).\footnote{\citet{NSS2025fat} proved a more general result concerning fat minors which implies this statement for induced minors.} This implies that there exists a forest $F'$ such that $F$-induced-minor-free graphs are quasi-isometric to graphs with no $F'$ minor. Finally, \citet{BDH2025Forest} classified the forests $H$ for which every $K_{t,t}$-subgraph-free $H$-induced-minor-free graph has bounded pathwidth.

\subsubsection*{Applications}

The \textsc{$H$-Induced Minor Containment} problem asks, for a fixed graph $H$, whether a given graph $G$ contains $H$ as an induced minor. Unlike the \textsc{$H$-Minor Containment} problem which is solvable in polynomial-time, this problem can be NP-complete for a fixed graph $H$~ \cite{FKMP1995complexity}. In particular, it was shown by \citet{KL2024InducedMinor} that for some trees $H$, the \textsc{$H$-Induced Minor Containment} is \textsc{NP}-complete. See \cite{DDHP2025inducedminor} for a survey on this problem.

\citet{CHMW2025wheel} showed that for every planar graph $H$ and integer $t\in \NN$ that satisfies \cref{ConjTreeAlpha}, there is a polynomial-time algorithm that, given a $K_{1,t}$-free graph $G$ as input, either decides that $G$ does not contain $H$ as an induced minor or finds an induced minor model of $H$.\footnote{\citet{CHMW2025wheel} only established this result for when $H$ is a wheel. It is easy to see that their proof generalises to any graph $H$ that satisfies \cref{ConjTreeAlpha}.} Thus, \cref{MainThmApexForest} implies the following:

\begin{thm}
    For every apex-forest $H$ and $t\in \NN$, there exists a polynomial-time algorithm that, given a $K_{1,t}$-free graph $G$, either decides that $G$ does not contain $H$ as an induced minor or outputs an induced minor model of $H$ in $G$.
\end{thm}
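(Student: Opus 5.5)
The plan is to derive this from \cref{MainThmApexForest} combined with the algorithmic framework of \citet{CHMW2025wheel}, as the preceding paragraph indicates. Fix an apex-forest $H$ and $t\in\NN$, and let $k$ be the constant given by \cref{MainThmApexForest}. For a $K_{1,t}$-free input graph $G$ there are two cases. If $G$ has no $H$ induced minor, then $\atw(G)\leq k$. So whenever $\atw(G)>k$, we may immediately answer that $G$ contains $H$ as an induced minor. To also output a model, we will recurse or self-reduce, as described in the last paragraph.

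\textbf{Step 1: certify bounded tree independence number.} We use the known algorithm (Dallard et al.) that, given $G$ and $k$, in time $n^{O(k)}$ either outputs a tree-decomposition of independence number $O(k)$ or certifies that $\atw(G)>k$. In the second case, \cref{MainThmApexForest} tells us that $H$ is an induced minor of $G$. In the first case we obtain a tree-decomposition $\mathcal{T}$ with $\alpha$-width $k'=O(k)$.

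\textbf{Step 2: decide containment on $\mathcal{T}$.} Here we invoke the meta-theorem for bounded tree independence number (Lima, Milanič, Muršič, Okrasa, Rzążewski, Štorgel, \cite{LMMORS2024matching}). It states that finding a maximum-weight induced subgraph with bounded clique number satisfying a fixed CMSO$_2$ property is polynomial given such a decomposition. To fit containment into this, take an induced minor model of $H$ in $G$ and shrink each branch set to a minimal connected set that still realises the required adjacencies. Each branch set then becomes a tree with few leaves, namely at most $|V(H)|$ ``terminal'' neighbours. The union of these branch sets induces a subgraph whose clique number is bounded in terms of $t$ and $|H|$: $K_{1,t}$-freeness bounds the independent neighbours, and the minimality bounds the rest. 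So existence of an $H$ induced minor is equivalent to existence of an induced subgraph of bounded clique number satisfying the CMSO$_2$ sentence ``contains $H$ as an induced minor'', and the meta-theorem decides this in polynomial time.

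\textbf{Step 3: output a model.} Output the model via self-reducibility. In the $\atw(G)>k$ case, repeatedly delete a vertex or contract an edge while the smaller graph still has $\atw>k$ or still contains $H$ as an induced minor (checked by Steps 1–2). The class of $K_{1,t}$-free graphs is not closed under contraction, so we only delete vertices. We delete while the answer stays ``yes''; since vertex deletion preserves $K_{1,t}$-freeness, this yields a minimal induced subgraph $G'$ containing $H$. In $G'$ the decomposition from Step 1 exists, because minimality together with the Step 2 argument bounds the structure, and the model can be read off from the dynamic programme.

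The main obstacle is the bounded-clique-number reduction in Step 2: showing that a minimal model uses an induced subgraph of clique number bounded by a function of $t$ and $H$. This is where $K_{1,t}$-freeness is essential. My first attempt would bound the number of degree-$\geq 3$ vertices in each minimal branch tree. Then, using Ramsey's Theorem, I would argue that a large clique inside the union of paths must contain many path vertices from a single branch set. Those vertices would yield a shortcut that contradicts minimality.
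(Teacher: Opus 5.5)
Your top-level route matches the paper's. The paper obtains this statement in one line by plugging \cref{MainThmApexForest} into the algorithm of \citet{CHMW2025wheel}, which works for any $H$ satisfying \cref{ConjTreeAlpha}, and your Steps 1--3 reconstruct that black box. Steps 1 and 2 give a correct polynomial-time decision procedure. However, the reconstruction has a gap at the decisive point, Step 3.

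\textbf{The gap in Step 3.} After the vertex-deletion self-reduction, you claim that the minimal graph $G'$ admits the Step 1 decomposition ``because minimality together with the Step 2 argument bounds the structure''. The Step 2 argument only bounds the clique number of $G'$. Combined with $K_{1,t}$-freeness, that gives bounded maximum degree, which does not bound $\atw$ (grids show this). The missing observation is short. By minimality, for every $v$ the graph $G'-v$ is $K_{1,t}$-free with no $H$ induced minor, so $\atw(G'-v)\le k$ by \cref{MainThmApexForest}. Adding $v$ to every bag then gives $\atw(G')\le k+1$.

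Consequently, you must run the approximation algorithm on $G'$ with parameter $k+1$. With parameter $k$ it may legitimately report $\atw(G')>k$ again, and Step 3 stalls without producing a model. Once you have the decomposition, $G'$ has bounded $\atw$ and bounded clique number, hence bounded treewidth by Ramsey. The branch sets can then be extracted with a Courcelle-type witness-finding algorithm. Note that the meta-theorem of \cite{LMMORS2024matching} returns only a vertex set, and by minimality that set is all of $V(G')$, so it does not give you the model by itself.

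\textbf{The Step 2 obstacle is misplaced.} The premise you plan to use is false: a minimal connected set containing given terminals need not induce a tree. A clique with a pendant terminal at each of its vertices is such a set, and it is even claw-free. The clique bound you need is nevertheless easy and does not use $K_{1,t}$-freeness.
\begin{itemize}
\item Take a model minimising $|V(\mathcal{W})|$. For each $v$, pick $J_v\subseteq W_v$ with $|J_v|\le\deg_H(v)$ such that $J_v$ touches every $W_u$ with $uv\in E(H)$. Then $W_v$ is a minimal connected set containing $J_v$.
\item Let $C\subseteq W_v$ be a clique with $|C|\ge 2$. Each $x\in C\setminus J_v$ is a cut vertex of $G[W_v]$, and every component of $W_v-x$ contains a vertex of $J_v$.
\item For each such $x$, choose a component $D_x$ of $W_v-x$ missing $C-x$. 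These components are pairwise disjoint, and each contains a vertex of $J_v\setminus C$, so $|C|\le|J_v|$.
\item A clique of $G[V(\mathcal{W})]$ meets at most $\omega(H)$ branch sets. Hence $\omega(G[V(\mathcal{W})])\le\omega(H)\max\{1,\Delta(H)\}$.
\end{itemize}
This is the same argument the paper uses for its clique bound in \cref{ForestPathAlpha}. With these two repairs your argument is complete.
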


\citet{Yolov2018} and \citet{DALLARD2024indep} independently showed that, for every fixed $k\in \NN$, there exists a polynomial-time algorithm for solving the \textsc{Maximum Weight Independent Set} problem on any graph $G$ with $\atw(G)\leq k$. Note that the relevant decomposition does not need to be given as input since it can be approximated efficiently \cite{Yolov2018,DFKM2024aprox}. Combining this with \cref{MainThmApexForest} yields the following.

\begin{thm}
    For every apex-forest $H$ and integer $t\in \NN$, there exists a polynomial-time algorithm for solving \textsc{Maximum Weight Independent Set} on any $K_{1,t}$-free graph $G$ that does not contain $H$ as an induced minor.
\end{thm}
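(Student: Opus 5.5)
The statement is a direct combination of two results, so the plan is a short reduction with no new combinatorics.

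\emph{Step 1: bounded tree independence number.} Fix an apex-forest $H$ and $t\in\NN$. By \cref{MainThmApexForest}, there is a constant $k=k(H,t)$ such that every $K_{1,t}$-free graph $G$ with no $H$ induced minor satisfies $\atw(G)\leq k$.

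\emph{Step 2: solve on bounded tree independence number.} For fixed $k$, the algorithm of \citet{Yolov2018} or \citet{DALLARD2024indep} solves \textsc{Maximum Weight Independent Set} in polynomial time on graphs with $\atw(G)\leq k$. The only subtlety is that these algorithms take a tree-decomposition as input. The algorithm does not know $G$ satisfies the hypotheses, so it cannot be handed a decomposition; instead it computes one.
\begin{itemize}
\item Run the approximation algorithm of \cite{Yolov2018,DFKM2024aprox} with parameter $k$.
\item In polynomial time it either outputs a tree-decomposition whose bags have independence number at most $f(k)$ for some function $f$, or certifies $\atw(G)>k$.
\item Under our hypotheses the second outcome cannot occur. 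So we always obtain a decomposition with bounded independence number per bag.
\end{itemize}
Running the dynamic programming with parameter $f(k)$ then gives polynomial running time $n^{O(f(k))}$. The degree of the polynomial depends only on $H$ and $t$.

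\emph{Well-definedness on arbitrary input.} Because the algorithm is only required to be correct on graphs satisfying the hypotheses, we make no correctness claim if the approximation algorithm reports $\atw(G)>k$. In that case we may simply halt, as the input is outside the promise. Alternatively, one can first test $H$-induced-minor containment using the preceding theorem, which also runs in polynomial time.

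\emph{Main obstacle.} All the combinatorial difficulty lies in \cref{MainThmApexForest}, which we assume. The only point needing care here is that the decomposition must be computed rather than given. This is handled by the approximation algorithm, whose loss from $k$ to $f(k)$ is harmless for polynomiality.
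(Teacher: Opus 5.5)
Your proposal is correct and matches the paper's argument: the paper combines \cref{MainThmApexForest} with the polynomial-time algorithm for \textsc{Maximum Weight Independent Set} on graphs of bounded tree independence number. Like you, it notes that the tree-decomposition need not be given as input, because it can be approximated efficiently.
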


See \cite{LMMORS2024matching} for other \textsc{NP}-hard problems that are polynomial time solvable on graphs with bounded tree independence number.  

\subsubsection*{Independence Erd\H{o}s--P\'osa}

The classical Erd{\H{o}}s-P{\'o}sa theorem~\cite{ErdosPosa1965} asserts that for every integer $k$, every graph $G$ contains $k$ vertex disjoint cycles, or there exists a set $X$ of vertices with $|X|\in \mathcal{O}(k\log(k))$ that intersects every cycle in $G$. This seminal result initiated a long line of research into Erd{\H{o}}s-P{\'o}sa-type theorems, where the aim is to show that a graph contains many vertex-disjoint subgraphs from a specified class, or a small hitting set intersecting all such subgraphs; see \cite{RT17} for a survey.

In recent years, attention has turned to induced Erd{\H{o}}s-P{\'o}sa-type theorems, where `disjoint' is replaced by `anti-complete' \cite{AGHK2025EP,DJMM2024EP}. For a positive integer $d$ and graph $H$, let $dH$ denote the disjoint union of $d$ copies of $H$. \citet{CW2025Ladder} defined a graph $H$ to have the \defn{independence Erd\H{o}s--P\'osa property} in $K_{1,t}$-free graphs if there exists a function $f$ such that, for every $K_{1,t}$-free graph $G$ and positive integer $d$, $G$ contains $dH$ as an induced minor (i.e. $d$ pairwise anti-complete induced subgraphs each containing $H$ as an induced minor), or there exists a set $X\subseteq V(G)$ with $\alpha(X)\leq f(d,t)$ such that $G-X$ does not contain $H$ as an induced minor. They proved that any induced subgraph of the $1$-subdivision of a ladder has this property. In particular, their argument shows that if $dH$ satisfies \cref{ConjTreeAlpha} for all $d$, then $H$ has this property. We therefore obtain the following consequence of \cref{MainThmForest}.

\begin{thm}
    Every forest has the independence Erd\H{o}s--P\'osa property in $K_{1,t}$-free graphs.
\end{thm}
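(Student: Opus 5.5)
The plan is to deduce this from \cref{MainThmForest} together with the argument of \citet{CW2025Ladder}, which the paper states shows that if $dH$ satisfies \cref{ConjTreeAlpha} for all $d$, then $H$ has the independence Erd\H{o}s--P\'osa property in $K_{1,t}$-free graphs. Let $F$ be a forest and $d$ a positive integer. Then $dF$ is again a forest, so by \cref{MainThmForest} there is $k=k(dF,t)$ such that every $K_{1,t}$-free graph with no $dF$ induced minor has $\apw(G)\le k$. Since a path-decomposition is a tree-decomposition, $\atw(G)\le \apw(G)\le k$. Hence $dF$ satisfies the conclusion of \cref{ConjTreeAlpha} for every $d$, and the implication of Chudnovsky--Wang gives the property.

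For completeness I would also sketch that implication in the present setting. Let $G$ be $K_{1,t}$-free with no $dF$ induced minor, and take a tree-decomposition $(B_x: x\in V(T))$ with $\alpha(G[B_x])\le k$. Root $T$ and repeat the standard Erd\H{o}s--P\'osa greedy step. Choose a deepest node $x$ such that the subgraph $G_x$ induced by the vertices appearing in bags of the subtree at $x$ contains $F$ as an induced minor. Delete $B_x$ and add it to the hitting set $X$.

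After this deletion, each child subtree graph is $F$-induced-minor-free. Any induced minor model of $F$ in the remainder therefore lives outside $G_x - B_x$. Moreover, the vertices of $G_x - B_x$ are anticomplete to the rest of $G - B_x$, because $B_x$ separates them. We then recurse on $G - V(G_x)$ with parameter $d-1$: a model of $(d-1)F$ there, together with the model of $F$ inside $G_x$, would give $dF$. One subtlety is that the model in $G_x$ may use vertices of $B_x$, which could be adjacent to the outside. To handle this, I would instead take the model of $F$ in $G_x$ and, for the recursion, remove $B_x$ together with its neighbourhood.

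This neighbourhood step is the main obstacle: removing $N[B_x]$ must keep the independence number of the removed set bounded. Here $K_{1,t}$-freeness is used. Each vertex's neighbourhood has $\alpha<t$, so $\alpha(N[B_x])\le k\cdot t$ roughly, since $N[B_x]$ is covered by the closed neighbourhoods of a maximal independent set of $B_x$, which has at most $k$ elements, each contributing $\alpha \le t$. With this bound, the $d$ rounds give $\alpha(X)\le d\,k(dF,t)\,t=:f(d,t)$, proving the theorem.
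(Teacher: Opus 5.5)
Your first paragraph is exactly the paper's deduction, and it is correct. The paper only notes that $dF$ is again a forest, so \cref{MainThmForest} bounds the path (hence tree) independence number of $K_{1,t}$-free graphs with no $dF$ induced minor, and then appeals to the argument of \citet{CW2025Ladder}. The self-contained sketch you add ``for completeness'' does not work as written, for two reasons.

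\textbf{The bound $\alpha(N[B_x])\le kt$ is false.} A maximal independent set $I$ of $B_x$ only dominates $B_x$, i.e.\ $B_x\subseteq N[I]$. The set $N[B_x]$ is only contained in the ball of radius $2$ around $I$, and such balls can have unbounded independence number in $K_{1,t}$-free graphs. For a concrete case, take a clique $v_1,\dots,v_m$ and attach a pendant vertex $u_i$ to each $v_i$.
\begin{itemize}
\item This graph is $K_{1,3}$-free and chordal, so it has a tree-decomposition into cliques with root bag $\{v_1,\dots,v_m\}$ and child bags $\{v_i,u_i\}$.
\item With $F=P_3$, your procedure selects this root bag as $B_x$.
\item Then $\alpha(N[B_x])=m$, although the graph has no $2P_3$ induced minor.
\end{itemize}
The repair is to delete two sets. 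First delete $N[I]$, which destroys $B_x$ at cost at most $kt$. Then delete only $N[V(\mathcal W)\cap B_x]$ for the chosen model $\mathcal W$ of $F$. The second deletion is affordable if $\mathcal W$ is taken vertex-minimal: its clique number is then bounded in terms of $F$, and Ramsey bounds $|V(\mathcal W)\cap B_x|$. This is precisely the paper's \cref{HittingSet}. Apply it to the family of vertex-minimal connected induced subgraphs containing $F$ as an induced minor, and combine it with $\alpha(N_G[X])\le t|X|$.

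\textbf{The localisation step needs $F$ to be connected.} You claim that a model of $F$ avoiding $B_x$ lies either inside a single child part or outside $G_x$. This fails for disconnected forests. For example, let $F=2K_1$ and let $G$ be three isolated vertices $a,b,c$, with root bag $\{a\}$ and child bags $\{b\}$ and $\{c\}$.
\begin{itemize}
\item Each child part is $F$-free.
\item Your output $N[B_x]=\{a\}$ still leaves $2K_1$ in $G-\{a\}$.
\item $G$ has no $4K_1$, so $d=2$ applies and the output is simply wrong.
\end{itemize}
So even after the repair, your sketch covers only trees. For general forests you still rest on the cited argument, as the paper does.
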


We conclude this section by mentioning the following very recent related result of \citet{chudnovsky2026forbiddinganticompleteplanarminors}: 
For every planar graph $H$ there exists a function $f$ such that for every positive integer $d$ and every graph $G$ not containing $d$ pairwise anti-complete  subgraphs each containing $H$ as a minor, there is $X\subseteq V(G)$ with $|X|\leq f(d)$ such that $G-N_G[X]$ does not contain $H$ as a minor. 
It was conjectured by \citet{CMPRR26} that this result holds more generally for induced minors: For every planar graph $H$ there exists a function $f$ such that for every positive integer $d$ and every graph $G$ not containing $dH$ as an induced minor, there is $X\subseteq V(G)$ with $|X|\leq f(d)$ such that $G-N_G[X]$ does not contain $H$ as an induced minor. 
As evidence for this conjecture, they showed that it holds when $H$ is a cycle or a `long theta' (see~\cite{CMPRR26} for the precise definition).   
Note that if this conjecture is true for some planar graph $H$, then in particular $H$ has the independence Erd\H{o}s--P\'osa property in $K_{1,t}$-free graphs. 
Thus, cycles and long thetas  have the independence Erd\H{o}s--P\'osa property in $K_{1,t}$-free graphs.

\subsection{Overview of Proof}

We now present a high-level overview of our proof. We work with tree independence number as much as possible so that our machinery may be used to tackle \cref{ConjTreeAlpha}.

The central notion to our results is what we call \emph{$\alpha$-Menger boundedness}. Let $G$ be a graph and $A,B\subseteq V(G)$. A set $X\subseteq V(G)$ is an \defn{$(A,B)$-separator} if $G-X$ does not contain an $(A,B)$-path. We define \defn{$\asep(G,A,B)$} to be the minimum integer $c$ such that $G$ contains an $(A,B)$-separator $X$ where $\alpha(X)=c$. We say that a graph class $\GG$ is \defn{$\alpha$-Menger bounded} if there exists a function $f_{\alpha}$ such that, for every graph $G\in \GG$, for every $k\in \NN$, and for every pair of induced paths $P_1,P_2$ in $G$, the following property holds: If $\asep(G,N_G[P_1],N_G[P_2])\geq f_{\alpha}(k)$ then $G$ contains $k$ pairwise anti-complete $(N_G[P_1],N_G[P_2])$-paths. We call $f_{\alpha}$ an \defn{$\alpha$-Menger binding function} for $\GG$. 

The first stage of our proof reduces \cref{ConjTreeAlpha} to showing $\alpha$-Menger boundedness. This approach for tackling \cref{ConjTreeAlpha} was suggested by \citet{CW2025Ladder}.

\begin{thm}\label{DualityMengerTreeAlpha}
    For every planar graph $H$ and $t\in \NN$, if the class of $K_{1,t}$-free graphs $\GG$ with no $H$ induced minor is $\alpha$-Menger bounded, then $\GG$ has bounded tree independence number.
\end{thm}

\begin{proof}[Proof Sketch]
     For contradiction, suppose there is a graph $G\in \GG$ with large tree independence number. Then $G$ contains a bramble of large `$\alpha$-order'. Using induced Menger, we construct an `induced grid-like minor' of large order. This grid-like minor yields a bounded-degree induced subgraph $G'$ with large treewidth. By applying \cref{KorhonenInducedGrid} on $G'$, we conclude that $G$ contains $H$ as an induced minor, a contradiction.
\end{proof}

See \cref{SectionMengerTreeAlpha,SecIndGridMinor} for the full details of the proof of \cref{DualityMengerTreeAlpha}.

Next, we show that $K_{1,t}$-free graphs excluding a fixed apex-forest induced minor is $\alpha$-Menger bounded.

\begin{thm}\label{ApexForestAlphaMengerBounded}
    For every apex-forest $H$ and $t\in \NN$, the class of $K_{1,t}$-free graphs $\GG$ with no $H$ induced minor is $\alpha$-Menger bounded.
\end{thm}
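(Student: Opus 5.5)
We argue by contraposition. Fix an apex-forest $H$ with apex $v$ and forest $F=H-v$, and fix $t$. We must show that if $G$ is $K_{1,t}$-free, $P_1,P_2$ are induced paths, $A=N_G[P_1]$, $B=N_G[P_2]$, and $G$ contains no $k$ pairwise anti-complete $(A,B)$-paths, then $\asep(G,A,B)$ is bounded by some function of $k$, $t$ and $H$; otherwise $G$ contains $H$ as an induced minor.

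\textbf{Separator from a greedy family of paths.} First choose a maximal family $Q_1,\dots,Q_m$ of pairwise anti-complete induced $(A,B)$-paths, each chosen shortest among the paths anti-complete to the previous ones, so $m<k$. Every $(A,B)$-path then meets $N_G[Q_1\cup\dots\cup Q_m]$. Since the $Q_i$ may be long, this is not yet a separator of bounded independence number. Let $Z=N_G[\bigcup_i Q_i]$; in a $K_{1,t}$-free graph, $\alpha(N_G[X])<t\cdot\alpha(X)$ for any $X$. The task therefore reduces to finding a separator of bounded $\alpha$ inside a small neighbourhood of the $Q_i$.

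\textbf{Localising the separator by induction.} We induct on $m$ or $k$, in the style of the proof of Menger's theorem. Walk along each $Q_i$ and consider the vertices of $Q_i$ where an $(A,B)$-path of $G-N[Q_j]$ ($j\neq i$) can attach. If a bounded-$\alpha$ window of $Q_i$ (a subpath of length depending on $k,t$) already intercepts all such attachments, take its closed neighbourhood as part of the separator. Otherwise there are many disjoint $(A,B)$-"detours" attaching far apart along $Q_i$, and each detour uses $Q_i$ only locally. By shortest-path minimality, these detours together with $P_1$ and $P_2$ then yield a large induced structure: $P_1$ and $P_2$ act as two long "rails", and the many detours and pieces of $Q_i$ are rungs between them that are pairwise anti-complete apart from their ends. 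The $K_{1,t}$-freeness bounds how many rungs a single rail vertex sees, so after passing to a subfamily (by Ramsey), the rungs are spread out along the rails.

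\textbf{Extracting $H$.} From $r$ spread-out rungs between two rails we build an induced minor of $H$. Contract $P_1$ to a single vertex; this is the apex $v$, adjacent to all rungs. Route the forest $F$ through the rung/rail-$P_2$ structure, which is an induced "comb" and whose induced minors include every tree whose size is bounded by the number of rungs (after deleting alternate rungs to make them far apart along $P_2$). More carefully, any forest is an induced minor of a long enough subdivided comb in which each tooth is a long induced path; $P_2$ together with the teeth gives such a comb, and $P_1$ contracts to the apex seeing each tooth end. The apex must be adjacent exactly to the right vertices of $F$. To arrange this, we first realise $F$ inside a subdivided comb, using tooth tips only for vertices of $F$ adjacent to $v$, and suppress the apex's other adjacencies by cutting the teeth that should not touch $P_1$.

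\textbf{Main obstacle.} The hardest step is the second one: turning the failure of a bounded-$\alpha$ window into many well-separated, pairwise anti-complete rungs. The difficulty is that detours may interact with several $Q_j$ and with $N[P_1]$ in a tangled way. We would first try the plan above, bounding the interaction via $K_{1,t}$-freeness and taking closed neighbourhoods of bounded-length subpaths, whose $\alpha$ is below $t$ times their length. If that fails, we would fall back to an induction on $k$ that deletes $N[Q_1]$ restricted to a window and applies the hypothesis for $k-1$.
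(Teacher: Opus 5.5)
There is a genuine gap, and it is in Step~2, which you yourself flag as the main obstacle. Step~2 is meant to turn the failure of a bounded-$\alpha$ window into many rungs between $P_1$ and $P_2$ that are pairwise anti-complete away from the rails. But the $(N_G[P_1],N_G[P_2])$-subpaths of such rungs are already pairwise anti-complete $(A,B)$-paths. So if Step~2 worked you would be done without Step~3, and the excluded graph $H$ would never have been used.

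Nothing in Step~2 uses $H$: it relies only on $K_{1,t}$-freeness, shortest-path minimality and Ramsey. Moreover, passing from ``many disjoint detours'' to ``many anti-complete rungs'' is exactly the induced Menger property you are trying to prove. An argument of this kind would show that the class of all $K_{1,t}$-free graphs is $\alpha$-Menger bounded. By \cref{AlphaMengerTreeInd}, that would settle Conjecture~\ref{ConjTreeAlpha} for every planar $H$. So the exclusion of $H$ has to enter inside this step, not after it.

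Step~3 is also wrong on its own terms. With $P_1$ contracted to the apex, $F$ must be found as an induced minor of $P_2$ plus its teeth, i.e.\ of a subdivided comb. A comb has pathwidth at most $2$, and pathwidth is minor-monotone. Hence forests such as the complete ternary tree of height $3$, or complete binary trees of large height, are not induced minors of any comb, and a rails-and-rungs structure cannot certify a general apex-forest.

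The paper arranges things so that $H$ does the work inside the extraction step.
\begin{itemize}
\item It replaces $H$ by a special apex-tree, whose apex is adjacent only to leaves of the tree (\cref{specialapex}). It then uses the constellation theorem of Chudnovsky, Hilaire and Spirkl to get that $G$ has no $K_{c,c}$ induced minor (\cref{ExcludedBipartite}).
\item Rather than fixing a maximal greedy family of shortest paths, it extracts paths one at a time, each chosen so that $\asep(G-N_G[P],A,B)$ stays large.
\item The core \cref{TechnicalInducedMenger} shows what happens when no such path exists. By induction on the number of non-leaves, one grows an induced model of the whole rooted tree that avoids $B$, whose root branch set meets $A$, and whose leaf branch sets each keep large $\alpha$-separation to $B$ after deleting the neighbourhood of the rest of the model.
\item The tangling you worry about is handled inside that lemma. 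Candidate leaf branch sets are prefixes of geodesics, and small interface sets $D_i$ are deleted. Heavy interaction among candidates, or with cloned leaves, produces an $n$-array and hence, via K\H{o}v\'ari--S\'os--Tur\'an (\cref{array}), a $K_{c,c}$ induced minor. Ramsey's theorem and \cref{BipartiteRamsey} then select non-interfering candidates.
\item If the tree model appears, $P_2$ together with geodesics from the apex-neighbouring leaves to $N_G[P_2]$ forms the apex branch set (\cref{LemIteratePaths}), which yields $H$.
\end{itemize}
So the obstruction to induced Menger here is a tree grown from $A$ and closed up through $B$, not a two-rail structure.
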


See \cref{MengerForest} for the proof of \cref{ApexForestAlphaMengerBounded}. Together with \cref{DualityMengerTreeAlpha}, this immediately implies \cref{MainThmApexForest}. In the case when $H$ is a forest, we further employ an Erd\H{o}s--P\'osa style lemma to convert the tree decomposition into a path decomposition with bounded independence number, thereby proving \cref{MainThmForest}.  

%%%%%%%%%%%%%%%%%%%%%%%%%%%%%%%%%%%%%%%%%%%%%%%%%%%%%

\section{Preliminaries}\label{SectionPrelim}

Let $\NN=\{1,2,\dots\}$ and $\NN_0=\NN\cup \{0\}$. For $n\in \NN$, let $[n]:=\{1,2,\dots,n\}$.

Let $G$ be a graph and $X,Y\subseteq V(G)$. For $v \in V(G)$, let $N_G(v):=\{u\in V(G):uv \in E(G)\}$ and $N_G[v]:=N_G(v)\cup \{v\}$. 
For $X\subseteq V(G)$, let $N_G(X):=(\bigcup_{v\in X} N_G(v))\setminus X$ and $N_G[X]:=N_G(X)\cup X$. 
For $X\subseteq V(G)$, we write $G[X]$ for the subgraph of $G$ induced on $X$. We write $G-X$ for the graph $G[V(G)\setminus X]$. We say that $X$ and $Y$ are \defn{anti-complete} if $X\cap Y=\emptyset$ and there is no edge in $G$ with one end in $X$, the other in $Y$. An \defn{$(X, Y)$-path} is a path $P=(v_1,\ldots,v_k)$ with $V(P)\cap X = \{v_1\}$ and $V(P)\cap Y = \{v_k\}$.
An $(X,Y)$-path is \defn{geodesic} if it is a shortest $(X,Y)$-path in $G$.

Throughout this paper, whenever there is no risk of confusion, we identify a vertex set $X$ with the induced subgraph $G[X]$, and a subgraph $H$ with its vertex set $V(H)$. For example, $\alpha(X)=\alpha(G[X])$ and $N_G[H]=N_G[V(H)]$.

A \defn{minor model} of a graph $H$ in a graph $G$ is a collection $\mathcal{W}=(W_v \colon v \in V(H))$ of pairwise vertex-disjoint connected subgraphs in $G$ such that $W_u$ and $W_v$ are adjacent whenever $uv\in E(H)$. Define $V(\mathcal{W})=\bigcup(W_v\colon v\in V(H))$. If $W_x$ and $W_y$ are anti-complete whenever $xy\not \in E(H)$, then $\mathcal{W}$ is an \defn{induced minor model}. Each $W_u$ is called a \defn{branch set} of the model. It is folklore that $H$ is an (induced) minor of $G$ if and only if $G$ contains an (induced) minor model of $H$.

Given a tree $T$ and a node $r$ of $T$, we denote by $(T, r)$ the tree rooted at node $r$. 
Let $(T,r)$ be a rooted tree. A node $x\in V(T)$ is a \defn{descendant} of a node $z\in V(T)$ if the (unique) $(x,r)$-path in $T$ contains $z$. If $x\neq z$, then $x$ is a \defn{strict descendant} of $z$.

Let $G$ be a graph. An \defn{independent set (clique)} in $G$ is a set of pairwise non-adjacent (adjacent) vertices. A set $D\subseteq V(G)$ is \defn{dominating} if every vertex in $V(G)\setminus D$ is adjacent to a vertex in $D$. The \defn{independence number} $\alpha(G)$ (\defn{clique number} $\omega(G)$) of $G$ is the maximum size of an independent set (clique) in $G$. The \defn{domination number} $\gamma(G)$ of $G$ is the minimum size of a dominating set in $G$. Since every inclusion-wise maximal independent set is a dominating set, we have $\gamma(G)\leq \alpha(G)$. We frequently make use of the fact that $G$ being $K_{1,t}$-free implies $\alpha(N_G[v])\leq t$ for all $v\in V(G)$.

\citet{DALLARD2024indep} observed that tree independence number is closed under taking induced minors. It is straightforward to see that the same holds for path independence number.

\begin{prop}[\cite{DALLARD2024indep}]\label{IndependentMinor}
    For every graph $G$, for every induced minor $G'$ of $G$, we have $\atw(G')\leq \atw(G)$ and $\apw(G')\leq \apw(G)$.
\end{prop}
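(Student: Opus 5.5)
The plan is to take an optimal decomposition of $G$ and push it down to $G'$ along the induced minor model, checking that no bag's independence number grows. Since $G'$ is an induced minor of $G$, there is an induced minor model $\mathcal{W}=(W_v\colon v\in V(G'))$ of $G'$ in $G$. Fix a tree-decomposition $(B_x\colon x\in V(T))$ of $G$ in which every bag has independence number at most $k:=\atw(G)$; for the path statement, take $T$ to be a path and $k:=\apw(G)$. For each $x\in V(T)$ define
\[
B'_x:=\{v\in V(G')\colon V(W_v)\cap B_x\neq\emptyset\}.
\]
The indexing tree is unchanged, so a path-decomposition stays a path-decomposition, and both inequalities follow from one argument.

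First I check that $(B'_x\colon x\in V(T))$ is a tree-decomposition of $G'$.

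\textbf{Subtree condition.} For $v\in V(G')$, the set $\{x\colon v\in B'_x\}$ is the union over $w\in V(W_v)$ of the subtrees $T_w:=\{x\colon w\in B_x\}$. It is non-empty because $W_v$ is non-empty. It is connected because $W_v$ is connected, and whenever $ww'$ is an edge of $W_v$, the subtrees $T_w$ and $T_{w'}$ share a node, by the edge condition for $G$.

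\textbf{Edge condition.} If $uv\in E(G')$, then $W_u$ and $W_v$ are adjacent. So some edge $ww'$ of $G$ has $w\in W_u$ and $w'\in W_v$, and some bag $B_x$ contains both $w$ and $w'$. Then $u,v\in B'_x$.

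Next I bound the independence number of each new bag, which is the only point where inducedness of the model is used. Let $I\subseteq B'_x$ be independent in $G'$. For each $v\in I$, choose $w_v\in V(W_v)\cap B_x$. These vertices are distinct because the branch sets are disjoint. For distinct $u,v\in I$ we have $uv\notin E(G')$, so $W_u$ and $W_v$ are anti-complete by the definition of an induced minor model, and hence $w_uw_v\notin E(G)$. Thus $\{w_v\colon v\in I\}$ is an independent set of size $|I|$ inside $B_x$. This gives $\alpha(B'_x)\leq\alpha(B_x)\leq k$.

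Combining the two steps, $\atw(G')\leq\atw(G)$ and $\apw(G')\leq\apw(G)$. None of the steps is a real obstacle; the only care needed is to invoke anti-completeness for non-adjacent pairs when transferring independent sets, and to use connectivity of the branch sets for the subtree condition.
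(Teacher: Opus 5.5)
Your proof is correct. The projected bags $B'_x$ form a tree-decomposition of $G'$: connectivity of the branch sets gives the subtree condition, and adjacency of branch sets gives the edge condition. Anti-completeness of branch sets for non-adjacent vertices turns any independent set in $B'_x$ into one of the same size in $B_x$.

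The paper gives no proof of its own here. It cites \cite{DALLARD2024indep} for the tree version and calls the path version straightforward. Your projection along the whole induced minor model is the standard argument, which is often presented one vertex deletion or edge contraction at a time and then iterated. Because you keep the indexing tree fixed, a single argument covers both $\atw$ and $\apw$, which is exactly the extension the paper leaves to the reader.
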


We frequently use Ramsey's Theorem~\cite{ramsey1930problem}.

\begin{thm}[\cite{ramsey1930problem}]\label{RamseyTheorem}
    There exists a function $R$ such that, for all $r,s\in \NN$, every graph $G$ on $R(r,s)$ vertices contains an independent set of size $r$ or a clique of size $s$.
\end{thm}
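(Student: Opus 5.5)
Ramsey's Theorem is cited from \cite{ramsey1930problem}, so the proof is standard and needs only a sketch. We show that $R(r,s)\leq\binom{r+s-2}{r-1}$ by induction on $r+s$. This bound also defines the function $R$.

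For the base cases, take $r=1$ or $s=1$. Then a single vertex is both an independent set of size $1$ and a clique of size $1$. So $R(1,s)=R(r,1)=1$, which matches the binomial bound.

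For the inductive step, let $r,s\geq 2$ and let $G$ have at least $R(r-1,s)+R(r,s-1)$ vertices. Fix a vertex $v$. Then $|N_G(v)|+|V(G)\setminus N_G[v]|=|V(G)|-1$, so by pigeonhole one of the following holds.
\begin{itemize}
\item $|N_G(v)|\geq R(r,s-1)$. By induction, $G[N_G(v)]$ contains an independent set of size $r$, and we are done. Otherwise it contains a clique of size $s-1$; adding $v$, which is adjacent to all of it, gives a clique of size $s$.
\item $|V(G)\setminus N_G[v]|\geq R(r-1,s)$. By induction this set contains a clique of size $s$, and we are done. Otherwise it contains an independent set of size $r-1$; adding $v$, which is non-adjacent to all of it, gives an independent set of size $r$.
\end{itemize}
Hence $R(r,s)\leq R(r-1,s)+R(r,s-1)$. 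Pascal's rule then gives the binomial bound.

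The only subtlety is the pigeonhole count: the two sets have total size $|V(G)|-1$, so one of them must reach its threshold. Nothing here is a real obstacle.

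Since the paper needs only the existence of $R$, a finite upper bound suffices. The statement says every graph on $R(r,s)$ vertices has the property. If $R$ is taken as the upper bound above, a graph with at least that many vertices has the property, because we may pass to an induced subgraph on exactly $R(r,s)$ vertices.
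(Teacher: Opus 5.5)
Your proof is correct. It is the standard Erd\H{o}s--Szekeres neighbourhood-splitting argument: the recursion $R(r,s)\leq R(r-1,s)+R(r,s-1)$ together with Pascal's rule gives the explicit bound $\binom{r+s-2}{r-1}$. Your base cases and pigeonhole count are right.

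The paper gives no proof of this statement. It cites Ramsey and uses $R$ as a black box, so it needs only two things: that $R$ exists, and that the conclusion still holds for graphs with \emph{at least} $R(r,s)$ vertices. The second is used, for example, in the bound $|X_2|<R(k,w)$ in the Erd\H{o}s--P\'osa-type theorem. Your closing remark about passing to an induced subgraph covers this.

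One cosmetic point: you use $R$ both for the function being defined and for the true Ramsey number. It is cleaner to induct directly on the statement ``every graph with at least $\binom{r+s-2}{r-1}$ vertices contains an independent set of size $r$ or a clique of size $s$''. That statement is already monotone in the number of vertices, so the final patch becomes unnecessary.
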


\section{$\alpha$-Menger $\Rightarrow$ Tree Independence Number}\label{SectionMengerTreeAlpha}

In this section, we prove that for every $\alpha$-Menger bounded graph class, every $K_{1,t}$-free graph in the class with large tree independence number contains a large grid as an induced minor. For $k\in \NN$, the \defn{$(k\times k)$-grid} is the graph with vertex set $\{(x,y)\colon x,y\in [k]\}$ where $(x_1,y_1)$ and $(x_2,y_2)$ are adjacent whenever $|x_1-x_2|+|y_1-y_2|=1$.

\begin{thm}\label{AlphaMengerTreeInd}
     Let $\GG$ be an $\alpha$-Menger bounded graph class with $\alpha$-Menger binding function $f_{\alpha}$. There exists a function $f$ (depending on $f_{\alpha}$) such that for all $k,t\in \NN$, every $K_{1,t}$-free graph $G\in \GG$ with $\atw(G)\geq f(k,t)$ contains the $(k\times k)$-grid as an induced minor.
\end{thm}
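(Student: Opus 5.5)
We follow the proof sketch of \cref{DualityMengerTreeAlpha}: from large tree independence number extract a highly connected structure in the $\alpha$-sense, use $\alpha$-Menger boundedness to route many pairwise anti-complete paths between pieces of it, assemble these paths into a bounded-degree induced subgraph of large treewidth, and finish with \cref{KorhonenInducedGrid} applied to the grid.

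\textbf{Step 1: a well-linked family of induced paths.} Take a duality between tree independence number and $\alpha$-brambles, or more concretely $\alpha$-weighted separations. If $\atw(G)$ is huge, then for a suitable $m$ there is no balanced separator of bounded independence number. As in the standard treewidth argument, this yields a set $Z$ with $\alpha(Z)$ large such that every separator splitting $Z$ in a balanced way has large independence number.

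Since $G$ is $K_{1,t}$-free, an independent set in $Z$ of size $N$ has neighbourhoods of bounded $\alpha$. We can therefore pick many vertices of $Z$, or short induced paths through them, that are pairwise far apart; this uses $\gamma\le\alpha$ to cover $N[v]$ with at most $t$ vertices. We then obtain induced paths $P_1,\dots,P_{2m}$ with pairwise anti-complete closed neighbourhoods, such that $\asep(G,N[P_i],N[P_j])$ is large for all $i\neq j$. This holds because $\alpha(N[P_i])$ is small relative to the connectivity, so a small-$\alpha$ separator between two of them would contradict the well-linkedness of $Z$. By $\alpha$-Menger boundedness, for each pair there are $k'$ pairwise anti-complete $(N[P_i],N[P_j])$-paths, which we take geodesic and hence induced.

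\textbf{Step 2: building a grid-like induced subgraph.} Start with a single pair, with $P_1$ as the ``top'' and $P_2$ as the ``bottom'', and $k'$ anti-complete linking paths $Q_1,\dots,Q_{k'}$ as ``columns''. We then need ``rows'' crossing the columns. The plan is to apply $\alpha$-Menger repeatedly inside the graph, between $N[Q_a]$ and $N[Q_b]$ after deleting small-$\alpha$ neighbourhoods, and use Ramsey-type cleaning (\cref{RamseyTheorem}) to obtain a family of paths in which each crossing pattern is controlled.

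The resulting union $G'$ of columns and rows is an induced subgraph. Its maximum degree is bounded: each vertex lies on boundedly many of the chosen induced paths, and since those paths are induced and the graph is $K_{1,t}$-free, a vertex sees at most $2t$ vertices of each path and meets boundedly many paths. Its treewidth is large because it contains a large ``grid-like'' minor, for example via the Grid Minor Theorem applied to the minor formed by contracting row and column segments.

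\textbf{Step 3: conclusion.} Apply \cref{KorhonenInducedGrid} to $G'$ with $H$ the $(k\times k)$-grid and $\Delta$ the degree bound from Step 2. This gives the grid as an induced minor of $G'$, hence of $G$, and $f(k,t)$ is determined by composing the bounds.

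The main obstacle is Step 2: making the crossing paths interact sparsely so that the union has bounded degree while still containing many column–row intersections. I would first try to choose rows as geodesic $(N[Q_a],N[Q_b])$-paths in $G$ minus the other columns' neighbourhoods. Geodesicity forces any vertex to meet a bounded number of such paths, and at worst we lose a bounded fraction of rows per Ramsey step.
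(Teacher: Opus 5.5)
\textbf{There is a genuine gap, already in Step 1.} For any sets $A,B$, the set $A$ is itself an $(A,B)$-separator, so $\asep(G,N_G[P_i],N_G[P_j])\le\alpha(N_G[P_i])\le t|V(P_i)|$. Short induced paths through single vertices of $Z$ therefore can never be pairwise highly $\alpha$-separated. Your justification (``$\alpha(N[P_i])$ is small relative to the connectivity'') points the wrong way. A separator between two tiny pieces need not split $Z$ in a balanced way, so well-linkedness of $Z$ says nothing about it.

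What you need are pairwise anti-complete connected pieces, each of whose closed neighbourhood carries a large share of the obstruction. A well-linked set alone does not tell you how to find these. The paper proceeds as follows:
\begin{enumerate}
\item It passes to a strong bramble $\BB$ of large $\alpha$-order (\cref{BrambleDual}).
\item It takes a single induced path $P$ whose closed neighbourhood meets every element of $\BB$ (\cref{PathBramble}, a non-trivial input).
\item It cuts $P$ greedily into consecutive segments, separated by one-vertex gaps so that they are anti-complete, such that the neighbourhood of each segment meets a sub-bramble of $\alpha$-order at least $k$.
\item Then $\asep(G,N_G[P_i],N_G[P_j])\ge k$ for any two segments, because an unhit element from each sub-bramble would intersect the other and give a path avoiding the separator (\cref{AntiPathsBramble}).
\end{enumerate}

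\textbf{Step 2 has a second, independent gap.} Nothing guarantees that $\asep$ between $N_G[Q_a]$ and $N_G[Q_b]$ is large for two linking paths, since they are not among the highly linked pieces. So $\alpha$-Menger boundedness cannot be invoked to produce rows. Even if rows existed, there is no planarity forcing them to cross the columns in a grid pattern. The degree bound is not the real issue, since boundedly many induced paths in a $K_{1,t}$-free graph always give bounded degree. The issue is certifying large treewidth of the union, and neither geodesicity nor Ramsey cleaning controls the intersection pattern.

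The missing idea is to avoid building a grid at all and work with induced grid-like minors: collections of induced paths whose touching graph is bipartite and has a $K_\ell$ minor.
\begin{enumerate}
\item Take $\ell$ pairwise highly $\alpha$-linked paths $P_1,\dots,P_\ell$ as above.
\item By $\alpha$-Menger boundedness, take for every pair a family $\mathcal{Q}_{i,j}$ of $\lceil 4e\binom{\ell}{2}d(\ell)\rceil$ pairwise anti-complete induced linking paths.
\item Either the touching graph of some $\mathcal{Q}_{i,j}\cup\mathcal{Q}_{a,b}$ has a $K_\ell$ minor, which is already an induced grid-like minor. Or all these bipartite touching graphs are $d(\ell)$-degenerate by \cref{ExtremalMinor}. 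In that case the independent transversal lemma (\cref{IndTransversal}) selects one path per pair, pairwise anti-complete. Together with the $P_i$, these give a touching graph containing the $1$-subdivision of $K_\ell$ (\cref{FindingGLMinor}).
\item \cref{GridLikeMinorDegree} then gives an induced subgraph of maximum degree at most $2t$ and treewidth at least $\lceil\ell/2\rceil-1$, the latter via a bramble. The degree bound holds because each vertex lies on at most two of the paths and each side of the bipartition consists of anti-complete paths.
\end{enumerate}
After this, your Step 3, the application of \cref{KorhonenInducedGrid}, goes through as written.
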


Since grids contain all planar graphs as induced minors, this establishes \cref{DualityMengerTreeAlpha}.

Our main tool is an induced version of the grid-like minors of Reed and Wood \cite{reed2012polynomial}. Let $G$ be a graph. We say that two sets $X,Y \subseteq V(G)$ \defn{touch} if $X \cap Y \neq \emptyset$ or there is an edge of $G$ between $X$ and $Y$. For a collection $\HH$ of subgraphs of $G$, the \defn{intersection (touching) graph} $J$ of $\HH$ is the graph with vertex set $\HH$ where two vertices of $J$ are adjacent if and only if their corresponding subgraphs intersect (touch). A \defn{grid-like minor of order $\ell$} is a collection $\mathcal{H}$ of paths in $G$ whose intersection graph is bipartite and contains $K_{\ell}$ as a minor. In particular, if $G$ contains a $(n \times n)$-grid, then letting $\mathcal{H}$ be the collection of vertical and horizontal paths yields a grid-like minor of order $n+1$. Thus grid-like minors are a more general structure than grids which certifies large treewidth.

For our purposes, we require an induced variant. We define an \defn{induced grid-like minor of order $\ell$} of a graph $G$ to be a collection $\mathcal{H}$ of \emph{induced} paths in $G$ whose \emph{touching} graph is bipartite and contains $K_{\ell}$ as a minor. 

We use brambles to show that induced grid-like minors have large treewidth. For a graph $G$, a \defn{bramble} $\mathcal{B}$ in $G$ is a set of pairwise touching connected subgraphs. A set $S \subseteq V(G)$ is a \defn{hitting set} of $\mathcal{B}$ if $S$ intersects every element of $\mathcal{B}$. The \defn{order} of $\mathcal{B}$ is the minimum size of a hitting set.

\begin{thm}[\cite{seymour1993graph}]\label{TreewidthDuality}
    For every $k\in \NN$, a graph $G$ has treewidth at least $k$ if and only if $G$ has a bramble of order at least $k+1$.
\end{thm}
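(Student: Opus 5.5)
The plan is to prove the two directions separately. The easy direction uses the Helly property of subtrees. The hard direction follows the short proof of Bellenbaum and Diestel, which refines the original game-theoretic argument of Seymour and Thomas.

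\emph{Bramble $\Rightarrow$ large treewidth.} Let $\mathcal{B}$ be a bramble of order at least $k+1$, and let $(B_x : x\in V(T))$ be any tree-decomposition of $G$.
For each $H\in\mathcal{B}$, let $T_H$ be the set of nodes whose bag meets $V(H)$.
\begin{itemize}
\item Since $H$ is connected, axiom (i) together with axiom (ii) applied to the edges of $H$ shows that $T_H$ induces a subtree of $T$.
\item If $H,H'\in\mathcal{B}$ share a vertex, then $T_H\cap T_{H'}\neq\emptyset$ directly.
\item If instead they are joined by an edge $uv$, then by (ii) some bag contains both $u$ and $v$, so again $T_H\cap T_{H'}\neq\emptyset$.
\end{itemize}
So $\{T_H\}$ is a family of pairwise intersecting subtrees of a tree. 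By the Helly property for subtrees of a tree, some node $x$ lies in all of them. Then $B_x$ is a hitting set of $\mathcal{B}$, so $|B_x|\geq k+1$ and the decomposition has width at least $k$. As the decomposition was arbitrary, $\tw(G)\geq k$.

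\emph{Large treewidth $\Rightarrow$ bramble.} We argue by contrapositive: assuming every bramble has order at most $k$, we build a tree-decomposition of width less than $k$. Fix a bramble $\mathcal{B}$.
\begin{itemize}
\item Call a tree-decomposition \emph{$\mathcal{B}$-admissible} if every bag of size at least $k+1$ fails to cover $\mathcal{B}$, meaning some element of $\mathcal{B}$ avoids it.
\item We prove by downward induction on $|\mathcal{B}|$ that every bramble admits a $\mathcal{B}$-admissible decomposition. Applying this to the empty bramble then gives a decomposition in which every bag has at most $k$ vertices, as required.
\item For the inductive step, fix $\mathcal{B}$ and choose a minimum hitting set $X$, so $|X|\leq k$.
\item For each component $C$ of $G-X$, let $\mathcal{B}_C=\mathcal{B}\cup\{C\}$ if $C$ touches every element of $\mathcal{B}$. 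Otherwise $C$ misses some element of $\mathcal{B}$, and we treat that case directly.
\item When $\mathcal{B}_C$ is strictly larger than $\mathcal{B}$, induction gives a $\mathcal{B}_C$-admissible decomposition $D_C$.
\end{itemize}

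The key step is to prune each $D_C$ to the graph $G[C\cup N(C)]$ and re-root it so that $N(C)\subseteq X$ lies in a single bag. We then glue all these pieces at a central bag $X$. Since $|X|\leq k$, the central bag is harmless.

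The main obstacle is moving $N(C)$ into one bag without creating a new large bag that covers $\mathcal{B}$. This is where Menger's theorem enters. Minimality of $X$ yields $|X|$ disjoint paths from $X$ to a suitable separator inside $D_C$. Contracting along these paths, each large bag can be replaced by one of no greater size whose vertices project injectively to the old bag. Consequently any bramble element avoided before is still avoided, because $C$ itself is in $\mathcal{B}_C$ and hence touches everything.

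Checking this "shifting along linkages" carefully is the heart of the argument. I would follow Bellenbaum--Diestel's formulation, using $\mathcal{B}$-admissibility relative to separators of size at most $k$, to keep the bookkeeping manageable.
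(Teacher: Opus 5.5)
The paper does not prove this result. It is quoted from Seymour and Thomas, and only the direction ``a bramble of order at least $k+1$ implies $\tw(G)\geq k$'' is used, in the proof of Lemma~\ref{GridLikeMinorDegree}. Your proof of that direction is complete and correct: the sets $T_H$ form pairwise intersecting subtrees, and the Helly property gives a bag hitting every element.

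For the converse, your outline has the right architecture (Bellenbaum--Diestel). However, the step you postpone is the whole content, and the justification you give for it does not work.

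\emph{Injectivity does not preserve avoidance.} Injectivity of the projection only bounds bag sizes. It says nothing about which elements of $\mathcal{B}$ a new bag meets. A vertex $x\in X$ moved into the bag at $s$ was not in $V_s$, and it may lie in exactly the element that $V_s$ avoided. The fact that $C$ touches everything does not address this.

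\emph{The size bound is not automatic either.} If the target bag meets $C$, a linkage path starting at $x\in N(C)$ can run inside $C$. Then $x$ is charged to a vertex of $C$ that stays in the bag. For example, take the path $y_1c_1c_2y_2$ with $k=3$, $\mathcal{B}=\{\{y_1,c_1\},\{c_2,y_2\}\}$, $X=\{y_1,y_2\}$, $C=\{c_1,c_2\}$, and bags $\{y_1,c_1\},\{c_1,c_2\},\{c_2,y_2\}$. The bag $\{c_1,c_2\}$ covers $\mathcal{B}\cup\{C\}$, but pushing $X$ into it creates a bag of size $4$ that covers $\mathcal{B}$.

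The missing idea is the choice of target.
\begin{itemize}
\item You may assume $D_C$ is not already $\mathcal{B}$-admissible, since otherwise you are done. So $D_C$ has a bag $V_t$ with $|V_t|>k$ covering $\mathcal{B}$, and $\mathcal{B}_C$-admissibility forces $V_t\cap C=\emptyset$.
\item Every $X$--$V_t$ separator covers $\mathcal{B}$. So minimality of $X$ and Menger give disjoint paths $P_x$ ($x\in X$) from $X$ to $V_t$. Since $V_t\cap C=\emptyset$, each $P_x$ meets $X\cup C$ only in $x$.
\item Write $T_x$ for the set of nodes whose bags contain $x$. Put $W_s:=(V_s\cap C)\cup\{x\in X : s \text{ lies in the smallest subtree containing } T_x\cup\{t\}\}$. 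This is a tree-decomposition of $G[X\cup C]$ with $W_t=X$.
\item If $x\in W_s\setminus V_s$, then $V_s$ separates $x$ from $V_t$. Hence $P_x$ supplies a private vertex of $V_s\setminus(X\cup C)$, which gives $|W_s|\leq|V_s|$. Also, every $B\in\mathcal{B}$ containing $x$ meets $V_s$, because $B$ also meets $V_t$.
\item Now suppose $|W_s|>k$. Then $|V_s|>k$, and $W_s$ meets $C$ because $|X|\leq k$; since $W_s\cap C=V_s\cap C$, so does $V_s$. Hence $V_s$ avoids some $B\in\mathcal{B}$, and $W_s$ avoids the same $B$.
\end{itemize}
This is what $C\in\mathcal{B}_C$ is really used for, not the touching property. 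When $C$ fails to touch some $B$, use the bags $X$ and $C\cup N(C)$. The latter avoids $B$, since any set meeting $N(C)$ touches $C$.
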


Using brambles, we show that in the $K_{1,t}$-free setting, induced grid-like minors correspond to induced subgraphs with bounded maximum degree and large treewidth.

\begin{lem}\label{GridLikeMinorDegree}
    Let $\ell,t\in \NN$ and let $G$ be a $K_{1,t}$-free graph that contains an induced grid-like minor of order $\ell$. Then $G$ contains an induced subgraph of maximum degree at most $2t$ and treewidth at least $\ceil{\ell/2}-1$.
\end{lem}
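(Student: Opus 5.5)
Let $\mathcal{H}$ be an induced grid-like minor of order $\ell$, with touching graph $J$ bipartite with parts $\mathcal{H}_1,\mathcal{H}_2$ and containing a $K_\ell$ minor. Set $G' := G[\bigcup_{P\in\mathcal{H}} V(P)]$. We show that $G'$ has maximum degree at most $2t$ and treewidth at least $\ceil{\ell/2}-1$.

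\emph{Degree bound.} Fix $v\in V(G')$. Since $J$ is bipartite, no two paths in the same part touch. In particular, two paths in the same part are disjoint, so $v$ lies on exactly one path of $\mathcal{H}_1$ or of $\mathcal{H}_2$, and not both (otherwise those two paths would share $v$ and be adjacent in $J$ across parts, which is fine; we do not need disjointness across parts). Consider the neighbours of $v$ in $G'$ lying on paths of $\mathcal{H}_1$. These paths are pairwise anti-complete, since they do not touch. Moreover, each induced path meets $N_G(v)$ in at most $t$ vertices forming a set with independence number at most $t-1$.

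Better, we can bound the neighbours of $v$ on one induced path $P$ directly. Since $P$ is induced, $N(v)\cap V(P)$ induces a linear forest, so it has an independent set of size at least half its size. Choosing one vertex from each of several pairwise anti-complete paths also gives an independent set. Combining these, let $a_P=|N(v)\cap V(P)|$. Then $\alpha(N(v)\cap V(\mathcal{H}_1))\ge \sum_P \ceil{a_P/2}$, and this is at most $t-1$ by $K_{1,t}$-freeness. Hence $\sum_P a_P\le 2(t-1)$. We need this bound for each part separately, which gives a degree of at most $4(t-1)$. That is too weak, so we refine.

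The refinement: if $v$ lies on a path $P_0\in\mathcal{H}_1$, then $v$ is adjacent to no other path of $\mathcal{H}_1$, since otherwise those paths would touch. So the $\mathcal{H}_1$-neighbours of $v$ are just its at most $2$ neighbours on the induced path $P_0$. Together with at most $2(t-1)$ neighbours on $\mathcal{H}_2$, this gives degree at most $2t$. If instead $v$ lies only on paths of $\mathcal{H}_2$, the argument is symmetric. This handles both cases.

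\emph{Treewidth bound.} Take the $K_\ell$ minor in $J$ with branch sets $\mathcal{C}_1,\dots,\mathcal{C}_\ell$, each a connected subgraph of $J$. Set $B_i=\bigcup_{P\in\mathcal{C}_i}V(P)$. Each $B_i$ is connected in $G'$, because consecutive touching paths are joined by a shared vertex or an edge. Any two $B_i$ touch, since some path of $\mathcal{C}_i$ touches some path of $\mathcal{C}_j$. Hence $\mathcal{B}=\{G'[B_i]\}$ is a bramble in $G'$.

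A hitting set $S$ must meet every $B_i$. The branch sets $\mathcal{C}_i$ are disjoint as sets of paths, but a vertex may lie on two paths, one from each part. So a single vertex lies in at most two of the $B_i$, and therefore $|S|\ge \ceil{\ell/2}$. By \cref{TreewidthDuality}, $\tw(G')\ge \ceil{\ell/2}-1$.

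The main obstacle is the degree count. It needs the observation that a vertex on a path of one part sees no other path of that part, and that an induced path contributes at most two neighbours to $v$ per independent vertex.
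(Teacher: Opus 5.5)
Your proof is correct and follows the paper's argument: you restrict to the union of the paths, bound the degree by at most $2$ neighbours on the vertex's own path plus at most $2(t-1)$ neighbours on the pairwise anti-complete induced paths of the other part, and get the treewidth from the bramble of unions of the $K_\ell$ branch sets, where each vertex lies in at most two of them. The exploratory claims in your first degree paragraph (that $v$ lies on a path of only one part, and that an induced path meets $N_G(v)$ in at most $t$ vertices) are false as stated but unused in the final argument, so you should delete them.
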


\begin{proof}
    Let $\mathcal{H}$ be an induced grid-like minor of $G$ with bipartite touching graph $J$ containing a $K_{\ell}$ minor. By taking an induced subgraph of $G$ if necessary, we may assume that every vertex in $G$ belongs to some path in $\mathcal{H}$.

    We first show that $G$ has large treewidth. Let $(B_x\colon x\in V(K_{\ell}))$ be a minor model of $K_{\ell}$ in $J$. For each $x\in V(K_{\ell})$, define $C_x:=G[\bigcup(V(P)\colon P\in B_x)]$. Since $B_x$ is connected, $C_x$ is also connected. Since $B_x$ and $B_y$ are adjacent in $J$, $C_x$ and $C_y$ are adjacent in $G$. Hence, $(C_x\colon x\in V(K_{\ell}))$ is a bramble in $G$. As $J$ is bipartite, every vertex $v\in V(G)$ is contained in at most two of the subgraphs $C_x$. Consequently, every hitting set of the bramble has size at least $\ceil{\ell/2}$. By \cref{TreewidthDuality}, $\tw(G) \geq \ceil{\ell/2}-1$.

    We now bound the maximum degree of $G$. Let $(X,Y)$ be a bipartition of $J$. Define $\widetilde{X}:=\bigcup (V(P) \colon P\in X)$ and $\widetilde{Y}:=\bigcup (V(P) \colon P\in Y)$. Then $V(G)= \widetilde{X} \cup \widetilde{Y}$. Fix $v\in \widetilde{X}$ and let $P_v$ denote the unique path in $X$ containing $v$. Since $J$ is a touching graph and $P_v$ is induced, we have $|N_G(v)\cap \widetilde{X}|=|N_G(v)\cap V(P_v)|\leq 2$. Since $G[\widetilde{Y}]$ is a collection of anti-complete induced paths and $\alpha(N_G(v)) \leq t-1$, we have $|N_G(v)\cap \widetilde{Y}|\leq 2(t-1)$. Therefore, $\deg_G(v)\leq 2t$. By symmetry, the same holds for all vertices in $\widetilde{Y}$, as required.
\end{proof}

\cref{GridLikeMinorDegree,KorhonenInducedGrid} gives the following.

\begin{lem}\label{GridtoGridlike}
    There exists a function $f$ such that, for all $k,t\in \NN$, every $K_{1,t}$-free graph with no $(k\times k)$-grid induced minor has no induced grid-like minor of order $f(k,t)$.
\end{lem}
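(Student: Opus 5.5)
The plan is to combine \cref{GridLikeMinorDegree} with \cref{KorhonenInducedGrid}, applied with $H$ the $(k\times k)$-grid (which is planar) and $\Delta:=2t$. For these parameters, \cref{KorhonenInducedGrid} supplies an integer $c=c(k,t)$ such that every graph of maximum degree at most $2t$ with no $(k\times k)$-grid induced minor has treewidth at most $c$. I will define
\[
f(k,t):=2c(k,t)+5,
\]
which is chosen so that $\lceil f(k,t)/2\rceil-1\geq c+2>c$. In fact any value with $\lceil f/2\rceil -1>c$ would do.

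The argument is by contradiction. Suppose $G$ is $K_{1,t}$-free, has no $(k\times k)$-grid induced minor, and yet contains an induced grid-like minor of order $f(k,t)$. Then \cref{GridLikeMinorDegree} gives an induced subgraph $G'$ of $G$ with maximum degree at most $2t$ and $\tw(G')\geq \lceil f(k,t)/2\rceil-1>c$. Every induced minor of $G'$ is an induced minor of $G$, since an induced subgraph of $G'$ is an induced subgraph of $G$. So $G'$ has no $(k\times k)$-grid induced minor, and \cref{KorhonenInducedGrid} then forces $\tw(G')\leq c$, which is a contradiction.

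There is no real obstacle here. The only points to check are that the grid is planar, so that \cref{KorhonenInducedGrid} applies, and that excluding an induced minor passes to induced subgraphs. Both are immediate.
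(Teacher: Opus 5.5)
Your proposal is correct and is exactly the paper's argument: the paper obtains this lemma in one line by combining \cref{GridLikeMinorDegree} with \cref{KorhonenInducedGrid}, applied to the planar $(k\times k)$-grid with $\Delta=2t$. Your explicit choice $f(k,t)=2c(k,t)+5$ and your remark that excluding an induced minor passes to induced subgraphs supply the details the paper leaves implicit.
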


The next theorem is an induced version of the main theorem of \citet{reed2012polynomial}. We postpone its proof to the next section.

\begin{restatable}{thm}{TreeAlphaGridLikeMinor}\label{TreeAlphaGridLikeMinor}
	Let $\GG$ be an $\alpha$-Menger bounded class of graphs with $\alpha$-Menger binding function $f_{\alpha}$. There exists a function $f$ (which depends on $f_{\alpha}$) such that, for all $t,\ell\in \NN$, every $K_{1,t}$-free graph $G\in \GG$ with no induced grid-like minor of order $\ell$ has $\atw(G) < f(t,\ell)$.
\end{restatable}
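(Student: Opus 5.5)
We would follow the strategy of \citet{reed2012polynomial}, replacing "size" by "independence number" and vertex-disjoint paths by anti-complete induced paths. The starting point is an $\alpha$-analogue of bramble duality: if $\atw(G)$ is large, then $G$ has a bramble $\mathcal{B}$ whose $\alpha$-order (the minimum of $\alpha(S)$ over hitting sets $S$) is large. Equivalently, there is a set $W$ that is highly $\alpha$-linked: for every $X$ with small $\alpha(X)$, some component of $G-X$ contains most of $W$ in an $\alpha$-sense. We would obtain this by the standard well-linked-set or separation argument. If every such $W$ admitted a balanced separator of small independence number, we could recursively build a tree-decomposition whose bags are unions of boundedly many such separators, so $\atw(G)$ would be bounded.

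Next we build many anti-complete induced paths between well-connected regions. Inside the bramble we pick disjoint connected subgraphs $C_1,\dots,C_m$, each touching all the others, that are pairwise far apart in the $\alpha$-separation sense. Using $K_{1,t}$-freeness, any induced path $P$ has $\alpha(N_G[P])\le t\,\alpha(P)$, so we may replace each connected piece by an induced path (for example a geodesic through it) together with its closed neighbourhood. Because the bramble has large $\alpha$-order, $\asep(G,N_G[P_i],N_G[P_j])$ is large for suitable paths $P_i,P_j$ in two different pieces. $\alpha$-Menger boundedness then yields $k$ pairwise anti-complete $(N_G[P_i],N_G[P_j])$-paths, which we shortcut to induced paths.

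Finally we follow the Reed--Wood combinatorics. Fix a family $\mathcal{P}$ of many pairwise anti-complete induced paths linking two regions $A$ and $B$. Then find a second family $\mathcal{Q}$ of pairwise anti-complete induced paths, each touching many members of $\mathcal{P}$ and arranged so that the touching graph of $\mathcal{P}\cup\mathcal{Q}$ has a large complete minor. Reed and Wood do this by showing that if no such $\mathcal{Q}$ exists, one obtains a small separator; here that separator would have small $\alpha$, contradicting the bramble order. Bipartiteness of the touching graph requires the members of $\mathcal{Q}$ to be pairwise anti-complete and also to avoid each other's neighbourhoods, which is handled by routing $\mathcal{Q}$ in $G-N_G[\,\cdot\,]$ of the already chosen pieces. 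That removal costs only bounded $\alpha$ per path, again by $K_{1,t}$-freeness.

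The main obstacle is this last step. The Reed--Wood argument repeatedly uses Menger between arbitrary sets and tracks cardinalities, whereas the hypothesis only provides Menger between neighbourhoods of induced paths. Every separator and every "hitting" argument therefore has to be recast so that the relevant sides are $N_G[P]$ for induced paths $P$, with each recast step bounded via $\alpha(N_G[P])\le t\,\alpha(P)$. We would first try to keep all intermediate sets of the form $N_G[\text{union of boundedly many induced paths}]$, and to use Ramsey (\cref{RamseyTheorem}) to extract large anti-complete subfamilies whenever touching patterns need cleaning.
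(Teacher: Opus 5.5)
You have the right endpoints (bramble duality at the start, $\alpha$-Menger plus Reed--Wood combinatorics at the end), but the two steps that carry the proof are missing.

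\textbf{First gap: the paths fed to $\alpha$-Menger.} You need $\ell$ pairwise anti-complete \emph{induced} paths $P_1,\dots,P_\ell$ with $\asep(G,N_G[P_i],N_G[P_j])$ large for all $i\neq j$. Replacing each connected piece by a geodesic through it does not give this. An arbitrary induced path inside a bramble element can have a closed neighbourhood meeting only a sub-bramble of small $\alpha$-order: a single vertex $v$ gives $\asep(G,N_G[v],N_G[P_j])\leq\alpha(N_G[v])\leq t$. The bound $\alpha(N_G[P])\leq t\,\alpha(P)$ does not help here.

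The missing idea is \cref{PathBramble}: a \emph{strong} (pairwise intersecting) bramble $\mathcal{B}$ admits a single induced path $P=(v_1,\dots,v_n)$ with $N_G[P]$ meeting every element of $\mathcal{B}$. So you need the strong version of duality, \cref{BrambleDual}. You then chop $P$ greedily. Let $\mathcal{B}_{a,b}$ be the elements meeting $N_G[P_{a,b}]$; cut at the least $a$ with $\alpha(\mathcal{B}_{1,a-2})\geq k$ and discard $v_{a-1}$, so consecutive pieces are anti-complete because $P$ is induced. $K_{1,t}$-freeness bounds the overshoot by $\alpha(N_G[\{v_{a-2},v_{a-1}\}])\leq 2t$, so $\alpha(\mathcal{B})\geq\ell(k+2t)$ yields $\ell$ pieces. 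Large separation is then immediate: a set $S$ with $\alpha(S)<k$ misses an element meeting $N_G[P_i]$ and an element meeting $N_G[P_j]$, and these two intersect.

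\textbf{Second gap: the final step.} You leave it as the main obstacle and attribute to Reed and Wood a separator argument they do not use. No further separators are needed. Apply $\alpha$-Menger once to each pair $\{i,j\}$ to get a family $\mathcal{Q}_{i,j}$ of $\lceil 4e\binom{\ell}{2}d(\ell)\rceil$ pairwise anti-complete induced $(N_G[P_i],N_G[P_j])$-paths. Then there are two cases:
\begin{enumerate}
\item Some $\mathcal{Q}_{i,j}\cup\mathcal{Q}_{a,b}$ has a touching graph with a $K_\ell$ minor. This is already an induced grid-like minor, since that touching graph is bipartite.
\item Otherwise all these bipartite touching graphs are $d(\ell)$-degenerate by \cref{ExtremalMinor}. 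Then \cref{IndTransversal} picks one path from each $\mathcal{Q}_{i,j}$ so that the chosen paths are pairwise anti-complete. Together with $P_1,\dots,P_\ell$ they form a family whose touching graph is bipartite and contains the $1$-subdivision of $K_\ell$.
\end{enumerate}
So the hypothesis is only ever applied to neighbourhoods of induced paths in $G$ itself, exactly as provided, and your worry about recasting every separator disappears. Your proposed alternative of routing $\mathcal{Q}$ inside $G-N_G[\cdot]$ would not be licensed anyway: $\mathcal{G}$ is not assumed closed under induced subgraphs, and neighbourhoods change in the subgraph.
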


\cref{AlphaMengerTreeInd} now follows from \cref{GridtoGridlike,TreeAlphaGridLikeMinor}.

\section{Induced Grid-Like Minors}\label{SecIndGridMinor}
This section is devoted to proving \cref{TreeAlphaGridLikeMinor}. Our argument adapts the main ideas of \citet{reed2012polynomial} to the induced setting. 

We begin by introducing strong brambles. For a graph $G$, a bramble $\mathcal{B}$ is \defn{strong} if its elements pairwise intersect. The \defn{$\alpha$-order $\alpha(\mathcal{B})$} of $\mathcal{B}$ is the minimum independence number of a hitting set of $\mathcal{B}$. Similar to treewidth, strong brambles are a dual object to tree independence number.

\begin{thm}[\cite{CHMW2025wheel}]\label{BrambleDual}
    For every graph $G$ and $k\in \NN$:
    \begin{enumerate}
        \item If $G$ contains a strong bramble $\BB$ with $\alpha(\BB)\geq k$, then $\atw(G)\geq k$.
        \item If $\atw(G)\geq 4k-2$, then $G$ contains a strong bramble $\BB$ with $\alpha(\BB)\geq k$.
    \end{enumerate}
\end{thm}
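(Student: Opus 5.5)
For the first part I would use the Helly property of subtrees. Let $(B_x\colon x\in V(T))$ be any tree-decomposition of $G$ and let $\BB$ be a strong bramble with $\alpha(\BB)\geq k$. For each $H\in\BB$, the set $T_H:=\{x\in V(T)\colon B_x\cap V(H)\neq\emptyset\}$ induces a subtree of $T$, because $H$ is connected and each vertex occupies a subtree. Since the elements of $\BB$ pairwise \emph{intersect*}, any two of these subtrees share a node: a common vertex $v\in V(H)\cap V(H')$ has its whole subtree inside $T_H\cap T_{H'}$. Pairwise intersecting subtrees of a tree have a common node $x$. Then $B_x$ meets every element of $\BB$, so $B_x$ is a hitting set and $\alpha(B_x)\geq\alpha(\BB)\geq k$. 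As the decomposition was arbitrary, $\atw(G)\geq k$.

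For the second part I would argue the contrapositive, in the style of Robertson--Seymour. Assume $G$ has no strong bramble of $\alpha$-order at least $k$. I would prove by induction on $|V(G)|$ (or on $|V(G)\setminus W|$) the stronger statement: for every $W\subseteq V(G)$ with $\alpha(W)\leq 3k-2$, there is a tree-decomposition of $G$ in which every bag has independence number at most $4k-3$ and some bag contains $W$. This gives $\atw(G)\leq 4k-3<4k-2$.

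The key step is a balanced separator. Consider all $X\subseteq V(G)$ with $\alpha(X)\leq k-1$. Call a component $C$ of $G-X$ \emph{big} if $\alpha(C\cap W)\geq k$.

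First suppose some such $X$ leaves no big component. Put $W\cup X$ in a root bag; its independence number is at most $3k-2+k-1=4k-3$. For each component $C$ of $G-X$, apply induction to the graph $G[C\cup N(C)]$ with the new set $W_C:=(W\cap C)\cup N_X(C)$, which satisfies $\alpha(W_C)\leq (k-1)+(k-1)\leq 3k-2$. Glue the resulting decompositions onto the root along the bags containing $W_C$. The induction needs $G[C\cup N(C)]$ to be a proper induced subgraph, and it inherits the bramble-free hypothesis. The degenerate cases, for instance when $\alpha(W)$ is small, are handled by first padding $W$ with a vertex outside it.

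Otherwise every $X$ leaves a big component. The thresholds must then be arranged so that this big component is unique. This may require the more refined condition $\alpha(C\cap W)>\frac12(\alpha(W)-(k-1))$ rather than $\geq k$, together with taking $\alpha(W)=3k-2$ exactly. Call this unique component $C_X$. I claim $\{C_X\}$ is a strong bramble. For two separators $X$ and $Y$, the set $(C_X\cap W)\setminus Y$ still has large independence number. It is covered by components of $G-Y$, so a piece of it must lie in the unique big component $C_Y$, giving $C_X\cap C_Y\neq\emptyset$. If $S$ were a hitting set with $\alpha(S)\leq k-1$, then $C_S$ would avoid $S$, a contradiction. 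Hence the bramble has $\alpha$-order at least $k$, contradicting the assumption.

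I expect the main obstacle to be calibrating these thresholds. Uniqueness of the big component and the "big minus $Y$ remains big inside $C_Y$" step must both hold, while the recursive sets $W_C$ must still satisfy $\alpha(W_C)\leq 3k-2$. Independence number is only subadditive, which is exactly what forces the constants $3k-2$ and $4k-3$.
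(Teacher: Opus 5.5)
The paper only quotes this theorem and gives no proof of it, so your proposal has to stand on its own. Your part (1) is correct: the Helly property for the subtrees $T_H$ gives a bag meeting every element of $\BB$.

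Part (2) has the right skeleton: separators of independence number at most $k-1$, induction with $\alpha(W)\le 3k-2$, and bags of independence number at most $4k-3$. However, your threshold for ``big'' does not work, and the step ``$(C_X\cap W)\setminus Y$ has large independence number, so a piece of it lies in $C_Y$'' is invalid. Independence number is additive over the components of $G-Y$, so a set of large $\alpha$ can be spread over many components, none of which is big.

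Your refined threshold does not help either. For $\alpha(W)=3k-2$ we have $\frac12(\alpha(W)-(k-1))=k-\frac12$, so it is the same condition as $\alpha(C\cap W)\ge k$. Here is a concrete failure:
\begin{itemize}
\item Let $G$ be the path $v_1v_2\cdots v_7$, let $k=2$, and let $W=\{v_1,v_3,v_5,v_7\}$, so $\alpha(W)=4=3k-2$.
\item Every $X$ with $\alpha(X)\le 1$ is empty, a vertex, or an edge. It contains at most one vertex of $W$, so some component of $G-X$ contains two non-adjacent vertices of $W$ and is big. Thus you are in your second case.
\item Yet $X=\{v_4\}$ leaves two big components, so uniqueness fails.
\item Moreover $\atw(G)=1$, so by part (1) $G$ has no strong bramble of $\alpha$-order $2$ at all.
\end{itemize}

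The missing idea is to play independent sets from \emph{two} disjoint big components against $\alpha(W)$. This forces the threshold above $\frac12(\alpha(W)+(k-1))$, with a plus rather than a minus. Call $C$ big if $\alpha(W\cap C)\ge 2k-1$.
\begin{itemize}
\item Suppose $C_X\cap C_Y=\emptyset$. Every vertex outside $C_Y$ with a neighbour in $C_Y$ lies in $Y$, so $C_X\setminus Y$ is anticomplete to $C_Y$. Take maximum independent sets $J_X$ of $W\cap C_X$ and $J_Y$ of $W\cap C_Y$. Then $(J_X\setminus Y)\cup J_Y$ is an independent subset of $W$ of size at least $(2k-1)-(k-1)+(2k-1)=3k-1>\alpha(W)$, a contradiction.
\item Taking $X=Y$ in the same argument gives uniqueness of the big component.
\item In the other case every component has $\alpha(W\cap C)\le 2k-2$. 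Then $\alpha(W_C)\le 3k-3$ and the root bag has $\alpha(W\cup X)\le 4k-3$.
\item After padding to $\alpha(W)=3k-2$ you get $C\cup N(C)\neq V(G)$, since otherwise $\alpha(W)\le 3k-3$. So the induction on $(|V(G)|,|V(G)\setminus W|)$ makes progress.
\end{itemize}
With this threshold your argument goes through and gives exactly the bound $4k-2$ of the statement.
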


The following result relates strong brambles to induced paths.

\begin{lem}[\cite{CHMW2025wheel}]\label{PathBramble}
    For every strong bramble $\mathcal{B}$ in a graph $G$, there exists an induced path $P$ in $G$ such that $N_G[P]$ intersects every element of $\mathcal{B}$.
\end{lem}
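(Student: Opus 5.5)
The plan is to build $P$ greedily as a shortest path that touches all bramble elements. Two approaches seem natural.

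\emph{First attempt (chain of shortest paths).} Let $\mathcal{B}=\{B_1,\dots,B_m\}$ and order the elements arbitrarily. Since the elements pairwise intersect, $\bigcup\mathcal{B}$ is connected. Start with a vertex $v_1\in B_1$. Then repeatedly extend the current path by a shortest path to the next unhit element. The trouble is that a concatenation of shortest paths need not be induced. Shortcutting chords could destroy the property that $N_G[P]$ meets each element. So I would instead do a one-shot construction.

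\emph{Main construction.} I would use a minimality/extremal argument. Among all induced paths $P$ in $H:=G[\bigcup\mathcal{B}]$, choose one maximising the number of elements of $\mathcal{B}$ that $N_G[P]$ intersects. Suppose some $B\in\mathcal{B}$ is missed, so $B$ is anticomplete to $P$ and disjoint from it. Let $P=(p_1,\dots,p_r)$.

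\begin{itemize}
\item[(i)] Every element $B'$ hit by $N[P]$ intersects $B$, since the bramble is strong. So $B'\cup B$ is connected.
\item[(ii)] Take a shortest path $Q$ in $G[B\cup N[P]\cup\ldots]$ from an endpoint region of $P$ to $B$. More simply, take a shortest path $Q$ in $H$ from $V(P)$ to $B$, with $Q=(q_0,q_1,\dots,q_s)$, $q_0\in V(P)$ and $q_s\in B$. By minimality, $q_1,\dots,q_s$ have no neighbours in $P$ except possibly $q_1$. Also $Q-q_0$ is induced.
\item[(iii)] The vertex $q_1$ has neighbours on $P$. Let $p_i$ and $p_j$ be its first and last neighbours along $P$. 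Form the new path $P'=p_1\dots p_i\,q_1\dots q_s$, or the analogous path using the end $p_j\dots p_r$. Each is an induced path, because $q_1$ is adjacent only to $p_i$ within $p_1\dots p_i$.
\end{itemize}

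The difficulty is that $P'$ loses the segment $p_{i+1}\dots p_r$, together with whatever elements only that segment's closed neighbourhood hit. To handle this, I would strengthen the extremal choice. I would choose $P$ with one endpoint $p_r$ fixed, or choose $P$ to be a shortest path. The key point of the strong bramble is that every element hit by the discarded segment also meets $B$.

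\emph{Cleaner route and the main obstacle.} I would therefore choose $B_0\in\mathcal{B}$ and take $P$ to be a \emph{shortest} path in $H$ between two vertices, arranged so that every element is touched. The step I expect to be the main obstacle is exactly the loss of hit elements under the rerouting in (iii). First I would try the following. Fix a vertex $u\in B_1$. Take $P$ to be a geodesic in $H$ from $u$ to a vertex $w$ chosen to maximise $\dist_H(u,w)$ over $w\in\bigcup\mathcal{B}$, and argue via the BFS layering from $u$. Each $B\in\mathcal{B}$ is connected and meets $B_1$, and $B_1$ meets every other element. Geodesics are induced, so it remains to show that each $B$ contains a vertex within distance $1$ of $P$. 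If that fails, I would replace the BFS argument by induction on $|\mathcal{B}|$. In the induction, apply the statement to the strong bramble $\{B\cap B_1' : \ldots\}$ restricted to a suitable subgraph, keeping $B_1$ as an anchor.
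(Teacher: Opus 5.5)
Your proposal never resolves the step you yourself flag as the main obstacle, so as it stands it is not a proof.

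In (i)--(iii), $Q$ is an arbitrary shortest $V(P)$--$B$ path, so $q_1$ may attach to the interior of $P$. The rerouted path $P'$ then drops a segment of $P$, and that segment may be the only part of $P$ whose closed neighbourhood meets some elements. Your remark that those elements also meet $B$ does not help: near $B$, $N_G[P']$ only gains $N_G[q_s]$, not all of $B$.

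The geodesic/BFS route fails as stated. Take the tree with edges $va_1, a_1a_2, a_2a_3, a_2x_1, x_1x_2, x_2x_3$. The sets $\{v\}$, $\{v,a_1,a_2,a_3\}$ and $\{v,a_1,a_2,x_1,x_2,x_3\}$ form a strong bramble. Take $B_1=\{v,a_1,a_2,a_3\}$ and $u=a_3$. The unique farthest vertex is $x_3$ (distance $4$, while $v$ is at distance $3$), so the geodesic is $a_3a_2x_1x_2x_3$. Its closed neighbourhood misses $v$, hence misses the element $\{v\}$.

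The fallback induction is not well defined either. Intersections of two elements need not be connected, so the proposed family is not a bramble.

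The missing idea is to make the extremal choice produce an element private to an \emph{end} of $P$, and to route towards $B$ through that element, so that no vertex of $P$ is ever discarded. Choose an induced path $P=p_1\dots p_r$ maximising the number of elements met by $N_G[P]$ and, subject to this, with $r$ minimum. By minimality there is $B'\in\mathcal{B}$ that meets $N_G[p_r]$ but is disjoint from $N_G[\{p_1,\dots,p_{r-1}\}]$; if $r=1$, take any element met by $N_G[p_1]$.

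Suppose some $B$ is missed. Then $B'\cup B$ is connected (as $B\cap B'\neq\emptyset$), disjoint from $N_G[\{p_1,\dots,p_{r-1}\}]$, and contains $p_r$ or a neighbour of $p_r$. Let $Q$ be a shortest path from $p_r$ to $B$ in $G[\{p_r\}\cup B'\cup B]$. Then $Q$ is induced, and all its vertices other than $p_r$ are anti-complete to $\{p_1,\dots,p_{r-1}\}$. Hence $p_1\dots p_{r-1}Q$ is an induced path whose closed neighbourhood contains $N_G[P]$ and a vertex of $B$, contradicting maximality. This argument only uses that $B$ and $B'$ touch.

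Your suggestion of choosing $P$ to be shortest points in this direction. What is missing is this end-private rerouting step.
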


The next lemma breaks the path given by the above lemma into many pairwise anti-complete paths with high $\alpha$-separations between their neighbourhoods. Note that \citet{CW2025Ladder} proved the $\ell=2$ case of the following lemma.

\begin{lem}\label{AntiPathsBramble}
    For all $k,\ell,t\in \NN$ with $k> 2t$, every $K_{1,t}$-free graph $G$ with $\atw(G)\geq 4\ell(k+2t)-2$ contains $\ell$ pairwise anti-complete induced paths $P_1,\dots,P_{\ell}$ such that, for all distinct $i,j\in [\ell]$, we have $\asep(G,N_G[P_i],N_G[P_j])\geq k$.
\end{lem}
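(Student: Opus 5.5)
We apply \cref{BrambleDual} with parameter $m:=\ell(k+2t)$: since $\atw(G)\geq 4m-2$, we obtain a strong bramble $\BB$ with $\alpha(\BB)\geq \ell(k+2t)$. \Cref{PathBramble} then gives an induced path $P=(v_1,\dots,v_n)$ with $N_G[P]$ meeting every element of $\BB$. The plan is to cut $P$ into $\ell$ consecutive subpaths $Q_1,\dots,Q_\ell$, separated by short gaps, so that each $Q_i$ on its own ``dominates'' a large-$\alpha$-order portion of $\BB$. Small separators between different pieces will then contradict the $\alpha$-order of $\BB$.

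\textbf{Step 1: cutting $P$.} For a subpath $Q$ of $P$, let $\BB_Q$ be the set of elements of $\BB$ that meet $N_G[Q]$. Define $\mu(Q)$ to be the minimum $\alpha$ of a set hitting $\BB_Q$. Greedily let $Q_1$ be the shortest initial segment of $P$ with $\mu(Q_1)\geq k+2t$. Discard the next vertex of $P$ (or the next two vertices, as needed for anti-completeness). Then continue along $P$ to find $Q_2$, and so on.

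Since $P$ is induced, consecutive pieces separated by at least one deleted vertex are anti-complete; non-consecutive pieces are automatically anti-complete. We must also check that we do not run out of path. Suppose only $j<\ell$ pieces can be built. Each completed piece $Q_i$ minus its last vertex has $\mu<k+2t$. Adding back the last vertex $v$ contributes at most $\alpha(N_G[v])\leq t$, since $N_G[v]$ hits everything touching $N_G[v]$'s contribution. Each deleted gap vertex contributes at most $t$ more, and the final leftover segment has $\mu<k+2t$. So the union of hitting sets hits all of $\BB$ with $\alpha$ at most about $\ell(k+2t)$ with appropriate slack. This contradicts $\alpha(\BB)\geq \ell(k+2t)$. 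The constants will need tuning: the budget per piece is exactly $k+2t$, chosen so that a piece plus its boundary vertices stays below the target. We may cut at $\mu\geq k+t$ or similar to make the bookkeeping close.

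\textbf{Step 2: separation.} Suppose $X$ is an $(N_G[Q_i],N_G[Q_j])$-separator with $\alpha(X)<k$. The elements of $\BB_{Q_i}$ and $\BB_{Q_j}$ pairwise intersect and are connected. An element $B\in\BB_{Q_i}$ avoiding $X$ contains a vertex of $N_G[Q_i]$, and likewise for $B'\in\BB_{Q_j}$. Since $B\cup B'$ is connected, it contains an $(N_G[Q_i],N_G[Q_j])$-path avoiding $X$. The only exception is when the endpoints are already in $N_G[Q_i]\cap N_G[Q_j]$, which such a path must still hit, so it is fine. Hence either $X$ hits all of $\BB_{Q_i}$ or $X$ hits all of $\BB_{Q_j}$, so $\alpha(X)\geq k+2t>k$, a contradiction.

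A subtlety is that $X$ might meet $N_G[Q_i]$ itself. Our definition of path handles this: any vertex in $N_G[Q_i]\cap N_G[Q_j]$ not in $X$ is a trivial path. Thus $\asep\geq k$ follows, and the hypothesis $k>2t$ likely enters precisely here or in the slack of Step 1.

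The main obstacle is the accounting in Step 1: showing $\mu$ is roughly subadditive along $P$ with an additive loss of at most $t$ per cut vertex, so that $\ell$ pieces fit into an $\alpha$-order of $\ell(k+2t)$. I would first verify that the sets hitting each segment's bramble, together with $N_G[v]$ for cut vertices, cover every element of $\BB$ because $N_G[P]$ meets every element.
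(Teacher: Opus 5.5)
\textbf{Gap in Step 1.} Your strategy is the paper's: strong bramble from \cref{BrambleDual}, induced path from \cref{PathBramble}, greedy cutting with gaps, subadditivity of the $\alpha$-order with a loss of at most $t$ per boundary vertex, and the intersecting-bramble argument for separation. Step 2 is fine. But Step 1 does not close with the threshold you chose, and the fixes you suggest do not close it either.

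If you cut as soon as $\mu(Q_i)\geq k+2t$, then $Q_i$ minus its last vertex has $\mu\leq k+2t-1$. Adding back the last vertex and the discarded gap vertex costs up to $2t$ more, so each chunk can use up to $k+4t-1$ of the $\alpha$-order. After $\ell-1$ chunks the tail is only guaranteed $\alpha$-order $\ell(k+2t)-(\ell-1)(k+4t-1)=k+2t-(\ell-1)(2t-1)$. This is below your threshold $k+2t$ whenever $\ell\geq 2$, so the $\ell$-th piece is not guaranteed to exist. Cutting at $k+t$ gives chunks costing up to $k+3t-1$, and the same accounting fails, e.g.\ for $\ell\geq 3$ and $t\geq 3$. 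Discarding two gap vertices only adds another $t$ per chunk.

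\textbf{Fix.} Step 2 needs only $\mu(Q_i)\geq k$: a separator $X$ with $\alpha(X)\leq k-1$ then misses some element of $\mathcal{B}_{Q_i}$ and some element of $\mathcal{B}_{Q_j}$. So cut as soon as $\mu$ reaches $k$, and discard exactly one vertex. Each chunk then costs at most $(k-1)+t+t<k+2t$. By subadditivity, after $i$ chunks the tail has $\alpha$-order greater than $(\ell-i)(k+2t)$, so all $\ell$ pieces exist, and the run-out-of-path case gives a contradiction as you intended.

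This is exactly the paper's bookkeeping. It chooses $a_i$ minimal with $\alpha(\mathcal{B}_{a_{i-1},a_i-2})\geq k$ and deduces $\alpha(\mathcal{B}_{a_{i-1},a_i-1})<k+2t$. The per-piece budget $k+2t$ in the hypothesis is precisely the cut threshold $k$ plus the $2t$ lost at the two boundary vertices.

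The hypothesis $k>2t$ plays no role in Step 2. In the paper it only ensures that the minimal cut index is large enough for the preceding segment $\mathcal{B}_{1,a_1-3}$ to be non-empty.
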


\begin{proof}
    Let $k, \ell, t$, and $G$ be as in the lemma statement. 
    By \cref{BrambleDual}, $G$ contains a strong bramble $\BB$ with $\alpha(\BB) \geq \ell(k+2t)$. Let $P=(v_1,\dots,v_n)$ be the induced path guaranteed by \cref{PathBramble}. For $a,b\in [n]$ with $a\leq b$, let $P_{a,b}$ denote the $(v_a,v_b)$-subpath of $P$, and define $\mathcal{B}_{a,b}:=\{B\in \mathcal{B}\colon V(B)\cap N_G[P_{a,b}]\neq \emptyset\}$. Observe that each $\mathcal{B}_{a,b}$ is a strong bramble in $G$. 

    Choose $a_1\in \{3,\dots,n\}$ minimal such that $\alpha(\mathcal{B}_{1,a_1-2})\geq k$. 
    Since $\alpha(\mathcal{B}_{1,2})\leq \alpha(N_G[\{v_1,v_2\}])\leq 2t<k$, we have $a_1\geq 4$. By minimality, $\alpha(\mathcal{B}_{1,a_1-3})<k$, so there exists a hitting set $S'\subseteq V(G)$ of $\mathcal{B}_{1,a_1-3}$ with $\alpha(S')<k$. Since $S'\cup N_G[\{v_{a_1-1},v_{a_1-2}\}]$ is a hitting set of $\mathcal{B}_{1,a_1-1}$, we have $\alpha(\mathcal{B}_{1,a_1-1})<k+2t$ (by $K_{1,t}$-freeness). Consequently, 
    $$\alpha(\mathcal{B}_{a_1,n})\geq \alpha(\mathcal{B}_{1,n})-\alpha(\mathcal{B}_{1,a_1-1})> \ell(k+2t)-(k+2t)\geq (\ell-1)(k+2t).$$
    
    Repeating this argument, we obtain a sequence of integers $a_0=1,a_1,\dots,a_{\ell}\leq n$ with $a_{i-1}\leq a_i-2$ for all $i\in [\ell]$, such that, $\alpha(\mathcal{B}_{a_{i-1},a_i-2})\geq k$ for each $i\in [\ell]$. 
    For each $i\in [\ell]$, set $P_i:=P_{a_{i-1},a_i-2}$. Since $P$ is induced, the paths $P_1,\dots,P_{\ell}$ are pairwise anti-complete.
    
    Suppose for contradiction that some distinct $i,j\in [\ell]$ admit an $(N_G[P_i],N_G[P_j])$-separator $S\subseteq V(G)$ with $\alpha(S)\leq k-1$. Since $\alpha(\mathcal{B}_{a_{i-1},a_{i}-2})\geq k$ and $\alpha(\mathcal{B}_{a_{j-1},a_{j}-2})\geq k$, there exist $B_i\in \mathcal{B}_{a_{i-1},a_i-2}$ and $B_j\in \mathcal{B}_{a_{j-1},a_j-2}$ such that $V(B_i)\cap S=V(B_j)\cap S=\emptyset$. Because $B_i$ and $B_j$ intersect, $G[B_i \cup B_j]$ contains an $(N_G[P_i],N_G[P_j])$-path which avoids $S$, contradicting the assumption on $S$. Thus, $\asep(G,N_G[P_i],N_G[P_j])\geq k$, as required.
\end{proof}

We need the following independent transversal lemma of \citet{reed2012polynomial}.

\begin{lem}[\cite{reed2012polynomial}]\label{IndTransversal}
    Let $r,d\in \NN$ and $V_1,\dots, V_r$ be the colour classes in an $r$-colouring of a graph $H$. Suppose that $|V_i|\geq 4e(r-1)d$ for all $i\in [r]$, and $H[V_i\cup V_j]$ is $d$-degenerate for all distinct $i,j\in [r]$. Then there exists an independent set $\{x_1,\dots,x_r\}$ of $H$ such that $x_i\in V_i$ for each $i\in [r]$.
\end{lem}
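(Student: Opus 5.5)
The plan is to prove the lemma with the Lov\'asz Local Lemma, in its asymmetric form. Set $s:=\ceil{4e(r-1)d}$. First I will shrink every $V_i$ to a subset of size exactly $s$. This is harmless: degeneracy passes to induced subgraphs, and an independent transversal of the shrunken classes is one of the original classes. Edges inside a single $V_i$ may be ignored, since $V_i$ is a colour class and so has none.

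Next I pick $x_i\in V_i$ uniformly at random, independently over $i\in[r]$. For each edge $uv$ with $u\in V_i$ and $v\in V_j$ ($i\neq j$), let $A_{uv}$ be the event that $x_i=u$ and $x_j=v$. Then $\Pr(A_{uv})=1/s^2$, and $\{x_1,\dots,x_r\}$ is independent exactly when no $A_{uv}$ occurs. The event $A_{uv}$ is determined by $x_i$ and $x_j$. So it is mutually independent of all events $A_{u'v'}$ whose edge $u'v'$ has no end in $V_i\cup V_j$, and its dependency set $\Gamma(A_{uv})$ consists of the events whose edge meets $V_i\cup V_j$.

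The key counting step uses degeneracy. Since $H[V_i\cup V_k]$ is $d$-degenerate, it has at most $d(|V_i|+|V_k|)=2ds$ edges. Summing over the $r-1$ indices $k\neq i$, the edges with an end in $V_i$ carry total probability at most
\[
(r-1)\cdot 2ds\cdot \frac{1}{s^2}=\frac{2d(r-1)}{s}\leq \frac{1}{2e}.
\]
Hence $\sum_{B\in\Gamma(A)}\Pr(B)\leq 1/e$ for every event $A$, counting separately the edges touching $V_i$ and those touching $V_j$. This replaces the bounded-degree hypothesis of the symmetric Local Lemma: degeneracy bounds edge counts between pairs of classes, not individual degrees, which is why the weighted form is needed.

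I then apply the asymmetric Local Lemma with weights $x_B:=c\Pr(B)$ for a constant $c$. Each weight is tiny, so $\prod_{B\in\Gamma(A)}(1-x_B)\geq \exp\bigl(-(1+o(1))\,c\sum_{B\in\Gamma(A)}\Pr(B)\bigr)\geq \exp(-(1+o(1))c/e)$. It therefore suffices that $c\,e^{-(1+o(1))c/e}\geq 1$, and $c=e$ gives $e\cdot e^{-(1+o(1))}$. This is the main obstacle: with exactly the constant $4e$ the inequality is borderline. I would try first to recover slack from the overcounting above. An edge between $V_i$ and $V_j$ is counted from both sides, including the event $A$ itself, and the bound $d(|V_i|+|V_k|)$ can be sharpened to $d(|V_i|+|V_k|)-\binom{d+1}{2}$. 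If that is not enough, I would use the lopsided or cluster-expansion form of the Local Lemma, which works under a condition of the type $\sum_{B\in\Gamma(A)}\Pr(B)\leq 1/e$ on the dependency neighbourhood. Either way the Local Lemma gives a positive probability that no $A_{uv}$ occurs, and this yields the required independent transversal.
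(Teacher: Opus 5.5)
The paper gives no proof of this lemma; it quotes it from Reed and Wood. Your setup is the right one: shrink the classes to size $s=\ceil{4e(r-1)d}$, pick a uniform random transversal, take one bad event $A_{uv}$ per edge, and use degeneracy to bound the edges between two classes by $2ds$. The constant $4e(r-1)d$ is exactly what the symmetric Local Lemma gives for this random transversal. The gap is that you stop at the last step: you call the Local Lemma inequality borderline and offer only speculative remedies. There is no obstacle there, and the diagnosis is mistaken. The symmetric Local Lemma needs a bound on the number of \emph{events} each event depends on, not on degrees in $H$. The events $A_{uv}$ depends on are those whose edge meets $V_i\cup V_j$, and your edge count bounds exactly their number. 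Since every event has the same probability $p=1/s^2$, the weighted form buys nothing.

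Your first remedy is the right one, and it already suffices. The sum over edges with an end in $V_i$ plus edges with an end in $V_j$ includes the edge $uv$ itself. So with $D:=|\Gamma(A_{uv})|$, you have in fact shown $D+1\le 4(r-1)ds$, that is, $ep(D+1)\le 4e(r-1)d/s\le 1$. This is precisely the hypothesis of the symmetric Local Lemma.

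If you prefer to verify it through the asymmetric form with your own weights $x_B=e\Pr(B)=ep$, use an exact inequality instead of $1-x\ge e^{-(1+o(1))x}$. You never quantify that error term, and $s$ can be as small as $11$. Since $ep\le 1/(D+1)$ and $\bigl(1-\frac{1}{D+1}\bigr)^{D}\ge e^{-1}$ for every $D\ge 0$,
\[
x_A\prod_{B\in\Gamma(A)}(1-x_B)=ep\,(1-ep)^{D}\ \ge\ ep\Bigl(1-\frac{1}{D+1}\Bigr)^{D}\ \ge\ ep\cdot e^{-1}=p=\Pr(A).
\]
So $c=e$ works as it stands. The apparent borderline came only from the lossy asymptotic estimate, together with not using the fact that $A$ was already counted. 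The other fallback is misdirected in any case. The lopsided Local Lemma weakens the independence requirement, which is not the issue here; it does not relax the numerical condition.
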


We also need the following extremal bound for $K_{\ell}$ minors.

\begin{thm}[\cite{kostochka1984average,thomason1984contraction,thomason2001extremal}]\label{ExtremalMinor}
    There exists a function $d(\ell)\in O(\ell \sqrt{\log(\ell)})$ such that, for every $\ell\in \NN$, every graph with no $K_{\ell}$ minor is $d(\ell)$-degenerate.
\end{thm}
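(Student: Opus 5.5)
We would first reduce degeneracy to an edge-density statement. Having no $K_\ell$ minor is inherited by subgraphs. So it suffices to find $c(\ell)$ such that every graph with no $K_\ell$ minor satisfies $|E(G)|\leq c(\ell)|V(G)|$. Every subgraph then has a vertex of degree at most $2c(\ell)$, and $G$ is $2c(\ell)$-degenerate. For the use made of this result in the paper, namely combining it with \cref{IndTransversal}, only the existence of some function $d(\ell)$ matters. So we would first record Mader's easy bound $|E(G)|\leq 2^{\ell-3}|V(G)|$ as a fallback, and then aim for the sharp $O(\ell\sqrt{\log \ell})$ bound.

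Mader's bound goes by induction on $\ell$, taking a minor-minimal counterexample $G$ with $|E(G)|> 2^{\ell-3}|V(G)|$. Deleting an edge must destroy the density condition, so $|E(G)|$ is essentially $\lfloor 2^{\ell-3}|V(G)|\rfloor+1$. Contracting an edge $uv$ loses $1+|N(u)\cap N(v)|$ edges, and minimality forces this loss to exceed $2^{\ell-3}$. Hence every edge lies in at least $2^{\ell-3}$ triangles. Take a vertex $v$ of minimum degree; the edge count gives $\deg(v)\leq 2^{\ell-2}$. Then $G[N(v)]$ has minimum degree at least $2^{\ell-3}$ on at most $2^{\ell-2}$ vertices, so it is dense enough for the induction hypothesis to give a $K_{\ell-1}$ minor inside $N(v)$. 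Adding $v$ completes a $K_\ell$ minor.

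For the sharp bound we would follow Kostochka and Thomason. Run the same minor-minimal argument with density threshold $D=C\ell\sqrt{\log\ell}$. This yields a minor $G'$ with at most about $2D$ vertices and minimum degree at least $D/2$, so $G'$ has edge density bounded below by a constant. We then find $K_\ell$ as a minor of $G'$ by choosing $\ell$ disjoint random vertex sets, each of size $s\approx c\sqrt{\log \ell}$. We need each set to be connected, or connectable through a few extra shared common neighbours, and every pair of sets to be joined by an edge. If the non-edge density of $G'$ is at most $p<1$, a fixed pair of sets fails to touch with probability about $p^{s^2}=\ell^{-\Theta(c^2)}$. This is below $\ell^{-2}$ for a suitable $c$, and a union bound over the $\binom{\ell}{2}$ pairs then succeeds.

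The main obstacle is this final probabilistic step. It must simultaneously guarantee internal connectivity of the branch sets and use only about $\ell\sqrt{\log\ell}$ vertices. The vertex budget is exactly what forces the density threshold to be $\Theta(\ell\sqrt{\log \ell})$. To handle connectivity, we would reserve a set of high-degree "connector" vertices and join each random set through them, as in Thomason's argument. If this proves too delicate, we would fall back on the $2^{\ell-3}$ bound, which already suffices for every subsequent argument in the paper.
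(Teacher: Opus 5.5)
The paper does not prove this statement. It cites it from Kostochka and Thomason, and the substance of that result is the bound $d(\ell)\in O(\ell\sqrt{\log\ell})$. That bound is where your proposal has a genuine gap.

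Your estimate that two random $s$-sets fail to touch with probability about $p^{s^2}$ is the heuristic for the random graph $G(n,1-p)$. For an arbitrary graph of non-edge density $p$ it is false, even with minimum degree about $n/2$, which is what your minimality reduction provides. Let $G'$ be two disjoint cliques on $n/2$ vertices each. Two random $s$-sets fail to touch whenever they lie in different cliques, which happens with probability about $2^{1-2s}$, not $2^{-s^2}$. For $s=c\sqrt{\log\ell}$ this is $\ell^{-o(1)}$. The expected number of non-touching pairs is then $\ell^{2-o(1)}$, so the union bound over $\binom{\ell}{2}$ pairs collapses. In this example, independent random sets with a union bound need $s=\Omega(\log\ell)$, which yields only an $O(\ell\log\ell)$ bound. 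Handling dense graphs that are not pseudorandom in this way is exactly the non-trivial content of the cited proofs. So is making the branch sets connected, which you leave to unspecified ``connector vertices''. Your proposal supplies neither.

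What you do establish, via Mader, is the existence of some $d(\ell)$. You are right that this is all the paper needs. The function $d(\ell)$ enters only through $k=\lceil 4e\binom{\ell}{2}d(\ell)\rceil$ in \cref{FindingGLMinor} and in the proof of \cref{TreeAlphaGridLikeMinor}, and no explicit dependence is claimed there. It is still strictly weaker than the statement as written.

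Your Mader sketch also needs a small repair. Under the hypothesis $|E(G)|>2^{\ell-3}|V(G)|$, minimum degree at least $2^{\ell-3}$ in $G[N(v)]$ only gives $|E(G[N(v)])|\ge 2^{\ell-4}|N(v)|$. This does not exceed the threshold your induction hypothesis for $\ell-1$ requires. A $2^{\ell-3}$-regular neighbourhood is the equality case, and the bound $|N(v)|\le 2^{\ell-2}$ does not help. Run the induction on the strict form instead: take a minimal minor with $|E|\ge 2^{\ell-3}|V|$. Minimality excludes isolated vertices and forces $|N(u)\cap N(v)|\ge 2^{\ell-3}$ for every edge $uv$. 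Then $G[N(v)]$ meets the non-strict threshold for $\ell-1$, and the induction closes.
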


We are now ready to construct our induced grid-like minors.

\begin{lem}\label{FindingGLMinor}
    Let $\ell\in \NN$ and $k:=\ceil{4e {\ell\choose 2} d(\ell)}$ where $d(\ell)$ is from \cref{ExtremalMinor}. Suppose a graph $G$ contains $\ell$ pairwise anti-complete induced paths $P_1,\dots,P_{\ell}$ such that, for all $i,j\in [\ell]$ with $i<j$, there is a collection $\mathcal{Q}_{i,j}$ of $k$ pairwise anti-complete induced $(N_G[P_i],N_G[P_j])$-paths in $G$. Then $G$ contains an induced grid-like minor of order $\ell$.
\end{lem}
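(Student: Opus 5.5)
We follow the Reed--Wood strategy: pick one connecting path for every pair $\{i,j\}$ so that connecting paths for different pairs are pairwise anti-complete. The grid-like minor then consists of the $P_i$ together with suitably shortened connecting paths.

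First, we may assume each $Q\in\mathcal{Q}_{i,j}$ meets $N_G[P_i]$ and $N_G[P_j]$ only in its endpoints; this holds by the definition of an $(N_G[P_i],N_G[P_j])$-path. Form an auxiliary graph $H$ with vertex set $\bigcup_{i<j}\mathcal{Q}_{i,j}$. Each $\mathcal{Q}_{i,j}$ is a colour class, so there are $r=\binom{\ell}{2}$ classes, each of size $k\geq 4e(r-1)d(\ell)$. Two paths from different classes are adjacent in $H$ if they touch; paths in the same class are pairwise anti-complete, so each class is independent. To apply \cref{IndTransversal} with $d=d(\ell)$, we must show that for distinct pairs $\{i,j\}\neq\{i',j'\}$ the graph $H[\mathcal{Q}_{i,j}\cup\mathcal{Q}_{i',j'}]$ is $d(\ell)$-degenerate. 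By \cref{ExtremalMinor}, it suffices that every subgraph of this bipartite touching graph has no $K_\ell$ minor.

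\textbf{Main obstacle: the degeneracy step.} The plan is to argue by contradiction. Suppose some subgraph of $H[\mathcal{Q}_{i,j}\cup\mathcal{Q}_{i',j'}]$ contains a $K_\ell$ minor. Then the collection of those paths is itself a family of induced paths whose touching graph is bipartite (each side is pairwise anti-complete) and has a $K_\ell$ minor. That is, it is already an induced grid-like minor of order $\ell$, and the lemma holds. So we may assume every such two-class subgraph is $K_\ell$-minor-free, hence $d(\ell)$-degenerate.

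With degeneracy in hand, \cref{IndTransversal} gives one path $Q_{i,j}\in\mathcal{Q}_{i,j}$ for each pair, with these paths pairwise anti-complete. Each $Q_{i,j}$ has its first vertex in $N_G[P_i]$ and its last in $N_G[P_j]$, and its interior avoids both neighbourhoods. Let $Q'_{i,j}$ be $Q_{i,j}$ with an end vertex removed whenever that end lies on $P_i$ or $P_j$ rather than in the open neighbourhood. Then $Q'_{i,j}$ is an induced path disjoint from $P_i$ and $P_j$ that touches both.

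Take $\mathcal{H}:=\{P_1,\dots,P_\ell\}\cup\{Q'_{i,j}\}$ with sides $X=\{P_i\}$ and $Y=\{Q'_{i,j}\}$. $X$ is pairwise anti-complete by hypothesis and $Y$ by the transversal, so the touching graph $J$ is bipartite. Each $Q'_{i,j}$ touches both $P_i$ and $P_j$, so contracting each $Q'_{i,j}$ into $P_i$ yields a $K_\ell$ minor in $J$ on branch sets $\{P_i\}\cup\{Q'_{i,j}:j>i\}$. Each branch set is connected in $J$, and branch sets $i<j$ are adjacent via the edge $Q'_{i,j}P_j$.

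Two points need checking here. If $Q_{i,j}$ has a single vertex, or two vertices lying on $P_i$ and $P_j$ respectively, then $Q'_{i,j}$ could be empty. The single-vertex case cannot end on both paths since $P_i$ and $P_j$ are anti-complete and disjoint. A single vertex in $N(P_i)\cap N(P_j)$ is fine. A two-vertex path with ends on $P_i$ and $P_j$ would make them adjacent, which is impossible. So $Q'_{i,j}$ is nonempty, and it touches both paths via the removed ends.
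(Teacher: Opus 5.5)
Your proposal is correct and matches the paper's proof. Both arguments stop early if some two-class touching graph $\mathcal{Q}_{i,j}\cup\mathcal{Q}_{a,b}$ has a $K_\ell$ minor. Otherwise they apply \cref{ExtremalMinor} and \cref{IndTransversal} to choose pairwise anti-complete representatives, and add $P_1,\dots,P_\ell$ to get a bipartite touching graph containing a subdivided $K_\ell$.

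Your truncation step is vacuous: by definition, an $(N_G[P_i],N_G[P_j])$-path with at least two vertices cannot have an end on $P_i$ or $P_j$, since the neighbouring vertex would also lie in that closed neighbourhood. The paper does not need the step in any case, because touching already allows intersection.
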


\begin{proof}
    For all distinct unordered pairs $\{i,j\},\{a,b\}\subseteq [\ell]$ with $i\neq j$, and $a\neq b$, let $H_{i,j,a,b}$ denote the touching graph of $\mathcal{Q}_{i,j}\cup \mathcal{Q}_{a,b}$. Each $\mathcal{Q}_{i,j}$ is a collection of pairwise anti-complete paths, so $H_{i,j,a,b}$ is bipartite. If any $H_{i,j,a,b}$ contains $K_{\ell}$ as a minor, then $\mathcal{Q}_{i,j}\cup \mathcal{Q}_{a,b}$ is an induced grid-like minor of order $\ell$ in $G$, and we are done. Hence, we may assume that no $H_{i,j,a,b}$ contains a $K_{\ell}$ minor. By \cref{ExtremalMinor}, each $H_{i,j,a,b}$ is $d(\ell)$-degenerate.

    Let $H$ be the touching graph of $\bigcup(\mathcal{Q}_{i,j}\colon 1\leq i<j\leq \ell)$. Then $H$ admits a proper colouring with $m={\ell \choose 2}$ colour classes $V_1,\dots, V_m$ where each $V_p$ corresponds to one of the collections $\mathcal{Q}_{i,j}$. For all distinct $i,j\in [m]$, the bipartite subgraph $H[V_i\cup V_j]$ is $d(\ell)$-degenerate.  By \cref{IndTransversal}, $G$ contains a collection of anti-complete induced paths $Q_1,\dots,Q_m$ where $Q_p\in V_p$ for each $p\in [m]$.

    Consider the collection of induced paths $\mathcal{P}:=\{P_i\colon i\in [\ell]\}\cup \{Q_j\colon j\in [m]\}$. Let $J$ denote the touching graph of $\mathcal{P}$. Since $\{P_i\colon i\in [\ell]\}$ and $\{Q_j\colon j\in [m]\}$ are collections of anti-complete paths, $J$ is bipartite. Moreover, $J$ contains the $1$-subdivision of $K_{\ell}$ as a subgraph; indeed, for all $i,j\in [\ell]$ with $i<j$, some path in $\{Q_p\colon p\in [m]\}$ is an $(N_G[P_i],N_G[P_j])$-path. Therefore, $\mathcal{P}$ is an induced grid-like minor of order $\ell$ in $G$.
\end{proof}

Finally, we can prove \cref{TreeAlphaGridLikeMinor}.

\TreeAlphaGridLikeMinor*

\begin{proof}
    Set $k:=\max\{f_{\alpha}(\ceil{4e {\ell\choose 2} d(\ell)}),2t+1\}$ where $d(\ell)$ is from \cref{ExtremalMinor} and define $f(t,\ell):=4\ell(k+2t)-2$. Then $k>2t$. For the sake of contradiction, suppose that $\atw(G)\geq f(t,\ell)$. By \cref{AntiPathsBramble}, $G$ contains $\ell$ pairwise anti-complete induced paths $P_1,\dots,P_{\ell}$ such that, for all distinct $i,j\in [\ell]$, we have $\asep(G,N_G[P_i],N_G[P_j])\geq k\geq f_{\alpha}(\ceil{4e {\ell\choose 2} d(\ell)})$. By the definition of $\alpha$-Menger boundedness, it follows that, for all distinct $i,j\in [\ell]$, $G$ contains a collection $\mathcal{Q}_{i,j}$ of $\ceil{4e {\ell\choose 2} d(\ell)}$ pairwise anti-complete $(N_G[P_i],N_G[P_j])$-paths. By taking these paths to be vertex-minimal, we may assume that they are induced. By \cref{FindingGLMinor}, $G$ contains an induced grid-like minor of order $\ell$, a contradiction.
\end{proof}

\section{$\alpha$-Menger and Induced Apex-Forest Minor}\label{MengerForest}

In this section, we prove the following theorem.

\begin{thm}\label{AlphaMengerForestMinor}
    For every apex-forest $H$ and $t\in \NN$, the class of $K_{1,t}$-free graphs that exclude $H$ as an induced minor is $\alpha$-Menger bounded.
\end{thm}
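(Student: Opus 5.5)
The plan is to prove the contrapositive. Fix an apex-forest $H$ with apex $v$. We may assume $H-v$ is a tree $T$ after adding edges and vertices, since every apex-forest is an induced minor of a suitable apex-tree. Let $G$ be $K_{1,t}$-free with induced paths $P_1,P_2$. Set $A:=N_G[P_1]$ and $B:=N_G[P_2]$. Suppose $G$ has no $k$ pairwise anti-complete $(A,B)$-paths; we show $\asep(G,A,B)$ is bounded by a function of $k$, $t$ and $H$, or else find $H$ as an induced minor.

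\textbf{Step 1: greedy packing.} Build a maximal family $Q_1,\dots,Q_m$ of pairwise anti-complete geodesic $(A,B)$-paths. At each step take a shortest $(A,B)$-path in $G-N_G[Q_1\cup\dots\cup Q_{i-1}]$, so that $m<k$. By maximality, $X:=N_G[Q_1\cup\dots\cup Q_m]$ is an $(A,B)$-separator. Its independence number is not obviously bounded, because the paths can be long. The key observation is that a geodesic path $Q$ in $G-N[\text{previous}]$ has $\alpha(N[Q])$ comparable to its length, and a long geodesic is exactly what should produce $H$. So:
\begin{itemize}
\item if every $Q_i$ has bounded length, then $\alpha(X)\le \sum_i t\,|Q_i|$ is bounded and we are done;
\item otherwise some $Q_i$ is long.
\end{itemize}

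\textbf{Step 2: a long path gives an induced minor.} Suppose a chosen path $Q$ is very long, and suppose moreover that the separation is still large after deleting bounded-$\alpha$ pieces near the ends. I would then iterate the argument inside $G-N[Q]$ to obtain many vertices of $Q$ whose neighbourhoods are reached by many anti-complete paths heading to $A\cup B$. The paths $P_1$, $P_2$ and $Q$ each act as a ``spine''. Contracting $P_1$ (resp. $P_2$) gives a vertex that can serve as the apex. Hanging pieces of the other paths off the spine should build a large ``comb''-type structure, which contains every tree of bounded size as an induced minor. I would formalise this as a lemma: in a $K_{1,t}$-free graph, an induced path together with many pairwise anti-complete attached paths whose ends are all adjacent to a common connected set yields any fixed apex-tree as an induced minor. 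Ramsey's Theorem (\cref{RamseyTheorem}) and $K_{1,t}$-freeness are used to clean up attachments to bounded multiplicity. In particular, each vertex sees fewer than $t$ pairwise anti-complete paths, which keeps the attachments induced.

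\textbf{Step 3: induction on the order.} More robustly, I would induct on $k$. Given the sets $A$, $B$ with large $\alpha$-separation, take one geodesic $(A,B)$-path $Q$. Either
\begin{itemize}
\item $\asep(G-N[Q],A,B)$ is still large, in which case we recurse to get $k-1$ further paths anti-complete to $Q$; or
\item $N[Q]$ together with a small set is almost a separator, in which case $\alpha(N[Q])$ is large, so $Q$ is long.
\end{itemize}
In the second case, Menger-type (vertex-disjoint) routing of many paths through the long geodesic $Q$ yields the comb structure of Step 2, with $A$ as the apex.

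The main obstacle is Step 2: extracting an \emph{induced} apex-tree minor from a long geodesic crossed by many paths, with all non-adjacencies controlled. I would first handle it for $H$ a subdivided star plus apex and then use that every tree is an induced minor of a suitable ``caterpillar of combs''. I expect the induced cleanup via $K_{1,t}$-freeness and geodesicity to be the delicate part.
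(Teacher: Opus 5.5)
Your Steps~2 and~3 carry the entire content of the theorem, and both the dichotomy you propose there and the ``comb'' lemma are false as stated.

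\textbf{A counterexample.} Build $G$ as follows.
Take induced paths $P_1=a_0a_1\cdots a_m$ and $P_2=b_1b_2\cdots b_{m+1}$. For each $j\in[m]$ add a rung $a_jr_j^1r_j^2\cdots r_j^{L+1}b_j$, where $L\geq m+3$. Finally add a path $Q=q_0q_1\cdots q_{L-1}$ whose only further edges are $q_0a_0$, $q_{L-1}b_{m+1}$, and $q_jr_j^{j+1}$ for $j\in[m]$.
Set $A=N_G[P_1]$ and $B=N_G[P_2]$. Then:
\begin{itemize}
\item $Q$ has length $L-1$ while every other $(A,B)$-path has length at least $L$, so $Q$ is the unique geodesic;
\item $N_G[Q]$ contains $r_j^{j+1}$ for every $j$, so $\asep(G-N_G[Q],A,B)=0$;
\item yet the rungs are $m$ pairwise anti-complete $(A,B)$-paths, so $\asep(G,A,B)\geq m$;
\item $G$ has maximum degree $3$ (so it is $K_{1,4}$-free) and pathwidth at most $5$, so it contains no large complete binary tree even as a minor.
\end{itemize}

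Hence your greedy packing in Step~1 stops after the single long path $Q$. The second branch of Step~3 ($N_G[Q]$ almost separates, so $Q$ is long, so $H$ appears) fails for $t=4$ and $H$ a large binary tree. The same graph refutes your Step~2 lemma. $Q$ is an induced spine, the segments $r_j^{j+1}\cdots r_j^{L+1}$ are pairwise anti-complete induced paths attached to it, and all their ends are adjacent to the connected set $P_2$. Yet no large tree, with or without an apex, is an induced minor. A long geodesic whose neighbourhood kills the separation is not an obstruction at all. The argument must be allowed to discard it and use a different path.

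\textbf{What is missing.} This is what the paper's machinery (\cref{TechnicalInducedMenger,LemIteratePaths}) provides, and your proposal has no counterpart to its two key ingredients.

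First, a failing geodesic is only raw material. One takes the shortest prefix $P_{(1,i+1)}$ whose closed neighbourhood destroys the separation. The preceding prefix $W_x=P_{(1,i)}$ then satisfies both $\asep(G-N_G[W_x],A,B)\geq s$ and, by \cref{BasicLemSep1}, $\asep(G-A,N_G[W_x],B)\geq s$. Invariants of this type, (W1)--(W4), let the tree model be grown one non-leaf at a time. Meanwhile outcome (O2) may be realised by some \emph{other} path assembled during the construction; in the example it is necessarily not $Q$.

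Second, your ``Menger-type (vertex-disjoint) routing'' cannot produce anti-complete attachments, since that is exactly the statement being proved. Nor do $K_{1,t}$-freeness and Ramsey control the interaction between many long candidate branch sets. $K_{1,t}$-freeness only limits what a single vertex sees, and the clique side of the Ramsey dichotomy (many candidates pairwise sharing neighbours) must still be excluded. The paper turns such a configuration into an $n$-array, which by \cref{array} yields a $K_{c,c}$ induced minor. Such minors are excluded by \cref{ExcludedBipartite}, a consequence of the constellation theorem \cref{MainCHS}. The same device together with \cref{BipartiteRamsey} keeps newly added leaves from interfering with the existing ones. Only after a full model of $H-z$ satisfying the leaf condition (W2) is obtained is the apex assembled, from $G[P_2]$ and paths from the leaves to $N_G[P_2]$.
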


\cref{AlphaMengerForestMinor} is one of the main technical results of this paper. We begin with a high-level overview of its proof. Let $G$ be a $K_{1,t}$-free graph with no $H$-induced-minor, and let $Q_1$ and $Q_2$ be two paths in $G$. Suppose that $\asep(G,N_G[Q_1],N_G[Q_2])$ is large. Our goal is to show that $G$ contains many pairwise anti-complete $(N_G[Q_1],N_G[Q_2])$-paths.

The proof has four main ingredients. First, in \cref{SecAlphaSeparators}, we establish basic lemmas controlling how the independence number of separators changes with various modifications. Second, since $G$ is $K_{1,t}$-free and excludes a fixed planar graph $H$ as an induced minor, we show that $G$ also excludes a sufficiently large complete bipartite induced minor (see \cref{ExcludedBipartite}). We then prove some technical lemmas in \cref{SecBipartite} which force such a large complete bipartite induced minor whenever there is too much interaction between paths joining $N_G[Q_1]$ and $N_G[Q_2]$. These lemmas are then used in \cref{SecSinglePath} to extract a single $(N_G[Q_1],N_G[Q_2])$-path $P$ such that the separation between $N_G[Q_1]$ and $N_G[Q_2]$ remains large after deleting $N_G[P]$. Finally, in \cref{SecProofAlphaMenger}, we iterate this extraction step. Each iteration produces one more path while preserving enough separation to continue. This yields the required collection of pairwise anti-complete $(N_G[Q_1],N_G[Q_2])$-paths, completing the proof of \cref{AlphaMengerForestMinor}.

\subsection{Properties about $\alpha$-separators}\label{SecAlphaSeparators}

Before proving \cref{AlphaMengerForestMinor}, we first establish some useful properties about separators.

\begin{obs}\label{BasicLemSep3}
    For all graphs $G$ and $G'$ where $G'\subseteq G$, sets $A,B,A'\subseteq V(G)$ where $A'\subseteq A$, we have $ \max\{\asep(G',A,B),\asep(G,A',B)\}\leq \asep(G,A,B)$.
\end{obs}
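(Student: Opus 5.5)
The plan is to prove the two inequalities separately, in each case by transferring a witnessing separator from the larger setting to the smaller one. Let $X\subseteq V(G)$ be an $(A,B)$-separator in $G$ with $\alpha(X)=\asep(G,A,B)$. It then suffices to exhibit, from $X$, an $(A,B)$-separator in $G'$ and an $(A',B)$-separator in $G$, each having independence number at most $\alpha(X)$.

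\emph{First inequality.} Take $X':=X\cap V(G')$. Suppose $G'-X'$ contains an $(A,B)$-path $P$. Since $G'\subseteq G$, $P$ is also a path in $G$. It avoids $X$, because its vertices lie in $V(G')\setminus X'$. Its intersections with $A$ and $B$ are unchanged, since these are determined by vertex sets. So $P$ is an $(A,B)$-path in $G-X$, a contradiction. For the independence number, we compare $\alpha(X')$ as measured in $G'$ with $\alpha(X)$ in $G$. Here $G'[X']$ is a spanning subgraph of $G[X']$, and $G[X']$ is an induced subgraph of $G[X]$.

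This comparison is the main subtlety. Removing edges can increase the independence number, so the inequality genuinely needs $G'$ to be an induced subgraph of $G$. That is the convention of the paper, which identifies subgraphs with induced vertex sets, and in every application $G'$ is of the form $G-Z$. Under that reading $G'[X']=G[X']$, so $\alpha(X')\leq\alpha(X)$. I would state this convention explicitly in the proof.

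\emph{Second inequality.} Take the same $X$. Suppose $G-X$ contains an $(A',B)$-path $P=(v_1,\dots,v_m)$. Then $v_1\in A'\subseteq A$. Let $i\geq 1$ be the largest index with $v_i\in A$. The subpath $(v_i,\dots,v_m)$ meets $A$ only in $v_i$, and it meets $B$ only in $v_m$ because it is a subpath of $P$. It is therefore an $(A,B)$-path in $G-X$, a contradiction. Hence $X$ is an $(A',B)$-separator, and $\asep(G,A',B)\leq\alpha(X)$.

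Taking the maximum of the two bounds gives the observation. The only step needing care is the induced-subgraph convention in the first part. Everything else is the routine fact that any path from $A$ to $B$ contains an $(A,B)$-subpath.
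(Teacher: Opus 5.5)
Your proof is correct and is essentially the paper's own argument: a separator $X$ for $(A,B)$ in $G$ is also an $(A',B)$-separator in $G$, and $X\cap V(G')$ is an $(A,B)$-separator in $G'$. Your caveat about the first inequality is valid, and the paper leaves it implicit: it needs $G'$ to be an induced subgraph, so that $\alpha$ of $X\cap V(G')$ measured in $G'$ is at most $\alpha(X)$ in $G$. The caveat is harmless, since every application in the paper takes $G'$ of the form $G-Z$.
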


\begin{proof}
    Let $X$ be an $(A,B)$-separator in $G$. Then $X$ is an $(A',B)$-separator in $G$ and $X\cap V(G')$ is an $(A,B)$-separator in $G'$, implying the claim.
\end{proof}

\begin{obs}\label{BasicLemSep2}
    For every $t\in \NN$, all sets $A,B,C\subseteq V(G)$, if $\alpha(C)\leq t$ then $\min\{\asep(G-C,A,B),\asep(G,A-C,B)\}\geq \asep(G,A,B)-t$.  
\end{obs}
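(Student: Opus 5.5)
The plan is to take an optimal separator in the modified instance and add $C$ to it, which produces a separator in the original instance. Fix $G$, $A,B,C\subseteq V(G)$ with $\alpha(C)\le t$. We treat the two quantities inside the minimum separately.

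For the first term, let $X$ be an $(A,B)$-separator in $G-C$ with $\alpha(X)=\asep(G-C,A,B)$. We claim that $X\cup C$ is an $(A,B)$-separator in $G$. Suppose $P$ is an $(A,B)$-path in $G-(X\cup C)$. Then $P$ avoids $C$, so it lies in $G-C$. It still meets $A$ only in its first vertex and $B$ only in its last, because this condition is intrinsic to $P$. Hence $P$ is an $(A,B)$-path in $(G-C)-X$, a contradiction. Independence number is subadditive under union, so $\alpha(X\cup C)\le \alpha(X)+\alpha(C)\le \asep(G-C,A,B)+t$. This gives $\asep(G,A,B)\le \asep(G-C,A,B)+t$.

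For the second term, let $Y$ be an $(A\setminus C,B)$-separator in $G$ with $\alpha(Y)=\asep(G,A-C,B)$. We claim that $Y\cup C$ is an $(A,B)$-separator in $G$. Let $P$ be an $(A,B)$-path in $G-(Y\cup C)$. Its first vertex lies in $A$ and not in $C$, so it lies in $A\setminus C$. Every other vertex of $P$ is outside $A$, and hence outside $A\setminus C$. So $P$ is an $(A\setminus C,B)$-path in $G-Y$, a contradiction. As before, $\asep(G,A,B)\le \alpha(Y)+t$.

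Combining the two bounds gives the claim. The only point needing care is the definition of an $(X,Y)$-path, which requires the path to meet $A$ and $B$ exactly at its endpoints. We must confirm that a path witnessing a failure in the original instance is still a valid witness in the modified instance. This is immediate in both cases, since restricting to $G-C$ or shrinking $A$ to $A\setminus C$ does not create new intersections with $A$ or $B$. There is no real obstacle beyond this check.
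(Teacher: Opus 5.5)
Your proof is correct and follows the paper's argument: add $C$ to an optimal separator of the modified instance and use $\alpha(X\cup C)\leq\alpha(X)+\alpha(C)$. The only difference is in the second term. The paper notes $\asep(G,A-C,B)\geq\asep(G-C,A,B)$ and reuses the first bound, whereas you rerun the augmentation argument directly with $Y\cup C$, which is equally valid.
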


\begin{proof}
    Let $X$ be an $(A,B)$-separator in $G-C$. Then $X\cup C$ is an $(A,B)$-separator in $G$. Since $\alpha(C)\leq t$, we obtain $\alpha(X) + t\geq \asep(G,A,B)$, implying $\asep(G-C,A,B)\geq\asep(G,A,B)-t$. Since $\asep(G,A-C,B)\geq \asep(G-C,A,B)$, the claim follows.
\end{proof}

\begin{lem}\label{BasicLemSep1}
    For every $k\in\NN$, for every graph $G$ and all sets $A,B,C\subseteq V(G)$ with $A\cap B=\emptyset$, if $\asep(G,A,C)-\asep(G-B,A,C)\geq k$ where $k\leq \asep(G,A,C)$, then $\asep(G-A,B,C)\geq k$.  
\end{lem}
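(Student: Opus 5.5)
We argue by contradiction: from a cheap $(B,C)$-separator in $G-A$ and an optimal $(A,C)$-separator in $G-B$ we build an $(A,C)$-separator in $G$ of small independence number.

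\textbf{Setup.} Suppose $\asep(G-A,B,C)\leq k-1$. Fix a $(B,C)$-separator $X$ in $G-A$ with $\alpha(X)\leq k-1$. Fix an $(A,C)$-separator $Y$ in $G-B$ with $\alpha(Y)=\asep(G-B,A,C)$.

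\textbf{Key claim: $X\cup Y$ is an $(A,C)$-separator in $G$.} Suppose not, and let $P=(v_1,\dots,v_m)$ be an $(A,C)$-path in $G$ avoiding $X\cup Y$. Recall that $V(P)\cap A=\{v_1\}$ and $V(P)\cap C=\{v_m\}$. We split into two cases.
\begin{enumerate}
\item If $P$ avoids $B$, then $P$ is an $(A,C)$-path in $G-B$ avoiding $Y$. This contradicts the choice of $Y$.
\item Otherwise, let $v_i$ be the last vertex of $P$ in $B$. Since $A\cap B=\emptyset$, we have $v_i\neq v_1$, so $i\geq 2$. Hence the subpath $(v_i,\dots,v_m)$ contains no vertex of $A$, and lies in $G-A$. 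It meets $B$ only in $v_i$ and meets $C$ only in $v_m$. So it is a $(B,C)$-path in $G-A$ avoiding $X$. This contradicts the choice of $X$.
\end{enumerate}

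\textbf{Counting.} Independence number is subadditive under unions, so
\[
\asep(G,A,C)\leq \alpha(X\cup Y)\leq \alpha(X)+\alpha(Y)\leq k-1+\asep(G-B,A,C).
\]
This contradicts the hypothesis $\asep(G,A,C)-\asep(G-B,A,C)\geq k$. Hence $\asep(G-A,B,C)\geq k$.

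The only delicate step is the choice of the last $B$-vertex on $P$. This choice guarantees simultaneously that the subpath avoids $A$ (using $A\cap B=\emptyset$) and meets $B$ in exactly one vertex, so that it is a genuine $(B,C)$-path in $G-A$. The assumption $k\leq\asep(G,A,C)$ is not needed in this argument.
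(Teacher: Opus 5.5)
Your proof is correct and follows the paper's argument essentially step for step: union a cheap $(B,C)$-separator in $G-A$ with an optimal $(A,C)$-separator in $G-B$, and handle an $(A,C)$-path that meets $B$ by passing to the subpath starting at its last $B$-vertex. Your remark that the hypothesis $k\leq\asep(G,A,C)$ is redundant is also right, since it follows from the other hypothesis and $\asep(G-B,A,C)\geq 0$.
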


\begin{proof}
    Since $\asep(G-B,A,C)\leq \asep(G,A,C)-k$, there exists a set $X_1\subseteq V(G)\setminus B$ with $\alpha(X_1)\leq \asep(G,A,C)-k$ such that $G-B-X_1$ contains no $(A,C)$-path. Suppose for contradiction that $\asep(G-A,B,C)< k$. Then there exists a set $X_2\subseteq V(G)\setminus A$ with $\alpha(X_2)<k$, such that every $(B,C)$-path in $G-A$ intersects $X_2$. Let $P$ be an $(A,C)$-path in $G$. Then only the first vertex in $P$ belongs to $A$. If $V(P)\cap B=\emptyset$, then $P$ is an $(A,C)$-path in $G-B$, hence it contains a vertex from $X_1$. Otherwise, let $v$ be the final vertex of $P$ that belongs to $B$. Then $v\not\in A$ since $A\cap B=\emptyset$. As such, the subpath of $P$ from $v$ to the terminal vertex in $C$ is a $(B,C)$-path in $G-A$, so it contains a vertex from $X_2$. Thus, every $(A,C)$-path in $G$ contains a vertex from $X_1\cup X_2$, so $X_1\cup X_2$ is an $(A,C)$-separator. But $\alpha(X_1\cup X_2)\leq \alpha(X_1)+\alpha (X_2)<\asep(G,A,C)$, a contradiction.
\end{proof}

\subsection{Finding complete bipartite graphs as induced minors}\label{SecBipartite}

We will use the following result of \citet{CHS2024Bipartite}, which describes the unavoidable induced structures in graphs that contain a large complete bipartite graph as an induced minor while excluding a fixed planar graph as an induced minor.  An \defn{$(s,\ell)$-constellation} is an induced minor model $(\{v_1\},\dots,\{v_s\},P_1,\dots,P_{\ell})$ of $K_{s,\ell}$ in which the branch sets on one side of the bipartition are single vertices, and the branch sets on the other side are induced paths. 

\begin{thm}[\citet{CHS2024Bipartite}]\label{MainCHS}
    There is a function $f_B'$ such that, for all $n,\ell,s\in \NN$, for any $n$-vertex planar graph $H$, if $c:=f_B'(n,\ell,s)$, then every $H$-induced-minor-free graph that contains $K_{c,c}$ as an induced minor contains an $(s,\ell)$-constellation.
\end{thm}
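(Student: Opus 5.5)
This statement is a result of Chudnovsky, Hajebi and Spirkl that the paper cites, so within the paper it is used as a black box. The following is only a plan for how one would establish it.

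Write $G$ for an $H$-induced-minor-free graph, where $H$ is planar on $n$ vertices, and suppose $G$ contains a $K_{c,c}$ induced minor model $(A_1,\dots,A_c,B_1,\dots,B_c)$ with $c$ huge. The idea is to simplify the model in stages, each time shrinking $c$ by a Ramsey-type amount, until one side consists of single vertices and the other of induced paths.

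\textbf{Step 1: shrink one side to single vertices.} For each branch set $A_i$, take a minimal connected subgraph of $G[A_i]$ touching all of $B_1,\dots,B_c$. Such a subgraph is a tree with few leaves relative to its attachments.

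If some $A_i$ meets many $B_j$ only through pairwise distinct, far-apart vertices, then inside $A_i$ we find a long induced path. Together with the $B_j$'s this forms a rich structure: each $B_j$ is connected, touches every $A_{i'}$, and hits the path at separated places. Combining these pieces with a grid-like argument produces every planar graph on $n$ vertices, in particular $H$. This case is excluded.

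Hence, after Ramsey pruning, each $A_i$ has a bounded-size set of "hub" vertices through which most $B_j$'s attach. Pigeonholing on hubs reduces each $A_i$ to a single vertex $v_i$ adjacent to the $B_j$'s. Anticompleteness among the $v_i$'s is inherited from the $A_i$'s.

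\textbf{Step 2: turn the other side into induced paths.} Fix $s$ of the vertices $v_1,\dots,v_s$. In each $B_j$, take a minimal connected subgraph containing a neighbour of each of $v_1,\dots,v_s$. This is a subdivided tree with at most $s$ leaves.

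If the tree has a branching vertex of large degree, or many branch vertices, then together with the many other $B_{j'}$'s and the $v_i$'s it yields a subdivided-wall-like structure and hence an $H$ induced minor. Otherwise, by Ramsey over $j$, we can order the $v_i$'s along the tree in a consistent way and restrict to a sub-collection where the tree is a path. Choosing it shortest (induced within $B_j$) makes it an induced path $P_j$.

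\textbf{Main obstacle.} The hard part is the exclusion arguments in Steps 1 and 2. One must show that a high-complexity attachment pattern between many anticomplete connected sets forces every planar $H$ as an induced minor, and not merely a minor. I would attack this by extracting a large induced "comb" or "subdivided star" structure and feeding it into the known theorem that large induced subdivided walls or line graphs of walls contain every planar graph as an induced minor.

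The bookkeeping gives $f_B'(n,\ell,s)$ as an iterated Ramsey function of $n$, $\ell$ and $s$.
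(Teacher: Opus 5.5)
\textbf{There is a genuine gap, and it is already in Step~1.} The paper gives no proof of this statement: it is quoted from Chudnovsky, Hajebi and Spirkl and only used through \cref{ExcludedBipartite}. So your outline has to stand on its own, and it does not, because the ``spread'' case you declare excluded cannot be excluded.

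Take pairwise anti-complete induced paths $R_1,\dots,R_c$ and an independent set $\{v_1,\dots,v_c\}$. Let $v_j$ have exactly one neighbour $p_{i,j}$ on each $R_i$, with $p_{i,1},\dots,p_{i,c}$ occurring in this order along $R_i$, as far apart as you like. With $A_i:=R_i$ and $B_j:=\{v_j\}$ this is an induced minor model of $K_{c,c}$. Each $A_i$ is a long induced path meeting every $B_j$ at distinct, far-apart vertices, which is exactly your excluded case.

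Yet this graph has no $20\times 20$ grid induced minor. For each $v_j$ used by a grid model, the column $\{v_j\}\cup\{p_{i,j}\colon i\in[c]\}$ separates the part of the graph to its left from the part to its right. All model vertices in that column lie in the branch set of $v_j$ or in one of its at most four neighbours. By grid isoperimetry, all but boundedly many branch sets then lie strictly between two consecutive used columns, or beyond the extreme ones. No used $v$ lies there, so each such branch set is a subpath of a single $R_i$ and has at most two neighbouring branch sets in that region, which is impossible in a large grid.

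So long induced paths with spread-out attachments are not an obstruction. They are the path side of the conclusion: the graph above is itself a $(c,c)$-constellation, with the sides reversed relative to your Step~1. A proof must decide which side becomes singletons and which becomes paths, and your sketch has no mechanism for this.

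The complementary branch of Step~1 is also not what you claim. If the attachments are not spread along a path, they need not be concentrated on a bounded set of vertices \emph{adjacent} to the $B_j$. For example, $A_i$ can be a star whose leaves attach to distinct $B_j$.

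In general, a connectifier-type analysis gives a subdivided star, or the line graph of a subdivided star, whose legs end at the $B_j$. Its centre is not adjacent to the $B_j$. You may not contract, since the singletons of a constellation must be vertices of $G$. Absorbing legs into the $B_j$ can create edges between different $B_j$: legs may carry further attachments, and in the line-graph case the legs start from a clique.

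These star-like configurations, and the branching trees inside the $B_j$ in Step~2, are exactly where one must prove that $H$ appears as an \emph{induced} minor. One route would be to clean the legs so they are private and then exhibit a subdivided biclique. The phrases ``grid-like argument'' and ``subdivided-wall-like structure'' do not supply this. As it stands, the proposal places the entire content of the theorem in its ``main obstacle'' and gets one of its two dichotomies wrong.
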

Observe that a $(1,t)$-constellation contains $K_{1,t}$ as an induced subgraph. Hence, by setting $f_B(n,t):=f_B'(n,t,1)$, \cref{MainCHS} immediately gives the following. 

\begin{lem}\label{ExcludedBipartite}
    There is a function $f_B$ such that, for all $n,t\in \NN$, for any $n$-vertex planar graph $H$, if $c:=f_B(n,t)$, then every $K_{1,t}$-free $H$-induced-minor-free graph does not contain $K_{c,c}$ as an induced minor.
\end{lem}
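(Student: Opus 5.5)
The plan is to derive \cref{ExcludedBipartite} directly from \cref{MainCHS} with $s=1$ and $\ell=t$, so that $f_B(n,t):=f_B'(n,t,1)$. Fix $n,t\in\NN$, an $n$-vertex planar graph $H$, and set $c:=f_B(n,t)$. Suppose for contradiction that $G$ is $K_{1,t}$-free, $H$-induced-minor-free, and contains $K_{c,c}$ as an induced minor. By \cref{MainCHS} with $s=1$ and $\ell=t$, the graph $G$ contains a $(1,t)$-constellation $(\{v\},P_1,\dots,P_t)$.

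Next I would extract an induced $K_{1,t}$ from this constellation. Because it is an induced minor model of $K_{1,t}$, each path $P_i$ touches $\{v\}$, and $v\notin V(P_i)$ since branch sets are disjoint. Hence, for each $i\in[t]$, we can choose a vertex $u_i\in V(P_i)$ adjacent to $v$.

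The vertices $u_1,\dots,u_t$ are pairwise non-adjacent and distinct. This holds because the $P_i$ are branch sets of pairwise non-adjacent vertices of $K_{1,t}$, so they are pairwise anti-complete in the induced model. Therefore $\{v,u_1,\dots,u_t\}$ induces $K_{1,t}$ in $G$, contradicting $K_{1,t}$-freeness.

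No step is a real obstacle; the only care needed is to use the anti-completeness of the paths (the ``induced'' part of the model), which is exactly what makes the leaves independent.
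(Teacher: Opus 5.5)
Your proposal is correct and matches the paper's argument: set $f_B(n,t):=f_B'(n,t,1)$, apply \cref{MainCHS} to obtain a $(1,t)$-constellation, and note that it contains an induced $K_{1,t}$. The paper states that last step as a one-line observation; you spell it out by picking a neighbour of the centre vertex in each of the pairwise anti-complete paths.
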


Thus, for proving \cref{AlphaMengerForestMinor}, we may assume that we also exclude large complete bipartite graphs as induced minors. We now prove some technical lemmas to find these induced minors.

The classic K\H{o}v\'ari--S\'os--Tur\'an Theorem~\cite{kovari1954zarankiewicz} states that every $n$-vertex graph $G$ containing no $K_{m,m}$ subgraph has $O(n^{2-1/m})$ edges. 
The following lemma is a straightforward consequence of this result. 

\begin{lem}\label{lemBipartite}
    There is a function $f_K$ such that, for all $\epsilon_1,\epsilon_2>0$ and $m\in \NN$, if $G=(A,B)$ is a bipartite graph containing no $K_{m,m}$ subgraph with $|B|\geq |A|\geq \epsilon_1|B|-1$ and every vertex in $A$ has degree at least $\epsilon_2|B|$, then $|B| < f_K(\epsilon_1,\epsilon_2,m)$.
\end{lem}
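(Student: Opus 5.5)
We derive \cref{lemBipartite} directly from the K\H{o}v\'ari--S\'os--Tur\'an Theorem by counting edges of $G$ in two ways. Fix $m$. The theorem gives a constant $C_m$ such that every bipartite graph with parts $A,B$ and no $K_{m,m}$ subgraph satisfies
\[
e(G)\leq C_m\bigl(|A|\,|B|^{1-1/m}+|B|\bigr).
\]
Only the bipartite version is needed. It also follows from the $n$-vertex form stated above with $n=|A|+|B|\leq 2|B|$: this gives $e(G)\leq C'_m(2|B|)^{2-1/m}$, and this weaker form also suffices, as we check below. The minimum-degree hypothesis on $A$ gives the lower bound
\[
e(G)\geq \epsilon_2|A|\,|B|.
\]

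First we dispose of small cases. If $|B|< 2/\epsilon_1$, we are already done by choosing $f_K\geq 2/\epsilon_1$. Otherwise $|A|\geq \epsilon_1|B|-1\geq \epsilon_1|B|/2$. Combining the two bounds gives
\[
\epsilon_2\cdot\tfrac{\epsilon_1}{2}|B|^2\leq \epsilon_2|A|\,|B|\leq e(G)\leq C'_m 2^{2}|B|^{2-1/m}.
\]
Rearranging yields $|B|^{1/m}\leq 8C'_m/(\epsilon_1\epsilon_2)$, that is, $|B|\leq \bigl(8C'_m/(\epsilon_1\epsilon_2)\bigr)^m$.

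Finally we set $f_K(\epsilon_1,\epsilon_2,m):=\max\{2/\epsilon_1,(8C'_m/(\epsilon_1\epsilon_2))^m\}+1$, which gives the strict inequality. The hypothesis $|B|\geq|A|$ is used only to bound the total vertex count by $2|B|$. There is no real obstacle here. The only care needed is the additive $-1$ in the hypothesis $|A|\geq\epsilon_1|B|-1$, which is handled by the small-$|B|$ case split above.
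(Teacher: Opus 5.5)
Your proof is correct and follows the same route as the paper: you compare the lower bound $e(G)\geq \epsilon_2|A|\,|B|$ from the degree condition with the K\H{o}v\'ari--S\'os--Tur\'an upper bound for a graph on at most $2|B|$ vertices. The only difference is cosmetic. You absorb the additive $-1$ with the case split at $|B|=2/\epsilon_1$ and obtain an explicit bound, whereas the paper keeps the term $-\epsilon_2|B|$ and concludes by comparing growth rates.
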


\begin{proof}
    Set $a:=|A|$ and $b:=|B|$. Since every vertex in $A$ has degree at least $\epsilon_2 b$, we have $|E(G)|\geq a\epsilon_2 b \geq \epsilon_1\epsilon_2 b^2- \epsilon_2 b$. On the other hand, $G$ is $K_{m,m}$-free with at most $a+b\leq 2b$ vertices. Hence, by the K\H{o}v\'ari--S\'os--Tur\'an Theorem~\cite{kovari1954zarankiewicz}, we have $|E(G)|\in O(b^{2-1/m})$. Combining the two bounds, gives $\epsilon_1\epsilon_2 b^2- \epsilon_2 b\in O(b^{2-1/m})$.
    Since the left-hand side grows strictly faster than the right-hand side as $b\to \infty$, it follows that $b$ is bounded above by a function of $\epsilon_1,\epsilon_2,$ and $m$, as required.
\end{proof}

For a graph $G$ and $n\in \NN$, we define an \defn{$n$-array} to be a tuple $(P_1,\dots,P_{n},Q_1,\dots,Q_n, D_1,\dots,D_n)$ satisfying:
\begin{enumerate}[label={$(C{\arabic*})$}]
    \item\label{C1} $P_1,\dots, P_{n},Q_1,\dots,Q_n$ are pairwise anti-complete connected subgraphs in $G$; 
    \item\label{C2} $D_i\subseteq N_G[P_i]$ for every $i\in [n]$;
    \item\label{C3} for all $i,j\in [n]$, the set $D_i$ has a neighbour in $Q_j$; and
    \item\label{C4} for all $1\leq j<i\leq n$, the set $D_j$ is anti-complete to $P_i\cup D_i$. 
\end{enumerate}

\begin{lem}\label{array}
    There exists a function $f_C$ such that, for all $r,s,t\in \NN$, every $K_{1,t}$-free graph $G$ containing an $n$-array $(P_1,\dots,P_{n},Q_1,\dots,Q_n,D_1,\dots,D_n)$ where $n=f_C(r,s,t)$, contains an induced minor model $(X_1,\dots,X_r,Y_1,\dots,Y_s)$ of $K_{r,s}$ such that each $X_i$ is contained in $G[P_a\cup D_a]$ for some $a\in [n]$, and each $Y_j$ is equal to $Q_{b}$ for some $b\in [n]$.
\end{lem}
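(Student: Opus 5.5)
The plan is to build the model so that each $X_i$ is a small connected piece of some $G[P_a\cup D_a]$, with bounded ``complexity''. Then I will use Ramsey's Theorem (\cref{RamseyTheorem}) on the index set $[n]$ to make the chosen pieces pairwise anti-complete.

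\textbf{Step 1 (choosing the $Y_j$ and the pieces).} First I fix the $Y$-side in advance: $Y_j:=Q_j$ for $j\in[s]$. By \ref{C1} these are pairwise anti-complete connected subgraphs, and they are anti-complete to every $P_a$. Since $D_a\subseteq N_G[P_a]$ and $P_a$ is anti-complete to each $Q_b$, the set $D_a$ is disjoint from every $Q_b$.

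For each $a\in[n]$ and $b\in[s]$, I use \ref{C3} to choose $d_{a,b}\in D_a$ with a neighbour in $Q_b$, and set $S_a:=\{d_{a,b}\colon b\in[s]\}$. I then let $T_a$ be a vertex-minimal connected subgraph of $G[P_a]$ containing a neighbour in $P_a$ of each vertex of $S_a\setminus V(P_a)$. Such a $T_a$ exists because $P_a$ is connected and $S_a\subseteq N_G[P_a]$. By minimality, $T_a$ is a tree with at most $s$ leaves, so it is a union of at most $2s$ induced paths. The candidate branch set is $X_a:=G[V(T_a)\cup S_a]$. It is connected, lies in $G[P_a\cup D_a]$, and touches every $Y_b$ with $b\in[s]$.

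\textbf{Step 2 (anti-completeness via Ramsey).} I need $r$ indices $a_1<\dots<a_r$ with the $X_{a_i}$ pairwise anti-complete. For $j<i$, \ref{C1} says $P_j$ and $P_i$ are anti-complete, and \ref{C4} says $D_j$ is anti-complete to $P_i\cup D_i$. Hence the only possible contact between $X_j$ and $X_i$ is between $S_i$ and $T_j$.

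Define a graph $\Gamma$ on $[n]$ in which $j<i$ are adjacent if $S_i$ touches $T_j$. If $\Gamma$ has an independent set of size $r$, we are done. So by \cref{RamseyTheorem}, taking $n\ge R(r,m)$, we may assume $\Gamma$ has a clique $i_0<i_1<\dots<i_m$ with $m$ huge.

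\textbf{Step 3 (the clique case — main obstacle).} In the clique case, for every $p\ge1$ some vertex $u_p\in S_{i_p}$ has a neighbour in $T_{i_0}$. These $u_p$ are pairwise non-adjacent by \ref{C4}, since they lie in different $D$'s. By pigeonhole over $b\in[s]$, I can pass to $m/s$ of them whose index $b$ in $S_{i_p}$ coincides. However, $u_p$ need not itself be adjacent to $Q_b$: $u_p$ is some $d_{i_p,b'}$ (so it is adjacent to $Q_{b'}$), but the vertex adjacent to $Q_b$ may be a different element of $S_{i_p}$.

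Because $G$ is $K_{1,t}$-free, each vertex of $T_{i_0}$ is adjacent to fewer than $t$ of the $u_p$. Since $T_{i_0}$ is a union of at most $2s$ induced paths, one of these paths, $R$, has neighbours among $\Omega(m/(st))$ of the $u_p$, with each vertex of $R$ seeing few of them.

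My first attempt would be to cut $R$ into consecutive disjoint subpaths that are anti-complete to one another, each touching its own $u_p$. Together with the pieces of $D_{i_p}$, these would form a new array-like configuration with fresh ``$P$'' sets, smaller by a factor depending only on $s$ and $t$. I would then recurse, choosing $f_C$ as a tower of Ramsey numbers. The difficulty is that this recursion must stay within the required form ``$X_i\subseteq G[P_a\cup D_a]$''.

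If that recursion fails, the fallback is to aim directly at a contradiction. I would try to show that a very large clique in $\Gamma$ forces either a $K_{1,t}$ in $G$ or a star-like vertex. The natural candidate is a vertex of $T_{i_0}$ adjacent to $t$ pairwise non-adjacent $u_p$, and the goal would be to exploit the minimality of $T_{i_0}$ to produce such a vertex.

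I expect this clique case to be the main obstacle. Step 1 and the independent-set case of Step 2 are routine.
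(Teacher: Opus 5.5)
\textbf{The gap is Step 3.} Steps 1 and 2 are fine, up to one small repair: $T_a$ must also contain $S_a\cap V(P_a)$, because $D_a\subseteq N_G[P_a]$ can meet $P_a$. The simplest fix is to drop $T_a$ and take $X_a:=G[V(P_a)\cup S_a]$. But the proof stops exactly where the clique case has to be handled, and neither of your strategies works as stated. Both use only the edges of $\Gamma$ at the \emph{smallest} index $i_0$, i.e.\ the fact that every $S_{i_p}$ touches $T_{i_0}$. A large star of this kind in $\Gamma$ is consistent with all the hypotheses. For example, $P_{i_0}$ could be a long induced path on which each $u_p$ has its own private neighbour. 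Then no vertex of $T_{i_0}$ need be the centre of a $K_{1,t}$, and no contradiction can come from that side. The recursion, as you note yourself, is unspecified and threatens the required form $X_i\subseteq G[P_a\cup D_a]$.

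\textbf{The missing observation.} Apply $K_{1,t}$-freeness at the vertices of $S_i$, and look at the \emph{largest} index of the clique. The set $S_i\subseteq N_G[P_i]$ is disjoint from every $P_j$ with $j\neq i$. A vertex $u\in S_i$ with neighbours in $t$ of the pairwise anti-complete sets $P_1,\dots,P_n$ would be the centre of an induced $K_{1,t}$. Hence $S_i$ touches at most $s(t-1)$ of the sets $P_j$ ($j\neq i$). So every $i$ has at most $s(t-1)$ neighbours $j<i$ in $\Gamma$. Thus $\Gamma$ is $s(t-1)$-degenerate, has no clique on $s(t-1)+2$ vertices, and has an independent set of size at least $n/(s(t-1)+1)$. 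The clique case therefore never occurs, and $f_C(r,s,t):=\max\{s,\,r(s(t-1)+1)\}$ suffices, with no Ramsey step.

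\textbf{Comparison with the paper.} With this line added, your argument is complete and genuinely different from (and much shorter than) the paper's. The paper inducts on $r$, taking $X_r:=G[P_a\cup D_a]$ for the least $a$ in a suitable index set. It keeps the full sets $D_i$, deferring the choice of which $Q$'s form the $Y$-side. A later $D_i$, of size up to $n$, may have many neighbours in $P_a$, so the paper must prune $D_i\cap N_G(P_a)$. It uses an out-degree argument to keep this pruning small, and then needs the K\H{o}v\'ari--S\'os--Tur\'an bound (\cref{lemBipartite}) to recover a common set of $Q$'s for the next level. Your choice of fixing $Y_j=Q_j$ ($j\in[s]$) in advance and shrinking each $D_a$ to the $s$-element set $S_a$ is exactly what bounds the back-degree. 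It gives a linear $f_C$ instead of an iterated one.
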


\begin{proof}
    We make no attempt to optimise the function $f_C$. We proceed by induction on $r$. 
    
    For $r=1$, let $f_C(1,s,t):=s$. Given an $s$-array, set $X_1:=G[P_1\cup D_1]$    and $Y_j:=Q_{j}$ for each $j\in [s]$. Then $X_1$ is connected since $P_1$ is connected and $D_1\subseteq N_G[P_1]$. Moreover, $X_1$ is adjacent to each $Y_j$ since $D_1$ has a neighbour in each $Q_{j}$. Finally, $Y_1,\dots,Y_s$ are pairwise anti-complete connected subgraphs, giving the desired induced minor model of $K_{1,s}$. 
    
    Now assume $r\geq 2$, and that the statement holds for $r-1$. Let $m:=f_C(r-1,s,t)$ and set 
    $$f_C(r,s,t):=f_K(\frac{1}{4t^2+1},\frac{1}{2t},m)$$ 
    where $f_K$ is the function from \cref{lemBipartite}. Let $n:=f_C(r,s,t)$.

    Let $(P_1,\dots,P_{n},Q_1,\dots,Q_n,D_1,\dots, D_n)$ be an $n$-array in $G$. By deleting excess vertices, we may assume that each $D_i$ is minimal subject to having a neighbour in each $Q_j$. So $|D_i|\leq n$. Moreover, for each $v\in D_i$, there exists $j\in [n]$ such that $N_G(Q_j)\cap D_i=\{v\}$. Since $Q_{1},\dots, Q_{n}$ are pairwise anti-complete and $G$ is $K_{1,t}$-free, each vertex of $D_i$ has neighbours in at most $t-1$ of these paths. Hence $|D_i|\geq \frac{n}{t}$.
    
    Define an auxiliary digraph $\overrightarrow{H}$ on vertex-set $[n]$ where $\overrightarrow{ij}$ is an arc in $\overrightarrow{H}$ if $|D_i\cap N_G(P_j)|\geq \frac{n}{2t}$. We claim that $\overrightarrow{H}$ contains a large independent set. Fix $i\in [n]$. Since each vertex of $D_i$ is adjacent to at most $t-1$ of the pairwise anti-complete paths $P_1,\dots,P_n$, we have 
    $$\sum_{j=1}^n |D_i \cap N_G(P_j)|\leq (t-1)|D_i|\leq (t-1)n.$$ 
    By the construction of $\overrightarrow{H}$, it follows that the node $i\in V(\overrightarrow{H})$ has outdegree at most $\frac{(t-1)n}{n/2t}< 2t^2$. Thus, the underlying graph of $\overrightarrow{H}$ has average degree less than $4t^2$, and hence $\overrightarrow{H}$ contains an independent set $I$ of size at least $\frac{n}{4t^2+1}$. 
    
    Let $a\in I$ be the minimal element of $I$, and set $X_r:=G[P_a\cup D_a]$. Then $X_r$ is connected and is adjacent to each connected subgraph $Q_j$. Set $I':=I\setminus\{a\}$. For each $i\in I'$, define $D_i':=D_i\setminus N_G[X_r]$. By \cref{C1}, \cref{C4} and the minimality of $a$, $X_r$ is anti-complete to $P_i\cup D_i'$ for all $i\in I'$. Since $I$ is an independent set in $\overrightarrow{H}$, $|D_i\cap N_G[X_r]|< \frac{n}{2t}$ and so $|D_i'|>\frac{n}{t}-\frac{n}{2t}= \frac{n}{2t}$. By the minimality of $D_i$, it follows that $D_i'$ is adjacent to at least $\frac{n}{2t}$ of the connected subgraphs $Q_{1},\dots,Q_{n}$.

    Define a bipartite graph $H'$ with bipartition $(I',[n])$ where $ij\in E(H')$ if $D_i'$ is adjacent to $Q_j$. Then every vertex of $I'$ has degree at least $\frac{n}{2t}$ in $H'$ and $|I'|\geq \frac{n}{4t^2+1}-1$. By the choice of $n$, it follows from \cref{lemBipartite} that $H'$ contains $K_{m,m}$ as a subgraph. Thus, there exist sets $A\subseteq I'$ and $B\subseteq [n]$ with $|A|=|B|=m$, such that, for every $i\in A$ and $j\in B$, the set $D_i'$ has a neighbour in $Q_j$.

    We claim that $(P_i\colon i\in A), (Q_j\colon j\in B), (D_i'\colon i\in A)$ forms an $m$-array after re-indexing in increasing order. Indeed:
    \begin{itemize}
        \item all the chosen connected subgraphs remain pairwise anti-complete;
        \item $D_i'\subseteq D_i\subseteq N_G[P_i]$ for each $i\in A$;
        \item if $i\in A$ and $j\in B$, the set $D_i'$ is adjacent to $Q_j$; and
        \item if $i,i'\in A$ with $i'<i$, then $D_{i'}$ is anti-complete to $P_i\cup D_i$, and hence $D_{i'}'\subseteq D_{i'}$ is anti-complete to $P_i\cup D_i'$.
    \end{itemize}
    Thus it is indeed an $m$-array. Applying the induction hypothesis to this $m$-array, it follows that $G$ contains an induced minor model $(X_1,\dots,X_{r-1},Y_1,\dots,Y_s)$ of $K_{r-1,s}$ where each $X_i$ is contained in $G[P_c\cup D_c]$ for some $c\in A$, and each $Y_j$ is one of the connected subgraphs $Q_{b}$ with $b\in B$. Since $X_r$ is adjacent to every connected subgraph $Q_b$ with $b\in B$, it follows that $X_r$ is adjacent to each $Y_j$. Moreover, $X_r$ is anti-complete to $P_c\cup D_c$ for every $c\in A$, and thus $X_r$ is anti-complete to each of the branch sets $X_1,\dots,X_{r-1}$. Consequently, $(X_1,\dots,X_{r-1},X_r,Y_1,\dots,Y_s)$ is an induced minor model of $K_{r,s}$ in $G$ satisfying the condition of the lemma.
\end{proof}

\begin{lem}\label{BipartiteRamsey}
    There exists a function $f_R$ such that, for all $\ell,n\in \NN$ the following holds. Let $G$ be a $K_{n,n}$-free bipartite graph with bipartition $(A,B)$ where $|A|\geq f_R(\ell,n)$. Suppose that $B$ has a partition $B_1,\dots,B_q$, where $q\leq \ell$ and $|B_j|\geq n$ for every $j\in[q]$. Then there exists a set $A_q\subseteq A$ with $|A_q|\geq \ell$, such that, for every $j\in [q]$, there is a vertex $b_j\in B_j$ with no neighbour in $A_q$.
\end{lem}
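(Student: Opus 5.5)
We will prove the statement by induction on $q$, shrinking $A$ one block at a time. Here ``$K_{n,n}$-free'' means that $G$ has no $K_{n,n}$ subgraph with one side in $A$ and the other in $B$.

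\textbf{Key step (one block).} Let $A'\subseteq A$ and let $B_j$ be a block with $|B_j|\geq n$. Fix any $n$ vertices $b^1,\dots,b^n\in B_j$. For each $a\in A'$, either $a$ has a non-neighbour among $b^1,\dots,b^n$, or $a$ is complete to $\{b^1,\dots,b^n\}$. At most $n-1$ vertices of $A'$ can be complete to $\{b^1,\dots,b^n\}$, since otherwise we obtain a $K_{n,n}$ subgraph. Hence at least $|A'|-(n-1)$ vertices of $A'$ have a non-neighbour among the $b^i$. By pigeonhole, some single $b^i$ is a non-neighbour of at least $(|A'|-n+1)/n$ vertices of $A'$. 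Set $b_j:=b^i$, and let $A''$ be the set of vertices of $A'$ not adjacent to $b_j$. Then $|A''|\geq (|A'|-n+1)/n$.

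\textbf{Iteration.} Apply the key step successively to $B_1,\dots,B_q$. This produces a nested sequence $A=A_0\supseteq A_1\supseteq\dots\supseteq A_q$ and vertices $b_j\in B_j$ such that $b_j$ has no neighbour in $A_j$, and hence none in $A_q\subseteq A_j$.

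\textbf{Choice of $f_R$.} Define $g_0:=\ell$ and $g_{i+1}:=n g_i+n$. Set $f_R(\ell,n):=g_\ell$. By induction on $j$, we get $|A_j|\geq g_{\ell-j}$, because $(g_{i+1}-n+1)/n\geq g_i$. Since $q\leq \ell$, we conclude $|A_q|\geq g_{\ell-q}\geq \ell$.

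The partition of $B$ is used only through each block having at least $n$ vertices.

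\textbf{Main obstacle.} The only real step is the pigeonhole argument that turns $K_{n,n}$-freeness into a single common non-neighbour within each block. This step is elementary, so there is no serious obstacle; the remaining work is bookkeeping to make sure the losses over at most $\ell$ rounds are absorbed by the choice of $f_R$.
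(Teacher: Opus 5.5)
Your proof is correct and follows essentially the same route as the paper: you process the blocks one at a time, use $K_{n,n}$-freeness on $n$ fixed vertices of the current block to find some $b_j$ with many non-neighbours in the current set, and restrict to those non-neighbours. The differences are cosmetic. You argue by direct pigeonhole where the paper argues by contradiction, and your recursive bound $g_{i+1}=ng_i+n$ replaces the paper's closed form $f_R(\ell,n)=\ell n^{\ell}$.
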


\begin{proof}
    Let $f_R(\ell,n):=\ell n^\ell$. Set $A_0:=A$ and for each $i\in[q]$, set $k_i:=\ell n^{q-i}$. We prove by induction on $i\in \{0,1,\dots,q\}$ that there exists a set $A_i\subseteq A$ such that $|A_i|\geq k_i$ and, for each $j\in [i]$, there is a vertex $b_j\in B_j$ with no neighbour in $A_i$. 
    
    For $i=0$, this holds with $A_0:=A$. Now suppose that $i>0$ and that the claim holds for $i-1$. For the sake of contradiction, suppose that every vertex of $B_i$ has fewer than $k_i$ non-neighbours in $A_{i-1}$. By induction $|A_{i-1}|\geq \ell n^{q-i+1}=nk_i$. Let $X\subseteq B_i$ with $|X|=n$. Then  
    $$|\bigcap (N_G(v)\colon v\in X)|\geq |A_{i-1}|-n(k_i-1)\geq n.$$ 
    So the $n$ vertices of $X$ have at least $n$ common neighbours in $A_{i-1}$, contradicting $G$ being $K_{n,n}$-free. Thus there is a vertex $b_i\in B_i$ with at least $k_i$ non-neighbours in $A_{i-1}$. Define $A_i:=A_{i-1}\setminus N_G(b_i).$ Then $|A_i|\geq k_i$ and $b_i$ has no neighbour in $A_i$. Since $A_i\subseteq A_{i-1}$, it follows by induction that for every $j< i$, there is a vertex $b_j\in B_j$ with no neighbour in $A_i$. This completes the induction hypothesis. The claim then follows when $i=q$.
\end{proof}

\subsection{Extracting a Single Path}\label{SecSinglePath}
In this subsection, we will show that if a graph $G$ does not contain a forest as an induced minor and if $\asep(G,A,B)$ is sufficiently large for some sets $A,B\subseteq V(G)$, then there is an $(A,B)$-path $P$ between $A$ and $B$ such that $\asep(G-N_G[P],A,B)$ is still large. This will be the core lemma needed to prove \cref{AlphaMengerForestMinor}.

Let $(T,x)$ be a rooted tree. A vertex is a \defn{leaf} if it has no strict descendants, and a \defn{non-leaf} otherwise. For $m\in \NN_0$ and $\ell \in \NN$, we call $(m,\ell)$ a \defn{tree-pair} if there exists a non-empty tree $(T,x)$ with exactly $m$ non-leaves and $\ell$ leaves. It follows immediately that the set of tree pairs is $\{(0,1)\}\cup \{(m,\ell)\colon m,\ell\in \NN\}$. The only rooted tree with $0$ non-leaves is $(K_1,x)$. For every $m,\ell \in \NN$, one can take a rooted star with $\ell$ leaves then subdivide an edge $m-1$ times so that the tree has $m$ non-leaves.

\begin{lem}\label{TechnicalInducedMenger}
    There exists a function $f_{\mathcal{W}}$ such that, for every $m\in \NN_0$ and $c,\ell,s,t\in \NN$ where $s>2t$ and $(m,\ell)$ is a tree-pair, the following holds. Let $(T,x)$ be a rooted tree where $M$ is the set of non-leaves, $L$ is the set of leaves, $|M|=m$, and $|L|=\ell$. Let $G$ be a $K_{1,t}$-free graph and let $A,B\subseteq V(G)$. If $\asep(G,A,B)\geq f_{\mathcal{W}}(c,m,\ell,s,t)$, then one of the following holds:
    \begin{enumerate}[label={$(O{\arabic*})$}]
        \item \label{outcome_model}
        $G$ contains an induced subgraph $\widetilde{G}$ which contains an induced minor model $\mathcal{W}=(W_y\colon y\in V(T))$ of $(T,x)$ satisfying:
        \begin{enumerate}[label={$(W{\arabic*})$}]
            \item\label{W1} $\asep(\widetilde{G}-N_{\widetilde{G}}[V(\mathcal{W})],A,B)\geq s$;
            \item\label{W2} for each $y\in L$,
            $\asep(\widetilde{G}-(N_{\widetilde{G}}[V(\mathcal{W})\setminus W_y]\cup A),N_{\widetilde{G}}[W_y],B)\geq s$;
            \item\label{W3} $W_x\cap A\neq \emptyset$; and
            \item\label{W4} $V(\mathcal{W})\cap B =\emptyset$;
        \end{enumerate}
        \item\label{outcome_path}
        there exists an $(A,B)$-path $P$ in $G$ such that $\asep(G-N_G[P],A,B)\geq  s$; or
        \item\label{outcome_Knn} $G$ contains $K_{c, c}$ as an induced minor.
    \end{enumerate} 
\end{lem}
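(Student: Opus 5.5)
We argue by induction on $m+\ell$ over tree-pairs, building the model of $(T,x)$ by growing one leaf at a time.

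\textbf{Base case $(m,\ell)=(0,1)$.} Here $T=K_1$. Take a geodesic $(A,B)$-path $P=(v_1,\dots,v_p)$ in $G$. If $\asep(G-N_G[P],A,B)\geq s$ we are in \ref{outcome_path}. Otherwise we take $W_x$ to be a short initial segment, perhaps just $\{v_1\}$, and $\widetilde G=G$. Then $\alpha(N_G[W_x])\leq t$, so \cref{BasicLemSep2} gives \ref{W1} once $f_{\mathcal W}\geq s+t$. For \ref{W2} we must separate $N_G[W_x]$ from $B$ in $G-A$. Because $W_x$ is anchored at a vertex of $A$ (which gives \ref{W3}), \cref{BasicLemSep1} is the tool that converts a drop in $\asep$ after deleting a set into high separation from that set. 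We use \ref{W4} to keep $W_x$ disjoint from $B$, since a geodesic path meets $B$ only at its end.

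\textbf{Inductive step.} Pick a leaf $z$ of $T$ with parent $y$, and let $T'=T-z$. Then $T'$ has tree-pair $(m,\ell-1)$ if $y$ still has other children, or $(m-1,\ell)$ if $y$ becomes a leaf. Apply induction to $T'$ with a much larger separation parameter $s'\gg s$. If that yields \ref{outcome_path} or \ref{outcome_Knn}, we are done, because $s'\geq s$. Otherwise we have $\widetilde G'$ and a model $\mathcal W'$.

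\textbf{Case 1: $y$ is a leaf of $T'$.} By \ref{W2} there is a large $\alpha$-separation between $N[W_y]$ and $B$ in the graph $\widetilde G'-(N[V(\mathcal W')\setminus W_y]\cup A)$. We grow a new branch set $W_z$ from $N[W_y]$ along a geodesic towards $B$, apply the base-case argument there, and pass to an induced subgraph $\widetilde G$ that deletes problematic neighbourhoods. Each such deletion costs only bounded $\alpha$ by \cref{BasicLemSep2}, using $K_{1,t}$-freeness.

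\textbf{Case 2: $y$ is a non-leaf of $T'$.} We cannot attach directly, because no $W_y$ has the \ref{W2} separation. Instead we repeat the construction many times: we produce many candidate leaf-paths hanging off $N[W_y]$, or many candidates for the whole subtree. Each new candidate is extracted in the graph remaining after deleting the neighbourhoods of the earlier ones. This is the point where outcome \ref{outcome_Knn} has to be forced. If the candidates fail to be separable, their touching pattern forms an $n$-array, with the $P_i$ the candidate paths, $D_i\subseteq N[P_i]$, and the $Q_j$ the other subgraphs they all touch. Then \cref{array} gives a $K_{c,c}$ induced minor. Alternatively, \cref{BipartiteRamsey} together with \cref{lemBipartite} picks out a sub-collection avoiding one vertex in each part, and that sub-collection is the surviving valid extension.

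\textbf{Bookkeeping.} At each step we track how $\asep$ decreases using \cref{BasicLemSep3}, \cref{BasicLemSep2} and \cref{BasicLemSep1}. This sets $f_{\mathcal W}(c,m,\ell,s,t)$ recursively in terms of $f_{\mathcal W}$ at the smaller tree-pair with an enlarged $s$, composed with $f_C$, $f_R$ and $f_K$.

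The main obstacle is preserving \ref{W2} for all existing leaves simultaneously while adding $W_z$. Deleting $N[W_z]$ may destroy the separation for other leaves $y'$, and recovering it is exactly where the array and $K_{c,c}$ argument must be used.
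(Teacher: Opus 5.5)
Your proposal picks the right tools and correctly names the main obstacle, but there are genuine gaps in the base case and in the inductive step. In the base case, a short segment such as $W_x=\{v_1\}$ cannot satisfy \ref{W2}. In $G-A$ the set $N_G[W_x]\setminus A$ is itself a separator between $N_G[W_x]$ and $B$, so for $|W_x|\le 2$ we get $\asep(G-A,N_G[W_x],B)\le 2t<s$. More fundamentally, \ref{W1} requires that deleting $N_G[W_x]$ costs little, while \cref{BasicLemSep1} yields \ref{W2} only from a \emph{large} drop, and you never reconcile the two. The paper does this with a threshold choice. Assuming \ref{outcome_path} fails, let $i$ be least with $\asep(G-N_G[P_{(1,i+1)}],A,B)<s$, and set $W_x:=P_{(1,i)}$. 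Minimality gives \ref{W1}. Deleting $N_G[P_{(3,i+1)}]$ drops the separation from $2s+3t$ to at most $s+2t$, and this set misses $A$ because $P$ is geodesic. That drop gives \ref{W2} via \cref{BasicLemSep1} and \cref{BasicLemSep2}. The same device is needed every time a new leaf path is grown.

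In the inductive step, removing one leaf at a time forces your Case 2, already when passing from $K_{1,1}$ to $K_{1,2}$ rooted at the centre. There you have nothing to grow from, since \ref{W2} only controls leaf branch sets. The paper instead inducts on $m$: it takes a leaf $v$ of $T[M]$ and deletes all of its leaf children $L_v$ at once, so $v$ is a leaf of $T^\ast$ and $N[W_v^\ast]$ is well separated from $B$.

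Your Case 1 also fails as written. Deleting the common neighbourhood of the new path $W_z$ and an old leaf set $W_{y'}$ does not have bounded cost. Two long anticomplete paths in a $K_{1,4}$-free (even planar) graph can share an independent set of common neighbours of unbounded size. If you do not delete it, nothing guarantees \ref{W2} for $y'$, because the newly deleted $N[W_z]$ can swallow the part of $N[W_{y'}]$ through which it reaches $B$.

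The missing idea is cloning. In $T^\ast$ every other leaf is replaced by $n$ clones. Then $N$ candidate paths $R_1,\dots,R_N$ are grown off $W_v^\ast$ one after another. At each step only $N[D_i]$ outside the model is deleted, where $D_i$ is a representative set with one vertex per other branch set, so this costs independence number at most $t(n\ell+N)$. This deletion is exactly what makes \ref{C4} of an array hold. Ramsey's theorem with \cref{array} then selects candidates with pairwise no common neighbours outside the model. Next, \cref{BipartiteRamsey} with \cref{array} selects, for each old leaf, one clone sharing no such neighbours with the chosen candidates, and this is what preserves \ref{W2} for the old leaves. Without the clones there is no choice on the old-leaf side, so the array argument you allude to cannot be run.
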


\begin{proof}
    We proceed by induction on $m$. The function $f_{\mathcal{W}}$ will be defined recursively on $m$; no attempt is made to optimise it. 

    Suppose first that $m=0$. Since $(0,1)$ is the only tree-pair with $m=0$, it follows that $\ell=1$ and $(T,x)=(K_1,x)$. We show that one may take $f_{\mathcal{W}}(c,0,1,s,t):=2s+3t$.
    Assume that $\asep(G,A,B)\geq 2s+3t$ and let $P=(v_1,v_2,\dots,v_p)$ be a geodesic $(A,B)$-path in $G$. Then $P$ is induced, only its first vertex lies in $A$, and only its last vertex lies in $B$.   
    For $1\leq a\leq b\leq p$, let $P_{(a,b)}$ denote the $(v_a,v_b)$-subpath of $P$. If $\asep(G-N_G[P],A,B)\geq s$, then outcome \cref{outcome_path} holds and we are done. So we may assume otherwise. Let $i\in \{0, 1, \dots, p-1\}$ be minimal such that $\asep(G-N_G[P_{(1,i+1)}],A,B)<  s$. Such an $i$ exists since $i=p-1$ is a candidate. Then $i\geq 2$ since $\alpha(N_G[P_{(1,3)}])\leq 3t$. Moreover, $V(P_{(1,i)})\cap B=\emptyset$. By minimality, it follows that $\asep(G-N_G[P_{(1,i)}],A,B)\geq  s$. 
    By \cref{BasicLemSep2}, we have  
    $\asep(G-N_G[P_{(1,3)}],A,B)\geq (2s+3t)-3t\geq s.$
    Applying \cref{BasicLemSep2,BasicLemSep3}, we have
    $$\asep(G-N_G[P_{(3,i+1)}],A,B)\leq \asep(G-N_G[P_{(1,i+1)}],A,B)+2t\leq s+2t.$$
    This means that $\asep(G,A,B)-\asep(G-N_G[P_{(3,i+1)}],A,B)\geq (2s+3t)-(s+2t)= s+t$. Since $P$ is geodesic and $i\geq 2$, it follows that $N_G[P_{(3,i+1)}]\cap A=\emptyset$. Applying \cref{BasicLemSep1} we obtain $\asep(G-A,N_G[P_{(3,i+1)}],B)\geq  s+t$. By applying \cref{BasicLemSep2,BasicLemSep3} once more we have
     $$\asep(G-A,N_G[P_{(1,i)}],B)\geq \asep(G-A,N_G[P_{(3,i)}],B)\geq \asep(G-A,N_G[P_{(3,i+1)}],B)-t\geq s.$$
     Set $W_x:=P_{(1,i)}$ and $\widetilde{G}:=G$. Then $\mathcal{W}=(W_x)$ is an induced minor model of $(T,x)$ with
     \begin{enumerate}[label={$(W{\arabic*})$}]
         \item  $\asep(\widetilde{G}-N_{\widetilde{G}}[\mathcal{W}],A,B)=\asep(G-N_G[P_{(1,i)}],A,B)\geq s$; 
         \item $\asep(\widetilde{G}-A,N_{\widetilde{G}}[W_x],B)=\asep({G}-A,N_G[P_{(1,i)}],B)\geq s$; 
         \item $v_1\in W_x\cap A$; and 
         \item $V(\mathcal{W})=W_x\subseteq V(P)\setminus\{v_p\}$ which is vertex disjoint from $B$.
     \end{enumerate}
     Thus we obtain outcome \cref{outcome_model}.
     
     Now assume that $m>0$, and that the statement holds for $m-1$. Fix $c,\ell,s,t$ with $s>2t$, and let $(T,x)$ be a rooted tree with $|M|=m$ and $|L|=\ell$. Let $v$ be a leaf of the tree $T[M]$, and set $L_v:=N_T(v)\cap L$. Set $n:=f_C(c,c,t)$ where $f_C$ is the function from \cref{array}. Let $\widetilde{N}:=f_R(\ell,n)$ where $f_R$ is the function from \cref{BipartiteRamsey} and set $N:=R(\widetilde{N},2n)$ where $R$ is the Ramsey function (see \cref{RamseyTheorem}). Construct a tree $T^\ast$ from $T$ as follows:
    \begin{itemize}
        \item delete every leaf in $L_v$; and
        \item for each leaf $z\in L\setminus L_v$, replace $z$ by $n$ clones $z^{(1)},\dots,z^{(n)}$, each adjacent to the unique neighbour of $z$ in $T$.
    \end{itemize}
    Since $m>0$, the root $x$ is a non-leaf of $T$, so it is an element of $V(T^{\ast})$. Let $M^\ast$ and $L^\ast$ denote the set of non-leaves and leaves of $T^\ast$ respectively. Then $v$ is a leaf of $T^\ast$, no new non-leaf is created, hence $M^\ast=M\setminus\{v\}$. In particular, $|M^\ast|=m-1$ and $|L^{\ast}|\leq n\ell$. 

    Set $\Delta:=t(n\ell+N)$, $s_N:=s+2t$, and $s_i:=2s_{i+1}+2\Delta+3t$ for each $i\in \{0,1,\dots,N-1\}$. Define $f_{\mathcal{W}}(c,m,\ell,s,t):=\max \{f_{\mathcal{W}}(c,m-1,\ell',s_0,t)\colon \ell'\in [n\ell], \text{$(m-1,\ell')$ is a tree-pair}\}$.
    
    Assume that $\asep(G,A,B)\geq f_{\mathcal{W}}(c,m,\ell,s,t)$. Apply the induction hypothesis to $(T^{\ast},x)$ with parameter $s_0$. If outcome \cref{outcome_path} or \cref{outcome_Knn} occurs, then we are immediately done. Hence we may assume that outcome \cref{outcome_model} occurs. Thus there exist an induced subgraph ${G}^{\ast}$ of $G$ and an induced minor model $\mathcal{W}^\ast=(W_y^{\ast}\colon y\in V(T^\ast))$ of $(T^\ast,x)$ in $G^{\ast}$ satisfying \cref{W1}--\cref{W4}, with $s_0$ in place of $s$.

    The next claim extracts many candidate branch sets $R_1,\dots, R_N$ for the vertices of $L_v$. In this claim, we may assume that there is no $(A,B)$-path $P$ in $G^{\ast}$ satisfying $\asep(G^{\ast}-N_G[P],A,B)\geq s$, otherwise we obtain outcome \cref{outcome_path} via \cref{BasicLemSep3}. This implies that $A\cap B=\emptyset$, otherwise we may take $P$ to be a singleton vertex in $A\cap B$. 
    
    For the following claim, set
    $\widetilde{G}_0:=G^\ast,$ $\mathcal{X}_0:=V(\mathcal{W}^\ast),$ and $R_0=\widetilde{R}_0=\widetilde{D}_0:=\emptyset$.

\begin{claim}
    There exist tuples
    $$(R_1,\widetilde{R}_1,D_1,\widetilde{D}_1,\mathcal{X}_1,\widetilde{G}_1),\dots, (R_N,\widetilde{R}_N,D_N,\widetilde{D}_N,\mathcal{X}_N,\widetilde{G}_N)$$
    such that, for every $i\in[N]$, the following properties hold:
    \begin{enumerate}[label={$(R{\arabic*})$}]
        \item\label{R1} $R_i$ is a non-empty path in $G^\ast$ that is adjacent to $W_v^\ast$;
        
        \item\label{R2} $V(R_i)\cap \bigl(N_{G^\ast}[V(\mathcal{W}^\ast)\setminus W_v^\ast]\cup W_v^{\ast}\cup A\cup B\bigr)=\emptyset;$
        
        \item\label{R3} $ \mathcal{X}_i:=V(\mathcal{W}^\ast)\cup \bigcup(V(R_j)\colon j\in[i]);$
        
        \item\label{R4} $D_i\subseteq N_{\widetilde{G}_{i-1}}(R_i)\setminus \mathcal{X}_i$  is minimal with the following property: whenever $\bigl(N_{\widetilde{G}_{i-1}}(R_i)\cap N_{\widetilde{G}_{i-1}}(X)\bigr)\setminus \mathcal{X}_i \neq\emptyset$ for some $X\in \{W_y^\ast:y\in L^\ast\setminus\{v\}\}\cup \{R_j:j<i\},$ then $N_{\widetilde{G}_{i-1}}(X)\cap D_i\neq\emptyset. $
        
        \item\label{R5} $\widetilde{D}_i:=N_{\widetilde{G}_{i-1}}[D_i]\setminus \mathcal{X}_i;$
        
        \item\label{R6}$ \widetilde{G}_i:=G^\ast-\bigcup (\widetilde{D}_j \colon j\in[i]);$
        
        \item\label{R7} $R_i$ is anti-complete in $G$ to $R_j\cup D_j$ for every $j<i$;
        
        \item\label{R8} $D_i$ is anti-complete in $G$ to $D_j$ for every $j<i$;
        
        \item\label{R9}
        $$ \widetilde{R}_i:= N_{\widetilde{G}_i}[R_i]\setminus \left( \bigcup(N_{\widetilde{G}_i}[W_y^\ast]\colon {y\in L^\ast}) \cup \bigcup(N_{\widetilde{G}_i}[R_j] \colon {j<i})\cup A \right); $$
    
        \item\label{R10}
        for every $y\in L^\ast\setminus\{v\}$,
        $$ \asep\left( \widetilde{G}_i- \left( N_{\widetilde{G}_i}[V(\mathcal{W}^\ast)\setminus W_y^\ast] \cup \bigcup(\widetilde{R}_j\colon {j\in[i]}) \cup A  \right), N_{\widetilde{G}_i}[W_y^\ast], B \right) \geq s_i; $$
    
        \item\label{R11}
        $$\asep\left(\widetilde{G}_i-\left(N_{\widetilde{G}_i}[V(\mathcal{W}^\ast)\setminus W_v^\ast]\cup N_{\widetilde{G}_i}\left[\bigcup(R_j\colon {j\in[i]})\right]\cup A\right),N_{\widetilde{G}_i}[W_v^\ast],B\right)\geq s_i;$$
    
        \item\label{R12}
        for every $j\in[i]$,
        $$ \asep\left( \widetilde{G}_i- \left( N_{\widetilde{G}_i}[V(\mathcal{W}^\ast)] \cup \bigcup(\widetilde{R}_b\colon {b\in[i]\setminus\{j\}}) \cup A \right), \widetilde{R}_j, B \right) \geq s_i; $$
    
        \item\label{R13}
        $ \asep\left( \widetilde{G}_i-N_{\widetilde{G}_i}[\mathcal{X}_i], A,B \right) \geq s_i. $
    \end{enumerate}
\end{claim}
\begin{proof}
While the statement of the claim is for $i\in [N]$, we first point that \cref{R10}--\cref{R13} also holds for $i=0$, since in this case \cref{W1} implies \cref{R13}, \cref{W2} implies \cref{R10}--\cref{R11}, and \cref{R12} is vacuous. 
We construct the tuples inductively. Let $i\in [N]$ and suppose that the tuples have already been constructed for all indices less than $i$, and that they satisfy \cref{R1}--\cref{R13}.

Set
$$H_i:=\widetilde{G}_{i-1}-\left(N_{\widetilde{G}_{i-1}}[V(\mathcal{W}^\ast)\setminus W_v^\ast]\cup N_{\widetilde{G}_{i-1}}\left[\bigcup(R_j\colon {j<i})\right]\cup A
\right).$$
By \cref{R11} at stage $i-1$, we have $ \asep(H_i,N_{H_i}[W_v^\ast],B)\geq s_{i-1}.$ Since $W_v^\ast\cap B=\emptyset$ by \cref{W4}, every path from
$N_{H_i}[W_v^\ast]$ to $B$ either starts in $N_{H_i}(W_v^\ast)$
or first meets $N_{H_i}(W_v^\ast)$ after leaving $W_v^\ast$. It follows that 
\[ \asep(H_i-V(W_v^\ast),N_{H_i}(W_v^\ast),B)=\asep(H_i,N_{H_i}[W_v^\ast],B)\geq s_{i-1}.\]
Let $Q_i=(u_1,\dots,u_p)$ be a geodesic $(N_{H_i}(W_v^\ast),B)$-path in $H_i-V(W_v^\ast)$. Such a path exists since $s_{i-1}>0$. Then $V(Q_i)\cap N_{H_i}(W_v^\ast)=\{u_1\}$ and $V(Q_i)\cap B=\{u_p\}$. Moreover, since $Q_i\subseteq H_i-V(W_v^{\ast})$, it follows from the definition of $H_i$ that
$$V(Q_i)\cap\left(N_{\widetilde{G}_{i-1}}[V(\mathcal{W}^\ast)\setminus W_v^\ast]\cup W_v^{\ast}\cup N_{\widetilde{G}_{i-1}}\left[\bigcup(R_j\colon {j<i})\right] \cup A\right) =\emptyset.$$
Thus $Q_i$ is vertex-disjoint from $V(\mathcal{W}^\ast)\cup A$ and is anti-complete to $V(\mathcal{W}^\ast\setminus W_v^{\ast})\cup \left[\bigcup(R_j\colon {j<i})\right]$. By \cref{R3}, it is vertex-disjoint to $\mathcal{X}_{i-1}$. 

We now show that $Q_i$ is anti-complete in $G$ to $D_j$ for every $j\in [i-1]$. Fix $j\in [i-1]$. By \cref{R5} and \cref{R6}, we have $N_{G}[D_j]\cap V(\widetilde{G}_{i-1}) \subseteq \mathcal{X}_j$. Since $\mathcal{X}_j\subseteq \mathcal{X}_{i-1}$, it follows that $Q_i$ is anti-complete to $D_j$.

Define
$$\widetilde{Q}_i:= N_{\widetilde{G}_{i-1}}[Q_i]\setminus \left( \bigcup(N_{\widetilde{G}_{i-1}}[W_y^\ast]\colon {y\in L^\ast}) \cup \bigcup(N_{\widetilde{G}_{i-1}}[R_j]\colon {j<i}) \cup A \right). $$
 
For the sake of contradiction, suppose that
$$\asep\left(\widetilde{G}_{i-1}-\left(N_{\widetilde{G}_{i-1}}[\mathcal{X}_{i-1}]
\cup\widetilde{Q}_i\right),A,B\right)\geq s+2t.$$
Let $J_i:=\widetilde{G}_{i-1}[V(\mathcal{W}^\ast)\cup V(Q_i)].$ Since $\mathcal{W}^\ast$ and $Q_i$ are both connected and adjacent in $\widetilde{G}_{i-1}$, it follows that $J_i$ is connected. By \cref{W3}, $V(J_i)\cap A\neq \emptyset$. Since $u_p\in V(J_i)\cap B$, $J_i$ contains an $(A,B)$-path. Since $A\cap B=\emptyset$, every $(A,B)$-path in $J_i$ contains a subpath from $N_{\widetilde{G}_{i-1}}(A)$ to $B$. Let $P'=(b_2,\dots,b_q)$ be a geodesic $(N_{\widetilde{G}_{i-1}}(A),B)$-path in $J_i$. Then $N_{\widetilde{G}_{i-1}}[P']\cap A\subseteq N_{\widetilde{G}_{i-1}}[\{b_2\}]$. Choose $b_1\in A$ adjacent to $b_2$, and let $P:=b_1P'$ be the resulting $(A,B)$-path in $\widetilde{G}_{i-1}$.

By the construction of $P$, 
$$N_{\widetilde{G}_{i-1}}[P]\subseteq N_{\widetilde{G}_{i-1}}[\{b_1,b_2\}]\cup (N_{\widetilde{G}_{i-1}}[J_i]\setminus A).$$

 By the definition of $J_i$, $\widetilde{Q}_i$ and $\mathcal{X}_{i-1}$, we have 
 $$N_{\widetilde{G}_{i-1}}[J_i]\setminus A=N_{\widetilde{G}_{i-1}}[V(\mathcal{W}^{\ast})\cup V(Q_i)]\setminus A\subseteq N_{\widetilde{G}_{i-1}}[\mathcal{X}_{i-1}]\cup \widetilde{Q}_i.$$
Consequently,
$$N_{\widetilde{G}_{i-1}}[P]\subseteq N_{\widetilde{G}_{i-1}}[\{b_1,b_2\}]\cup N_{\widetilde{G}_{i-1}}[\mathcal{X}_{i-1}]\cup \widetilde{Q}_i.$$

Now $\alpha(N_{\widetilde{G}_{i-1}}[\{b_1,b_2\}])\leq 2t$ since $\widetilde{G}_{i-1}$ is $K_{1,t}$-free. Therefore, by \cref{BasicLemSep2} and \cref{BasicLemSep3},
\begin{align*}
\asep(G-N_G[P],A,B)
&\ge\asep\left(\widetilde{G}_{i-1}-N_{\widetilde{G}_{i-1}}[P],A,B\right) \\
&\ge\asep\left(\widetilde{G}_{i-1}-\left(N_{\widetilde{G}_{i-1}}[\{b_1,b_2\}]\cup N_{\widetilde{G}_{i-1}}[\mathcal{X}_{i-1}]\cup \widetilde{Q}_i\right),A,B\right) \\
&\ge\asep\left(\widetilde{G}_{i-1}-\left(N_{\widetilde{G}_{i-1}}[\mathcal{X}_{i-1}]
\cup\widetilde{Q}_i\right),A,B\right)-2t \\
&\geq s.
\end{align*}

Thus $P$ satisfies outcome \cref{outcome_path}, contradicting our assumption
that no such path exists. Therefore
\[\tag{1}\asep\left(\widetilde{G}_{i-1}-\left(N_{\widetilde{G}_{i-1}}[\mathcal{X}_{i-1}] \cup\widetilde{Q}_i\right),A,B\right)<s+2t.\]

For each $r\in[p]$, let $Q_i^r$ denote the $(u_1,u_r)$-subpath of $Q_i$. Define
$$\widetilde{Q}_i^r:=N_{\widetilde{G}_{i-1}}[Q_i^r]\setminus\left(\bigcup(N_{\widetilde{G}_{i-1}}[W_y^\ast]\colon {y\in L^\ast})\cup\bigcup(N_{\widetilde{G}_{i-1}}[R_j]\colon {j<i})\cup A\right).$$
Then $\widetilde{Q}_i^p=\widetilde{Q}_i$. Let $r\in [p]$ be minimum such that one of the following occurs:
\begin{enumerate}[label={$(R{\arabic*}')$}, start=10]
    \item\label{R10a}
    for some $y\in L^\ast\setminus\{v\}$,
    $$ \asep\left(\left( \widetilde{G}_{i-1}- \bigl( N_{\widetilde{G}_{i-1}}[V(\mathcal{W}^\ast)\setminus W_y^\ast] \cup \bigcup(\widetilde{R}_j\colon {j<i}) \cup A \bigr)\right)-\widetilde{Q}_i^r, N_{\widetilde{G}_{i-1}}[W_y^\ast], B \right) <s_i+\Delta; $$

    \item\label{R11a}
    $$ \hspace{-6em} \asep\left(\Bigl( \widetilde{G}_{i-1}- \bigl( N_{\widetilde{G}_{i-1}}[V(\mathcal{W}^\ast)\setminus W_v^\ast] \cup N_{\widetilde{G}_{i-1}}\Bigl[ \bigl(\bigcup(R_j\colon {j<i})\bigr) \Bigr] \cup A \bigr)\Bigr)-N_{\widetilde{G}_{i-1}}[Q_i^r], N_{\widetilde{G}_{i-1}}[W_v^\ast], B \right) <s_i+\Delta; $$

    \item\label{R12a}
    for some $j<i$,
    $$ \asep\left( \left(\widetilde{G}_{i-1}- \Bigl( N_{\widetilde{G}_{i-1}}[V(\mathcal{W}^\ast)] \cup \bigcup(\widetilde{R}_b\colon {b\in[i-1]\setminus\{j\}}) \cup A \Bigr)\right)-\widetilde{Q}_i^r, \widetilde{R}_j, B \right) <s_i+\Delta; $$

    \item\label{R13a}
    $$ \asep\left( \Bigl(\widetilde{G}_{i-1}- N_{\widetilde{G}_{i-1}}[\mathcal{X}_{i-1}]\Bigr)- \widetilde{Q}_i^r, A,B \right) <s_i+\Delta. $$
\end{enumerate}

Such an $r$ exists. Indeed, for $r=p$, outcome \cref{R13a}
follows from (1), since $\widetilde{Q}_i^r=\widetilde{Q}_i$, and $s_i+\Delta\geq s+2t$ for all $i\in [N]$.

We next show that $r\geq 4$. Suppose $r\leq 3$. Then $\widetilde{Q}_i^r\subseteq N_{\widetilde{G}_{i-1}}[Q_i^r]\subseteq N_{\widetilde{G}_{i-1}}[\{u_1,u_2,u_3\}]$. Since $\widetilde{G}_{i-1}$ is $K_{1,t}$-free, $\alpha( N_{\widetilde{G}_{i-1}}[\{u_1,u_2,u_3\}])\leq 3t.$ By \cref{BasicLemSep2}, deleting either $\widetilde{Q}_i^r$ or $N_{\widetilde{G}_{i-1}}[Q_i^r]$ can reduce any of the relevant separator values by at most $3t$. Before this deletion, each of the corresponding separator values were at least $s_{i-1}$. This follows from \cref{R10}--\cref{R13} applied at $i-1$. However, since 
$$s_{i-1}-3t=(2s_i+2\Delta+3t)-3t > s_i+\Delta $$ 
 none of \cref{R10a}--\cref{R13a} can hold for $r\leq 3$. Hence $r\geq 4$. 

Set $a:=r-1.$ Then $a\geq 3$, and none of \cref{R10a}--\cref{R13a} holds with
$r=a$. Define $R_i:=Q_i^a.$ Then $R_i$ is a non-empty path in $G^{\ast}$ that is adjacent to $W_v^\ast$, hence \cref{R1} holds. Since $u_p\not\in V(R_i)$,
it follows that $R_i\cap B=\emptyset$. Furthermore, since $R_i\subseteq Q_i$, we have
$$V(R_i)\cap\left(N_{\widetilde{G}_{i-1}}[V(\mathcal{W}^\ast)\setminus W_v^\ast]\cup W_v^{\ast}\cup A\cup N_{\widetilde{G}_{i-1}}\left[\bigcup(R_j\colon {j<i})\right]
\cup N_G\left[\bigcup(D_j\colon {j<i})\right]\right)=\emptyset.$$

Thus \cref{R2} and \cref{R7} hold.

Set $\mathcal{X}_i:=V(\mathcal{W}^\ast)\cup \bigcup(V(R_j)\colon {j\in[i]})$ to give \cref{R3}.

Let $\mathcal{S}_i=\{W_y^\ast:y\in L^\ast\setminus\{v\}\}\cup \{R_j:j<i\}$. Let
$D_i\subseteq N_{\widetilde{G}_{i-1}}(R_i)\setminus \mathcal{X}_i$ be minimal with the property that whenever $\bigl(N_{\widetilde{G}_{i-1}}(R_i)\cap N_{\widetilde{G}_{i-1}}(X)\bigr)\setminus \mathcal{X}_i \neq\emptyset$ for some $X\in \mathcal{S}_i$ then $N_{\widetilde{G}_{i-1}}(X)\cap D_i\neq\emptyset.$
Then \cref{R4} is satisfied. By minimality, at most one vertex is needed for each member of $\mathcal{S}_i$. Therefore
$$|D_i|\leq |\mathcal{S}_i|\leq |L^\ast|+(i-1)\leq n\ell+N.$$

Define $\widetilde{D}_i:=N_{\widetilde{G}_{i-1}}[D_i]\setminus\mathcal{X}_i$ and $\widetilde{G}_i:=G^\ast-\bigcup(\widetilde{D}_j\colon {j\in[i]}).$ This gives \cref{R5} and \cref{R6}. Since $G$ is $K_{1,t}$-free,
$$\alpha(\widetilde{D}_i)\leq |D_i|t\leq t(n\ell+N)=\Delta.$$

Suppose that some $d\in D_i$ is adjacent to some vertex of $D_j$
with $j<i$. Since $D_i\subseteq V(\widetilde{G}_{i-1})\setminus\mathcal{X}_i \subseteq V(\widetilde{G}_{i-1})\setminus\mathcal{X}_j, $ we would have $d\in N_{G^\ast}[D_j]\setminus\mathcal{X}_j \subseteq \widetilde{D}_j,$ contradicting $d\in V(\widetilde{G}_{i-1})$. Therefore $D_i$ is anti-complete to $D_j$ for every $j<i$, proving \cref{R8}.

Define
    $$ \widetilde{R}_i:= N_{\widetilde{G}_i}[R_i]\setminus \left( \bigcup(N_{\widetilde{G}_i}[W_y^\ast]\colon {y\in L^\ast}) \cup \bigcup(N_{\widetilde{G}_i}[R_j] \colon {j<i})\cup A \right). $$
 This is \cref{R9}. Note that $\widetilde{R}_i$ is defined in $\widetilde{G}_i$, whereas $\widetilde{Q}_i^a$ is defined in the graph $\widetilde{G}_{i-1}$. Since $\widetilde{G}_{i}=\widetilde{G}_{i-1}-\widetilde{D}_i$, it follows that 
 $$\widetilde{R}_i\subseteq \widetilde{Q}_i^a\subseteq \widetilde{R}_i\cup \widetilde{D}_i.$$
 It remains to prove the separator lower bounds.
Since none of \cref{R10a}--\cref{R13a} holds with $r=a$, each of those separator values in $\widetilde{G}_{i-1}$ after deleting $Q_i^a$ are at least $s_i+\Delta$. Since $\widetilde{R}_i\subseteq \widetilde{Q}_i^a$, $N_{\widetilde{G}_i}[R_i]\subseteq N_{\widetilde{G}_{i-1}}[Q_i^a]$, $\widetilde{G}_i:=\widetilde{G}_{i-1}-\widetilde{D}_i$, and $\alpha(\widetilde{D}_i)\leq \Delta$, \cref{BasicLemSep2} 
gives the lower bounds for \cref{R10}, \cref{R11}, \cref{R12} for $j<i$, and \cref{R13}.

It remains to prove \cref{R12} for $j=i$. That is, we need to show
 $$ \asep\left( \widetilde{G}_i- \left( N_{\widetilde{G}_i}[V(\mathcal{W}^\ast)] \cup \bigcup(\widetilde{R}_b\colon {b\in[i-1]})\cup A \right), \widetilde{R}_i, B \right) \geq s_i.$$
 
Recall that $a\in [p]$ was chosen so that none of \cref{R10a}--\cref{R13a} holds for $r=a$, while at least one of them holds for $r=a+1$. We first prove the following auxiliary bound according to which of \cref{R10a}--\cref{R13a} holds for $r=a+1$.
\[\tag{(*)} \asep\left( \widetilde{G}_{i-1}- \left( N_{\widetilde{G}_{i-1}}[V(\mathcal{W}^\ast)] \cup \bigcup(\widetilde{R}_j\colon {j\in[i-1]}) \cup A \right), \widetilde{Q}_i^{a+1}, B \right) \geq s_i+\Delta+t.\]
        
\textbf{Case 1: \cref{R10a} holds.}\\
        Let $y\in L^{\ast}\setminus\{v\}$ be the witness for \cref{R10a}. By \cref{R10} at stage $i-1$ applied to $y$ together with \cref{R10a}, we have
        \begin{align*}
              \asep\Bigl(& \widetilde{G}_{i-1} - \Bigl(N_{\widetilde{G}_{i-1}}\bigl[V(\mathcal{W}^{\ast})\setminus W_y^{\ast}\bigr] \cup \bigcup(\widetilde{R}_j\colon {j\in[i-1]}) \cup A \Bigr), N_{\widetilde{G}_{i-1}}\bigl[W_y^{\ast}\bigr], B \Bigr) \\
              & - \asep\Bigl(\Bigl( \widetilde{G}_{i-1}- \bigl( N_{\widetilde{G}_{i-1}}\bigl[V(\mathcal{W}^{\ast})\setminus W_y^{\ast}\bigr] \cup \bigcup(\widetilde{R}_j\colon {j\in[i-1]}) \cup A
                \bigr)\Bigr) - \widetilde{Q}_i^{a+1}, N_{\widetilde{G}_{i-1}}\bigl[W_y^{\ast}\bigr], B \Bigr) \\
              &\geq s_{i-1}-(s_i+\Delta) = (2s_i+2\Delta+3t)-(s_i+\Delta)= s_i+\Delta+3t.
    \end{align*}

        Thus the drop in the separator value caused by deleting $\widetilde{Q}_i^{a+1}$ is at least $s_i+\Delta+3t$.
        Since $\widetilde{Q}_i^{a+1}\cap N_{\widetilde{G}_{i-1}}[W_y^\ast]=\emptyset$, \cref{BasicLemSep1} gives ((*)).

    \textbf{Case 2: \cref{R11a} holds.}\\ 
        By the definition of $H_i$, \cref{R11a} is equivalent to
        $$\asep(H_i-N_{H_i}[Q_i^{a+1}],N_{H_i}[W_v^\ast],B)<s_i+\Delta.$$
        Since $Q_i=(u_1,u_2,\dots,u_p)$ is a geodesic $(N_{H_i}(W_v^\ast),B)$-path in $H_i$, we have 
        $$N_{H_i}[Q_i^{a+1}]\cap N_{H_i}[W_v^\ast]\subseteq N_{H_i}[\{u_1,u_2\}].$$
        By the definition of $\widetilde{Q}_i^{a+1}$ and $H_i$, we have 
        $$N_{H_i}[Q_i^{a+1}]\subseteq  \widetilde{Q}_i^{a+1}\cup (N_{H_i}[Q_i^{a+1}]\cap  N_{H_i}[W_v^\ast])\subseteq \widetilde{Q}_i^{a+1}\cup N_{H_i}[\{u_1,u_2\}].$$ 
        Since $H_i$ is $K_{1,t}$-free, $\alpha(N_{H_i}[\{u_1,u_2\}])\leq 2t$. By applying \cref{BasicLemSep2} and \cref{BasicLemSep3}, we have
        \begin{align*}
            &\asep\left( H_i-(\widetilde{Q}_i^{a+1}\setminus N_{H_i}[\{u_1,u_2\}]), N_{H_i}[W_v^\ast], B \right)\\
            &\quad \leq  \asep\left(H_i-(N_{H_i}[Q_i^{a+1}]\setminus N_{H_i}[\{u_1,u_2\}]), N_{H_i}[W_v^\ast], B \right) \\
            &\quad \leq \asep\left(H_i-N_{H_i}[Q_i^{a+1}], N_{H_i}[W_v^\ast], B \right)+2t\\
            &\quad<s_i+\Delta+2t.
        \end{align*}
        Combining this with \cref{R11} at stage $i-1$, we have
         \begin{align*}
            &\asep\Bigl(H_i, N_{H_i}\bigl[W_v^{\ast}\bigr], B \Bigr) \\
            &\quad - \asep\left( H_i-(\widetilde{Q}_i^{a+1}\setminus N_{H_i}[\{u_1,u_2\}]), N_{H_i}[W_v^\ast], B \right) \\
            &\quad > s_{i-1}-(s_i+\Delta+2t)= s_i+\Delta+t.
        \end{align*}

         Since $(\widetilde{Q}_i^{a+1}\setminus N_{H_i}[\{u_1,u_2\}])\cap N_{H_i}[W_v^\ast]=\emptyset$, applying \cref{BasicLemSep1} and \cref{BasicLemSep3} give
          \begin{align*}
            &\asep\Bigl(H_i-N_{H_i}\bigl[W_v^{\ast}\bigr], \widetilde{Q}_i^{a+1}, B \Bigr) \\
            &\quad \geq \asep\Bigl(H_i-N_{H_i}\bigl[W_v^{\ast}\bigr], \widetilde{Q}_i^{a+1} \setminus N_{H_i}[\{u_1,u_2\}], B \Bigr) \\
            &\quad \geq s_i+\Delta+t.
        \end{align*}     
         By expanding out the definition of $H_i$, we obtain ((*)).
         
         \textbf{Case 3: \cref{R12a} holds.}\\
        Let $j\in [i-1]$ witness \cref{R12a}.
        By \cref{R12} applied at stage $i-1$ to $j$ together with \cref{R12a}, we have
         \begin{align*}
            \asep\Bigl(& \widetilde{G}_{i-1} - \bigl( N_{\widetilde{G}_{i-1}}\bigl[V(\mathcal{W}^{\ast})\bigr] \cup \bigcup(\widetilde{R}_b\colon {b\in[i-1]\setminus\{j\}}) \cup A \bigr), \widetilde{R}_j, B \Bigr) \\
            &- \asep\Bigl(\Bigl( \widetilde{G}_{i-1} - \bigl( N_{\widetilde{G}_{i-1}}\bigl[V(\mathcal{W}^{\ast})\bigr] \cup \bigcup(\widetilde{R}_b\colon {b\in[i-1]\setminus\{j\}})\cup A \bigr) \Bigr)-\widetilde{Q}_i^{a+1}, \widetilde{R}_j, B \Bigr) \\
            &> s_{i-1}-(s_i+\Delta) = s_i+\Delta+3t.
            \end{align*}
       Thus the drop in the separator value by deleting $\widetilde{Q}_i^{a+1}$ is at least $s_i+\Delta+3t$. Since $\widetilde{Q}_i^{a+1}\cap \widetilde{R}_j=\emptyset$, \cref{BasicLemSep1} gives ((*)).
        
        \textbf{Case 4: \cref{R13a} holds.}\\
         Combining \cref{R13} at stage $i-1$ with \cref{R13a}, we have
        \begin{align*}
            \asep&\left( \widetilde{G}_{i-1}- N_{\widetilde{G}_{i-1}}[\mathcal{X}_{i-1}], A,B \right)\\
            &-\asep\left( \Bigl(\widetilde{G}_{i-1}- N_{\widetilde{G}_{i-1}}[\mathcal{X}_{i-1}]\Bigr)
             - \widetilde{Q}_i^{a+1}, A,B \right) \\
            & \geq s_{i-1}-(s_i+\Delta) =s_i+\Delta+3t.
        \end{align*}
         So again, the drop in the separator value by deleting $\widetilde{Q}_i^{a+1}$ is at least $s_i+\Delta+3t$.  Since $\widetilde{Q}_i^{a+1}\cap A=\emptyset$, \cref{BasicLemSep1} gives us ((*)).

        Thus, we have verified ((*)) in each of the four cases. We now deduce \cref{R12} for $j=i$. Recall that $\widetilde{G}_i:=\widetilde{G}_{i-1}-\widetilde{D}_i$ and $\alpha(\widetilde{D}_i)\leq \Delta$. Moreover, observe that $\widetilde{Q}_i^{a+1}\setminus\widetilde{D}_i\subseteq \widetilde{R}_i\cup N_{\widetilde{G}_{i-1}}[u_{a+1}]$. Since $G$ is $K_{1,t}$-free, $\alpha(N_{\widetilde{G}_{i-1}}[u_{a+1}])\leq t$. By applying \cref{BasicLemSep1,BasicLemSep2}, we have
        \begin{align*}
            \asep&\left( \widetilde{G}_i- \left( N_{\widetilde{G}_i}[V(\mathcal{W}^\ast)] \cup \bigcup(\widetilde{R}_b\colon {b<i}) \cup A \right), \widetilde{R}_i, B \right) \\
            &=  \asep\left( (\widetilde{G}_{i-1}-\widetilde{D}_i)- \left( N_{\widetilde{G}_{i-1}}[V(\mathcal{W}^\ast)] \cup \bigcup(\widetilde{R}_b\colon {b<i}) \cup A \right), \widetilde{R}_i, B \right)  \\
            &\geq  \asep\left((\widetilde{G}_{i-1}-\widetilde{D}_i)- \left( N_{\widetilde{G}_{i-1}}[V(\mathcal{W}^\ast)] \cup \bigcup(\widetilde{R}_b\colon {b<i}) \cup A \right),  (\widetilde{R}_i\cup N_{\widetilde{G}_{i-1}}[u_{a+1}]), B \right)  -t \\
            &\geq  \asep\left((\widetilde{G}_{i-1}-\widetilde{D}_i)- \left( N_{\widetilde{G}_{i-1}}[V(\mathcal{W}^\ast)] \cup \bigcup(\widetilde{R}_b\colon {b<i}) \cup A \right), \widetilde{Q}_i^{a+1}, B \right)  -t \\
            &\geq  \asep\left(\widetilde{G}_{i-1}- \left( N_{\widetilde{G}_{i-1}}[V(\mathcal{W}^\ast)] \cup \bigcup(\widetilde{R}_b\colon {b<i}) \cup A \right), \widetilde{Q}_i^{a+1}, B \right)  -(\Delta+t) \\
            &\geq  (s_i+\Delta+t)-(\Delta+t)  =s_i .
        \end{align*}
        This proves \cref{R12} for $j=i$. Thus all properties \cref{R1}--\cref{R13} hold at stage $i$. This completes the induction on $i$, and hence proves the claim.
\end{proof}

    Set $\widetilde{G}:=\widetilde{G}_N$. At this stage, the branch sets of $\mathcal{W}^{\ast}$ together with the paths $R_1,\dots, R_N$ form a preliminary induced minor model of a tree obtained from $T$ by cloning each leaf. Properties \cref{W1}, \cref{W3}, and \cref{W4}, are already built into the construction. It remains to choose the branch sets for the leaves of $T$ so that \cref{W2} holds. 
    
    For the next two claims, we may assume that $G$ is $K_{c,c}$-induced-minor-free, otherwise outcome \cref{outcome_Knn} holds.
    
    \begin{claim}\label{Claimleavesbetween}
        There exists a set $S_1\subseteq [N]$ with $|S_1|=\widetilde{N}$ such that $(N_{\widetilde{G}}(R_i)\cap N_{\widetilde{G}}(R_j))\setminus\mathcal{X}_N=\emptyset$ for all distinct $i,j\in S_1$.
    \end{claim}

    \begin{proof} 
         Define an auxiliary graph $J$ with vertex set $[N]$, where distinct vertices $i$ and $j$ are adjacent if and only if $(N_{\widetilde{G}}(R_i)\cap N_{\widetilde{G}}(R_j))\setminus \mathcal{X}_N\neq \emptyset$. By the construction of $J$, it is enough to show that it contains an independent set of size $\widetilde{N}$.

         We first prove that $J$ contains no clique of size $2n$ (recall that $n:=f_C(c,c,t)$). Suppose that such a clique exists. Let $i_1<i_2<\dots <i_{2n}$ be the indices of the clique. We claim that $(R_{i_1},\dots,R_{i_n},R_{i_{n+1}},\dots, R_{i_{2n}},D_{i_1},\dots,D_{i_n})$ is an $n$-array in $G$. Indeed:
         \begin{itemize}
            \item By \cref{R1} and \cref{R7}, the subgraphs $R_{i_1},\dots,R_{i_{2n}}$ are connected and pairwise anti-complete in $G$, giving \cref{C1}.
            \item By \cref{R4}, we have $D_{i_{a}}\subseteq N_G[R_{i_{a}}]$ for every $a\in [n]$, giving \cref{C2}.
            \item Since $\{{i_{1}},\dots,{i_{2n}}\}$ is a clique in $J$, for every $a,b\in [n]$, we have $\bigl(N_{\widetilde{G}}(R_{i_{a}})\cap N_{\widetilde{G}}(R_{{i_{n+b}}})\bigr)\setminus \mathcal{X}_N\neq \emptyset$. By \cref{R4}, $D_{i_{a}}$ has a neighbour in $R_{i_{n+b}}$, giving \cref{C3}.
            \item By \cref{R7} and \cref{R8}, for all $1\leq a<b\leq n$, the set $D_{i_{a}}$ is anti-complete to $R_{i_{b}}\cup D_{i_{b}}$, giving \cref{C4}. 
        \end{itemize}
         
        By \cref{array}, $G$ contains $K_{c,c}$ as an induced minor, a contradiction. Thus $J$ has no clique of size $2n$. Since $N=R(\widetilde{N},2n)$, Ramsey's Theorem implies that $J$ contains an independent set $S_1$ of size $\widetilde{N}$, as required.
    \end{proof}

    \begin{claim}\label{Claimz}
        There exists $S_2\subseteq S_1$ with $|S_2|=|L_v|$ such that, for every $z\in L\setminus L_v$, there is a clone $\widehat{z}\in L^{\ast}$ of $z$ satisfying $(N_{\widetilde{G}}(W^{\ast}_{\widehat{z}})\cap N_{\widetilde{G}}(R_i))\setminus \mathcal{X}_N=\emptyset$ for every $i\in S_2$.
    \end{claim}
       \begin{proof} 
          Let $z_1,\dots, z_q$ be an ordering of $L\setminus L_v$. Then $q\leq |L|= \ell$. For each $a\in [q]$, let $\mathcal{C}_a:=\{z_a^{(1)},\dots,z_a^{(n)}\}\subseteq L^{\ast}$ be the set of the $n$ clones of $z_a$ in $T^\ast$. Set $\mathcal{C}:=\bigcup(\mathcal{C}_a\colon a\in [q])$. Define a bipartite graph $H$ with bipartition $(S_1,\mathcal{C})$ where $i\in S_1$ is adjacent to $z_a^{(j)}\in \mathcal{C}$ if and only if 
          $(N_{\widetilde{G}}(R_i)\cap N_{\widetilde{G}}(W_{z_a^{(j)}}^\ast))\setminus \mathcal{X}_N\neq \emptyset.$
          
          For the sake of contradiction, suppose that $H$ contains $K_{n,n}$ as a subgraph. Let $i_1<i_2<\dots<i_n$ be the indices on the $S_1$-side of $K_{n,n}$, and let $z_{a_1}^{(j_1)},\dots,z_{a_n}^{(j_n)}$ be the clones on the $\mathcal{C}$-side. Then
          $$(R_{i_1},\dots,R_{i_n}, W_{z_{a_1}^{(j_1)}}^\ast,\dots,W_{z_{a_n}^{(j_n)}}^\ast,  D_{i_1},\dots,D_{i_n})$$
          is an $n$-array in $G$. Indeed:
          \begin{itemize}
            \item Since $W_{z_{a_1}^{(j_1)}}^{\ast},\dots,W_{z_{a_n}^{(j_n)}}^{\ast}$ are branch sets corresponding to leaves in an induced minor model of a tree, they are non-empty connected subgraphs that are pairwise anti-complete. By \cref{R1} and \cref{R7}, the subgraphs $R_{i_1},\dots,R_{i_n}$ are connected and pairwise anti-complete in $G$. By \cref{R2}, $R_{i_b}$ and $W_{z_{a_{b'}}^{(j_{b'})}}^{\ast}$ are pairwise anti-complete for all $b,b'\in [n]$, giving \cref{C1}.
            \item By \cref{R4}, we have $D_{i_a}\subseteq N_G[R_{i_a}]$ for every $a\in [n]$, giving \cref{C2}.
            \item By the construction of $H$, for every $b,d\in [n]$, we have $\bigl(N_{\widetilde{G}}(R_{i_b})\cap N_{\widetilde{G}}(W_{z_{a_d}^{(j_d)}}^{\ast})\bigr)\setminus \mathcal{X}_N\neq \emptyset$. Hence, by \cref{R4}, $D_{i_b}$ has a neighbour in $V(W_{z_{a_d}^{(j_d)}}^{\ast})$, giving \cref{C3}.
            \item By \cref{R7} and \cref{R8}, for all $1\leq b<b'\leq n$, the set $D_{i_b}$ is anti-complete to $R_{i_{b'}}\cup D_{i_{b'}}$, giving \cref{C4}. 
        \end{itemize}
         By \cref{array}, this yields $K_{c,c}$ as an induced minor of $G$, a contradiction. Hence $H$ is $K_{n,n}$-free. Since $|S_1|=\widetilde{N}=f_R(\ell,n)$ and $|\mathcal{C}_a|=n$ for every $a\in [q]$, it follows by \cref{BipartiteRamsey} that there exists $S_2'\subseteq S_1$ with $|S_2'|\geq\ell$ such that, for every $a\in [q]$, there is a clone $\widehat{z}_a\in \mathcal{C}_a$ with no neighbour in $S_2'$. Setting $S_2$ to be a subset of $S_2'$ with $|S_2|=|L_v|$, the claim follows
         by the construction of $H$.
\end{proof}    
    
    We are now ready to define our induced minor model of $T$. For every non-leaf $y\in M$, set $W_y:=W_y^{\ast}$. For every leaf $z\in L\setminus L_v$, set $W_z:=W_{\widehat{z}}^{\ast}$ where $\widehat{z}$ is the clone of $z$ given by \cref{Claimz}. Finally, choose an arbitrary bijection $\phi\colon L_v\to S_2$ and set $W_z:=R_{\phi(z)}$ for each $z\in L_v$. The next claim completes the proof of the induction hypothesis.

    \begin{claim}\label{ClaimModel}
        $\mathcal{W}=(W_y\colon y\in V(T))$ is an induced minor model of $(T,x)$ in $\widetilde{G}$ satisfying \cref{W1}--\cref{W4}.
    \end{claim}
    
   \begin{proof}
         We shall repeatedly use the fact that
         $$V(\mathcal{W})\subseteq V(\mathcal{W}^{\ast})\cup \bigcup(V(R_j)\colon j\in S_2)\subseteq\mathcal{X}_N.$$
        We first verify that $\mathcal{W}$ is an induced minor model of $T$ in $\widetilde{G}$. By \cref{R5} and \cref{R6}, it follows that $\mathcal{X}_N\subseteq V(\widetilde{G})$ and thus $V(\mathcal{W})\subseteq V(\widetilde{G})$.
        Since $\mathcal{W}^{\ast}$ is an induced minor model of $T^{\ast}$, and $W_y\in \mathcal{W}^{\ast}$ for all $y\in V(T)\setminus L_v$, it follows that $(W_y\colon y\in V(T)\setminus L_v)$ is an induced minor model of $T-L_v$. For $z\in L_v$, \cref{R1} implies that $W_z$ is non-empty, connected and adjacent to $W_v$. \cref{R2} implies that $W_z$ is anti-complete to $W_y$ for all $y\in V(T)\setminus (L_v\cup \{v\})$ while also being vertex disjoint from $W_v$. Finally, \cref{R7} implies that $W_z$ is anti-complete to $W_{y}$ for all $y\in L_v\setminus\{z\}$. Thus $\mathcal{W}$ is an induced minor model of $T$ in $\widetilde{G}$.

        We now verify that the model satisfies \cref{W1}--\cref{W4}. By induction, $W_x^{\ast}\cap A\neq \emptyset$ and $V(\mathcal{W}^{\ast})\cap B=\emptyset$. Since $W_x=W_x^{\ast}$, \cref{W3} is satisfied. Since $\bigcup(V(R_j)\colon j\in [N])\cap B=\emptyset$ by \cref{R2}, it follows that $V(\mathcal{W})\cap B=\emptyset$, thus \cref{W4} is satisfied. 
        
        Applying \cref{R13} with \cref{BasicLemSep3}, we have
        \begin{align*}
            \asep\bigl(\widetilde{G}-N_{\widetilde{G}}[V(\mathcal{W})],A,B\bigr)
            &\geq \asep\bigl(
            \widetilde{G}_N-N_{\widetilde{G}_N}[\mathcal{X}_N],A,B\bigr) \\
            &\geq s_N \geq s.
        \end{align*}
        Thus \cref{W1} is satisfied.
        
        It remains to show \ref{W2}. By \cref{R9}, we have 
         $$N_{\widetilde{G}}[\bigcup(W_y\colon y\in L_v)]=N_{\widetilde{G}}[\bigcup(R_j\colon j\in S_2)]\subseteq N_{\widetilde{G}}[V(\mathcal{W}^{\ast})]\cup \bigcup (\widetilde{R}_j\colon j\in [N]) \cup A.$$
        Let $z\in L\setminus L_v$. Let $\widehat{z}\in L^{\ast}$ be the clone of $z$ such that $W_z=W_{\widehat{z}}^{\ast}$. By \cref{Claimz}, we have
        $$\left(N_{\widetilde{G}}[\bigcup(R_j\colon j\in S_2)]\cap N_{\widetilde{G}}[W_{\widehat{z}}^{\ast}]\right)\setminus \mathcal{X}_N=\emptyset.$$ 
        By \cref{R2}, $W_{\widehat{z}}^{\ast}$ is anti-complete to $R_j$ for all $j\in S_2$. Hence
         $$N_{\widetilde{G}}[\bigcup(W_y\colon y\in L_v)]\subseteq N_{\widetilde{G}}[V(\mathcal{W}^{\ast})\setminus W_{\widehat{z}}^{\ast}]\cup  \bigcup(\widetilde{R}_j\colon j\in [N])\cup A.$$
         As such,
        $$N_{\widetilde{G}}[V(\mathcal{W})\setminus W_z]\subseteq N_{\widetilde{G}}[V(\mathcal{W}^{\ast})\setminus W_{\widehat{z}}^{\ast}]\cup N_{\widetilde{G}}[\bigcup(W_y\colon y\in L_v)] \subseteq N_{\widetilde{G}}[V(\mathcal{W}^{\ast})\setminus W_{\widehat{z}}^{\ast}]\cup \bigcup(\widetilde{R}_j\colon j\in [N])\cup A.$$
        By \cref{R10} and \cref{BasicLemSep3}, we have,
        \begin{align*}
            \asep(\widetilde{G}-(N_{\widetilde{G}}[V(\mathcal{W})\setminus W_z]\cup A),&N_{\widetilde{G}}[W_z],B)\\
            &\geq  \asep(\widetilde{G}-(N_{\widetilde{G}}[V(\mathcal{W}^{\ast})\setminus W_{\widehat{z}}^{\ast}]\cup \bigcup(\widetilde{R}_j\colon j\in [N])\cup A),N_{\widetilde{G}}[W_{\widehat{z}}^{\ast}],B)   \\       
            &\geq s_N \geq s .
        \end{align*}
        Thus \cref{W2} holds for every $z\in L\setminus L_v$.
        
        Now consider $y\in L_v$. By \cref{R7}, $R_{\phi(y)}$ is anti-complete to $R_a$ for all $a\in S_2\setminus \{{\phi(y)}\}$. Thus, by \cref{Claimleavesbetween},
        $$\left(N_{\widetilde{G}}[\bigcup(R_a\colon a\in S_2\setminus\{{\phi(y)}\})]\cap N_{\widetilde{G}}[R_{\phi(y)}]\right)\setminus \mathcal{X}_N=\emptyset.$$
        As such, we have
        $$N_{\widetilde{G}}[\bigcup(W_a\colon a\in L_v\setminus\{y\})]\subseteq N_{\widetilde{G}}[V(\mathcal{W}^{\ast})]\cup  \bigcup(\widetilde{R}_a\colon a\in [N]\setminus\{{\phi(y)}\}) \cup A.$$
        Hence
         $$N_{\widetilde{G}}[V(\mathcal{W})\setminus W_y]\subseteq N_{\widetilde{G}}[V(\mathcal{W}^{\ast})]\cup \bigcup(\widetilde{R}_a\colon a\in [N]\setminus\{{\phi(y)}\})\cup A.$$
        By \cref{R9}, $\widetilde{R}_{\phi(y)}\subseteq N_{\widetilde{G}}[W_y]$. Therefore, by \cref{R12} and \cref{BasicLemSep3},
        \begin{align*}
            \asep(\widetilde{G}-(N_{\widetilde{G}}[V(\mathcal{W})\setminus W_y]\cup A),&N_{\widetilde{G}}[W_y],B)\\
            &\geq \asep(\widetilde{G}-(N_{\widetilde{G}}[V(\mathcal{W}^{\ast})]\cup \bigcup(\widetilde{R}_a\colon a\in [N]\setminus\{{\phi(y)}\})\cup A),\widetilde{R}_{\phi(y)},B)\\
            &\geq s_N\geq s.
        \end{align*}
        Thus \cref{W2} holds for every $y\in L_v$ as well. We have verified \cref{W1}--\cref{W4}, completing the proof of the claim.
   \end{proof}
   \cref{ClaimModel} gives outcome \cref{outcome_model}. This completes the induction step, and hence the proof of the lemma.
\end{proof}

\subsection{Proof of \cref{AlphaMengerForestMinor}}\label{SecProofAlphaMenger}

We say that a graph $H$ is a \defn{special apex-tree} if there is a vertex $z\in V(H)$ such that $H-z$ is a tree and $N_H(z)$ is a subset of the leaves of $H-z$. 

\begin{lem}\label{specialapex}
    For every $n\in \NN$, every $n$-vertex apex-forest $H$ is an induced minor of a special apex tree $J$ where $|V(J)|\leq 2n+1$.
\end{lem}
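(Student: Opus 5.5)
Let $H$ be an $n$-vertex apex-forest with apex $z$, so $F:=H-z$ is a forest on $n-1$ vertices. The plan is to build $J$ explicitly from $H$: turn $F$ into a tree, hang a private pendant leaf on each neighbour of $z$, and join $z$ only to these new leaves. Deleting the connecting vertices and contracting each pendant edge should then recover $H$ as an induced minor.

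\textbf{Step 1: make $F$ a tree.} Let $F_1,\dots,F_c$ be the components of $F$. Add one new vertex $w$ adjacent to exactly one vertex $r_i$ of each $F_i$. The result $T_0$ is a tree on at most $n$ vertices (when $n\geq 2$), and $T_0-w=F$ as an induced subgraph. If $F$ is already connected we may skip $w$, although keeping it is harmless.

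\textbf{Step 2: pendant leaves for the apex.} For each $u\in N_H(z)$, add a new vertex $u'$ adjacent only to $u$ in the tree. Call the resulting tree $T$. Then every $u'$ is a leaf of $T$. Define $J$ from $T$ by adding $z$ adjacent exactly to $\{u'\colon u\in N_H(z)\}$. Since $N_J(z)$ consists of leaves of $T=J-z$, the graph $J$ is a special apex-tree. Counting vertices gives $|V(J)|\leq (n-1)+1+(n-1)+1=2n$ (the terms are $F$, $w$, the new leaves, and $z$), which is within the bound $2n+1$.

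\textbf{Step 3: exhibit the induced minor.} Delete $w$ to obtain an induced subgraph. Then contract each edge $uu'$, giving branch sets $\{u,u'\}$ for $u\in N_H(z)$, $\{v\}$ for all other $v\in V(F)$, and $\{z\}$. Each branch set is connected. Two branch sets of vertices of $F$ are adjacent exactly when the corresponding vertices are adjacent in $F$: the only new edges touch the $u'$, and each $u'$ is adjacent only to $u$ and $z$. The set $\{z\}$ is adjacent to the branch set of $u$ exactly when $u\in N_H(z)$, because $z$ sees only the $u'$. So non-edges of $H$ give anti-complete pairs, and this is an induced minor model of $H$.

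The main obstacle is only bookkeeping. One must check that adding $w$ creates no unwanted adjacency after it is deleted, which is immediate since it is deleted. One should also handle degenerate cases: $n=1$, where $J=K_1$ or $K_2$ works, and the choice of apex when $H$ itself is a forest. In the latter case, any vertex can serve as $z$ since the definition allows $H-z$ to be any forest.
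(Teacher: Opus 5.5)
Your proposal is correct and follows essentially the same construction as the paper: connect the components of $H-z$ through one new vertex, attach a private pendant leaf to each neighbour of the apex, make the apex adjacent only to those leaves, then delete the connector and contract the pendant edges. The differences are cosmetic. You reuse the apex instead of adding a fresh vertex, and you get the slightly tighter count $2n$. Your separate treatment of $n=1$ is unnecessary, since the general construction already covers that case.
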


\begin{proof}
    Let $a\in V(H)$ be such that $F:=H-a$ is a forest. Let $T_1,\dots, T_m$ be the components of $F$. For each $i\in [m]$, choose a vertex $r_i\in V(T_i)$. Let $T$ be obtained from $F$ by adding a new vertex $x$ adjacent to each of $r_1,r_2,\dots,r_m$. Then $T$ is a tree that contains $F$ as an induced subgraph. For each $v\in N_H(a)$, add to $T$ a new leaf $v^{\ast}$ that is adjacent only to $v$. Finally, add a new vertex $z$ that is adjacent to the vertices $v^{\ast}$ for each $v\in N_H(a)$. Let $J$ be the resulting graph. Since $J-z$ is a tree and $N_J(z)$ is a subset of the leaves of $J-z$, it follows that $J$ is a special apex tree. Furthermore, we have $|V(J)|\leq |V(F)|+|N_J(z)|+|\{x,z\}|\leq 2n+1$. Finally, $J$ contains $H$ as an induced minor. First delete the vertex $x$ from $J$ then, for each $v\in N_H(a)$, contract the edge $vv^{\ast}$. The resulting graph is isomorphic to $H$, as required.
\end{proof}

\begin{lem}\label{LemIteratePaths}
    There exists a function $f_A$ such that, for all $k\in \NN_0$ and $n,s,t\in \NN$ where $s>2t$, the following holds. Let $H$ be a special apex-tree on $n+1$ vertices. Let $G$ be a $K_{1,t}$-free graph that excludes $H$ as an induced minor. Let $A,B\subseteq V(G)$, where $G[B]$ is connected. If $\asep(G,A,N_G[B])\geq f_A(k,n,s,t)$ then $G$ contains $k$ pairwise anti-complete $(A,N_G[B])$-paths $P_1,\dots, P_k$ such that 
    $$\asep(G-N_G[\bigcup(P_i \colon i\in [k])],A,N_G[B])\geq s.$$
\end{lem}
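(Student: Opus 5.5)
Induction on $k$. The base case $k=0$ holds with $f_A(0,n,s,t):=s$, since the empty collection of paths gives the required separator bound directly. For the step, let $z\in V(H)$ be the apex with $T:=H-z$ a tree and $N_H(z)\subseteq L$, where $L$ is the leaf set of $T$ rooted at some non-leaf $x$. Put $m:=|M|$ and $\ell:=|L|$. Let $c:=f_B(n+1,t)$ from \cref{ExcludedBipartite}. Set $s':=f_A(k-1,n,s,t)$ and
\[
f_A(k,n,s,t):=f_{\mathcal{W}}\bigl(c,m,\ell,\max\{s',2t+1\},t\bigr)+\text{(small slack)}.
\]
The plan is to produce a single $(A,N_G[B])$-path $P$ with $\asep(G-N_G[P],A,N_G[B])\geq s'$. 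We then apply induction to $G':=G-N_G[P]$ with the sets $A\cap V(G')$ and $B':=B\setminus N_G[P]$. This needs care: $N_{G'}[B']$ should still contain $N_G[B]\cap V(G')$, and $G'[B']$ need not be connected. We therefore shrink $B'$ to a component of $G[B]-N_G[P]$, or better, we arrange $P$ to end just inside $N_G[B]$ so that $P$ avoids most of $B$. Induction then yields $P_2,\dots,P_k$ in $G'$. These are anti-complete to $P$ because they lie outside $N_G[P]$, and the final separator bound transfers by \cref{BasicLemSep3}.

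To find $P$, apply \cref{TechnicalInducedMenger} with the sets $A$ and $N_G[B]$ and the rooted tree $(T,x)$. Outcome \cref{outcome_Knn} is excluded by \cref{ExcludedBipartite}, because $G$ excludes $H$ and is $K_{1,t}$-free. Outcome \cref{outcome_path} gives $P$ directly. So the key step is to show that outcome \cref{outcome_model} forces an induced $H$ minor. This is where connectivity of $G[B]$ is used, and where we must keep the paths anti-complete.

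Suppose we have $\widetilde{G}$ and $\mathcal{W}$ satisfying \cref{W1}--\cref{W4}. For each leaf $y\in N_H(z)$, condition \cref{W2} gives $\asep\geq s>2t>0$ from $N_{\widetilde G}[W_y]$ to $N_G[B]$ in $\widetilde G-(N[V(\mathcal W)\setminus W_y]\cup A)$. Take a shortest such path $Q_y$, extend $W_y$ by $Q_y$ minus its last vertex, and route the end into $B$. The endpoint lies in $N[B]$, so it is in $B$ or adjacent to $B$. Then let the apex branch set be $G[B]$ together with the end vertices of the $Q_y$. This set is connected since $G[B]$ is connected.

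The difficulty is to make $W_y\cup Q_y$ anti-complete to the other branch sets and to the other $Q_{y'}$, and to make the apex branch set anti-complete to the non-neighbours of $z$. \cref{W2} already keeps $Q_y$ away from the other branch sets. \cref{W4} together with \cref{W1} (which separates $V(\mathcal W)$ from $N[B]$ by deleting its neighbourhood) should give that $B$ is anti-complete to $V(\mathcal W)$; we may need to check this and possibly use \cref{W1} to argue that $N[\mathcal W]$ misses $B$ entirely.

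We build the $Q_y$ sequentially. After choosing $Q_{y_1},\dots,Q_{y_{j-1}}$, we delete their neighbourhoods minus the portion near $B$, and we use the large $\asep$ value to keep paths available. A geodesic path reaching $N[B]$ has neighbourhood meeting $N[B]$ in only the last few vertices, so it costs only a bounded independence number. This costs at most a bounded amount per leaf, which we absorb by demanding a larger $s$ in the call to \cref{TechnicalInducedMenger}: roughly $s\geq \ell\cdot(\text{cost})+\dots$.

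I expect this sequential routing to be the main obstacle. The separators in \cref{W2} are per-leaf, not simultaneous, so deleting $N[Q_{y_1}]$ could destroy the separation for $y_2$. The fallback is a drop argument in the spirit of the claim in \cref{TechnicalInducedMenger}. If deleting $N[Q_{y_1}]$ (truncated before $B$) drops some relevant separator a lot, then \cref{BasicLemSep1} gives a large $\asep$ from $Q_{y_1}$ into $N[B]$ avoiding $A$. Combined with $W$ and an $A$-attachment via \cref{W3}, this yields a path $P$ satisfying \cref{outcome_path}, i.e. the desired path anyway.

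With $H$ excluded, outcome \cref{outcome_model} cannot finish, so the single path $P$ always exists. Iterating through the induction on $k$ then completes the proof.
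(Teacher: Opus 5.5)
There are two genuine gaps, and the paper's proof avoids both with simple choices.

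First, the order of your induction breaks the hypotheses of the recursive call. You extract one path $P$ and then recurse in $G-N_G[P]$ with $B'=B\setminus N_G[P]$. But $P$ ends in $N_G[B]$, so $N_G[P]$ always meets $B$. If $|B|=1$ then $B'=\emptyset$. In general, a component of $G[B]-N_G[P]$ can have a tiny closed neighbourhood, so neither connectivity nor the lower bound on $\asep(G-N_G[P],A,N_{G-N_G[P]}[B'])$ survives. Making $P$ end ``just inside'' $N_G[B]$ does not change this. The paper reverses the order. It first applies the induction hypothesis with the larger parameter $\widehat{f}_{\mathcal{W}}(c,n,s,t)$, which is the maximum of $f_{\mathcal{W}}(c,m',\ell',s,t)$ over tree-pairs with $m'+\ell'=n$. (This also fixes the fact that your $f_A$ depends on $m,\ell$ rather than only on $n$.) That gives $P_1,\dots,P_{k-1}$. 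It then applies \cref{TechnicalInducedMenger} to $G'=G-N_G[\bigcup_i P_i]$ with terminal sets $A$ and the \emph{original} $N_G[B]$. The set $B$ never needs to lie in $G'$, because the apex branch set is built in $G$. Condition (W4) for the target $N_G[B]$ says $V(\mathcal{W})\cap N_G[B]=\emptyset$, i.e.\ $B$ is anti-complete to $V(\mathcal{W})$. This is the fact you were unsure about, and it follows from (W4) alone.

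Second, your treatment of outcome (O1) creates an obstacle that need not exist, and your fallback does not resolve it. By appending $Q_y$ (minus its last vertex) to the leaf branch set $W_y$, you force the $Q_y$ for distinct $y\in N_H(z)$ to be pairwise anti-complete. The per-leaf condition (W2) cannot give that. Your drop argument is not substantiated. A large $\asep$ from $N[Q_{y_1}]$ to $N_G[B]$ in the graph minus $A$ (via \cref{BasicLemSep1}) does not produce an $(A,N_G[B])$-path whose closed neighbourhood can be deleted while keeping the $(A,N_G[B])$-separation large. Outcome (O2) only ever arises from an assumption that the $(A,B)$-separation itself stays large.

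Instead, put every connecting path into the apex branch set. For each $y\in N_H(z)$, let $P_y$ be a geodesic $(N_{\widetilde G}(W_y),N_G[B])$-path in $\widetilde G-N_{\widetilde G}[V(\mathcal{W})\setminus W_y]$. It exists by (W2). It avoids $W_y$, since only its first vertex lies in $N_{\widetilde G}(W_y)$ and its last vertex is outside $W_y$ by (W4). Set $W_z:=G[B\cup\bigcup_y V(P_y)]$. The $P_y$ may now intersect or touch one another freely. $W_z$ is connected because $G[B]$ is connected and each $P_y$ ends in $N_G[B]$. $W_z$ touches exactly the branch sets $W_y$ with $y\in N_H(z)$, because each $P_y$ is anti-complete to $V(\mathcal W)\setminus W_y$ and $B$ is anti-complete to $V(\mathcal W)$. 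Together with $\mathcal W$ this is an induced minor model of $H$ in $G$. So (O1) is impossible outright, with no sequential routing and no extra slack in $s$.
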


\begin{proof}
    Let $c:=f_B(n+1,t)$ where $f_B$ is the function from \cref{ExcludedBipartite}. By \cref{ExcludedBipartite}, $G$ is $K_{c,c}$-induced-minor-free. Define 
    $$\widehat{f}_{\mathcal{W}}(c,n,s,t):=\max\{f_{\mathcal{W}}(c,m',\ell',s,t)\colon m'+\ell'=n\text{ and $(m',\ell')$ is a tree-pair}\}$$ 
    where $f_{\mathcal{W}}$ is the function from \cref{TechnicalInducedMenger}. 
    Set $f_A(0,n,s,t):=s$ and, for $k\geq 1$, set
    $$f_A(k,n,s,t):=f_A(k-1,n,\widehat{f}_{\mathcal{W}}(c,n,s,t),t).$$ 
    
    We prove the lemma by induction on $k$. The case $k=0$ is immediate. So assume that $k>0$, the claim holds for $k-1$, and that $\asep(G,A,N_G[B])\geq f_A(k,n,s,t)$.

    By the induction hypothesis applied with $k-1$ and parameter $\widehat{f}_{\mathcal{W}}(c,n,s,t)$, there are $k-1$ pairwise anti-complete $(A,N_G[B])$-paths $P_1,\dots, P_{k-1}$ in $G$ such that, for $G':=G-N_G[\bigcup(P_i\colon i\in [k-1])]$, we have 
    $$\asep(G',A,N_{G}[B])\geq \widehat{f}_{\mathcal{W}}(c,n,s,t).$$
    Let $z\in V(H)$ be such that $T:=H-z$ is a tree on $n$ vertices and $N_H(z)$ is a subset of the leaves of $T$. Fix a root $x\in V(T)$. Let $M$ and $L$ be respectively the set of non-leaves and leaves in $T$. Set $m=|M|$ and $\ell=|L|$. Apply \cref{TechnicalInducedMenger} to $G'$ with terminal sets $A$ and $N_{G}[B]$ and with the rooted tree $(T,x)$. Since $G'$ is an induced subgraph of $G$, it is still $K_{1,t}$-free and $K_{c,c}$-induced-minor-free. Thus one of the following outcomes occurs:
    \begin{enumerate}[label={$(O{\arabic*}')$}]
        \item\label{O1'} there is an induced subgraph $\widetilde{G}$ of $G'$ containing an induced minor model $\mathcal{W}=(W_y\colon y\in V(T))$ of $(T,x)$ satisfying \cref{W1}--\cref{W4};
        \item\label{O2'} there is an $(A,N_{G}[B])$-path $P_k$ in $G'$ such that $\asep(G'-N_G[P_k],A,N_{G}[B])\geq  s$; or
        \item\label{O3'} $G'$ contains $K_{c, c}$ as an induced minor.
    \end{enumerate} 
    Outcome \cref{O3'} is impossible since $G'$ is $K_{c,c}$-induced-minor-free. If \cref{O2'} occurs, then $P_k\subseteq G'$, so it is anti-complete to $P_1,\dots, P_{k-1}$. Hence $P_1,\dots, P_k$ are pairwise anti-complete $(A,N_G[B])$-paths in $G$, satisfying the desired conclusion. 
    
    It remains to rule out \cref{O1'}. Suppose that \cref{O1'} occurs. We shall extend the model of $T$ into an induced minor model of $H$, contradicting the assumption that $G$ excludes $H$. 
    
    For each $y\in N_H(z)$, let $P_y$ be a geodesic $(N_{\widetilde{G}}(W_y),N_{\widetilde{G}}[B])$-path in $\widetilde{G}-N_{\widetilde{G}}[V(\mathcal{W})\setminus W_y]$. Since $s>0$ and $N_H(z)\subseteq L$, property \cref{W2} applied with $N_{G'}[B]$ in place of $B$ implies that such a path exists. Moreover, $P_y$ is anti-complete to $V(\mathcal{W})\setminus W_y$. Since $N_{\widetilde{G}}[B]\cap W_y=\emptyset$ by \cref{W4}, $P_y$ is vertex-disjoint from $W_y$. 
    
    Define $W_z:=G[B\cup \bigcup (V(P_y)\colon y\in N_H(z))]$. Since $G[B]$ is connected and each $P_y$ is adjacent to $B$, it follows that $W_z$ is a connected subgraph of $G$. By \cref{W4}, $N_{G}[B]\cap V(\mathcal{W})=\emptyset$ and so $B$ is anti-complete to $V(\mathcal{W})$. Thus $W_z$ is vertex-disjoint from $\mathcal{W}$, and adjacent only to the branch set indexed by vertices in $N_H(z)$. Therefore $\mathcal{W}\cup \{W_z\}$ is an induced minor model of $H$ in $G$, contradicting the assumption that $G$ is $H$-induced-minor-free. Hence \cref{O1'} cannot occur, thus completing the proof. 
\end{proof}

We now deduce \cref{AlphaMengerForestMinor} from \cref{LemIteratePaths}. The next theorem is a restatement of \cref{AlphaMengerForestMinor}.

\begin{thm}
    For every apex-forest $H$ and every $t\in \NN$, there is a function $f_{H,t}$ such that the following holds. For every $K_{1,t}$-free $H$-induced-minor-free graph $G$, for every $k\in \NN$, for any pair of induced paths $P_1,P_2$ in $G$, if $\asep(G,N_G[P_1],N_G[P_2])\geq f_{H,t}(k)$, then $G$ contains $k$ pairwise anti-complete $(N_G[P_1],N_G[P_2])$-paths. 
\end{thm}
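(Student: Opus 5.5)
The plan is to reduce the statement to \cref{LemIteratePaths} via \cref{specialapex}. Given an apex-forest $H$ on $n$ vertices, \cref{specialapex} provides a special apex-tree $J$ on at most $2n+1$ vertices that contains $H$ as an induced minor. Any $H$-induced-minor-free graph $G$ is therefore also $J$-induced-minor-free. If $J$ has fewer than $2n+1$ vertices, I would pad it by adding extra tree vertices; for instance, I would subdivide an edge of $J-z$ that does not end at a neighbour of $z$, or attach a new leaf to a non-leaf. This keeps $J$ a special apex-tree that still contains $H$ as an induced minor, and it lets us fix $|V(J)|=n'+1$ with $n'$ depending only on $n$. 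Alternatively, I could simply take the maximum of the bound over the finitely many possible sizes.

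Next I would instantiate \cref{LemIteratePaths} with $A:=N_G[P_1]$ and $B:=V(P_2)$. Since $P_2$ is a path, $G[B]$ is connected, and $N_G[B]=N_G[P_2]$, which matches the hypothesis $\asep(G,A,N_G[B])$ exactly. I would choose $s:=2t+1$ to meet the requirement $s>2t$, and define
\[
f_{H,t}(k):=f_A(k,n',2t+1,t).
\]
If $\asep(G,N_G[P_1],N_G[P_2])\geq f_{H,t}(k)$, then \cref{LemIteratePaths} returns $k$ pairwise anti-complete $(N_G[P_1],N_G[P_2])$-paths, which is what the statement asks for. The extra separation conclusion of the lemma can be discarded.

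I do not expect a real obstacle, since the heavy lifting has already been done in \cref{TechnicalInducedMenger} and \cref{LemIteratePaths}. The only points needing care are the following.
\begin{enumerate}
\item We must confirm that \cref{LemIteratePaths} allows $A$ to be an arbitrary set. It does, because $A$ is unrestricted there. Only $B$ needs $G[B]$ connected, which is why I assign the closed neighbourhood side asymmetrically: $A=N_G[P_1]$ and $B=P_2$.
\item We must handle the exact vertex count $n+1$ of the special apex-tree, which the padding or maximum described above takes care of.
\item If $P_1$ and $P_2$ are so close that $N_G[P_1]\cap N_G[P_2]\neq\emptyset$, then $\asep$ is still well defined. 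In that case a common vertex forms a single-vertex path, and the lemma handles it uniformly, so no separate case analysis is needed.
\end{enumerate}
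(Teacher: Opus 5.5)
Your proposal is correct and follows the paper's proof. Like the paper, you apply \cref{specialapex} to obtain a special apex-tree $J$ with $|V(J)|\leq 2n+1$, and then invoke \cref{LemIteratePaths} with $A:=N_G[P_1]$, $B:=V(P_2)$ and $s:=2t+1$. For the varying size of $J$, the paper uses exactly your second option, $f_{H,t}(k):=\max\{f_A(k,n',2t+1,t)\colon n'\in[2n]\}$, so the padding step is unnecessary.
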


\begin{proof}
    Let $n:=|V(H)|$. By \cref{specialapex}, there exists a special apex tree $J$ with $|V(J)|\leq 2n+1$ such that $H$ is an induced minor of $J$. Thus $G$ is $J$-induced-minor-free. Define $f_{H,t}(k):=\max\{f_A(k,n',2t+1,t)\colon n'\in [2n]\}$ where $f_A$ is the function from \cref{LemIteratePaths}. By setting $A:=N_G[P_1]$ and $B:=P_2$, it follows from \cref{LemIteratePaths} that if $\asep(G,N_G[P_1],N_G[P_2])\geq f_{H,t}(k)$, then $G$ contains $k$ pairwise anti-complete $(N_G[P_1],N_G[P_2])$-paths, as required.
\end{proof}

\section{Erd\H{o}s--P\'osa and Tree Independence Number}

Graphs of bounded treewidth have a useful Erd\H{o}s--P\'osa type property: for any collection $\HH$ of connected subgraphs in a graph $G$ of bounded treewidth, $G$ contains many pairwise vertex-disjoint elements of $\HH$, or there exists a set $X\subseteq V(G)$ of bounded size such that $G-X$ contains no element of $\HH$ (see \cite[(8.7)]{robertson1986planar}). 

The next theorem extends this property to graphs of bounded tree independence number, with `vertex-disjointness' replaced by `anti-completeness'. This will be a key tool used in the next section where we convert a tree-decomposition into a path-decomposition. Recall that $R(r,s)$ is the Ramsey function (see \cref{RamseyTheorem}).

\begin{thm}\label{HittingSet}
    For all $d,k,w\in \NN$, for every graph $G$ with $\atw(G)< k$, for any collection $\HH$ of connected induced subgraphs in $G$ where $\omega(H)< w$ for each $H\in \HH$, one of the following occurs:
    \begin{enumerate}
        \item $G$ contains $d$ pairwise anti-complete elements of $\HH$; or
        \item There exists a set $X\subseteq V(G)$ with $|X|< (k+R(k,w))(d-1)$ such that $G-N_G[X]$ contains no element of $\HH$.
    \end{enumerate}
\end{thm}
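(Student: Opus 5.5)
We argue by induction on $d$, following the classical treewidth argument of Robertson and Seymour. For $d=1$ the second outcome holds with $X=\emptyset$: the bound is $|X|<0$ only formally, so we read the case $d=1$ as ``$\HH=\emptyset$ or some element exists''. If $\HH\neq\emptyset$, a single element is a set of $1$ pairwise anti-complete elements, so outcome~1 holds. Now let $d\geq 2$. Fix a tree-decomposition $(B_x\colon x\in V(T))$ of $G$ with $\alpha(B_x)<k$ for every bag, and root $T$ at an arbitrary node $r$.

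For each $H\in\HH$, the set $T_H:=\{x\colon B_x\cap V(H)\neq\emptyset\}$ induces a subtree of $T$, since $H$ is connected. Let $x_H$ be its node closest to the root. Choose $H_0\in\HH$ with $x_0:=x_{H_0}$ as deep as possible, i.e.\ with no strict descendant of $x_0$ equal to $x_{H'}$ for any $H'\in\HH$. Then every $H'\in\HH$ that meets some bag in the subtree $T_{x_0}$ rooted at $x_0$ must meet $B_{x_0}$. Indeed, $T_{H'}$ is a subtree meeting $T_{x_0}$ whose top node is not a strict descendant of $x_0$, so $T_{H'}$ contains $x_0$.

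Next we choose $X_0$ so that $N_G[X_0]\supseteq B_{x_0}$. Since $\alpha(B_{x_0})<k$, a maximal independent set $I$ of $G[B_{x_0}]$ has $|I|<k$ and dominates $B_{x_0}$. Taking $X_0:=I$ gives $|X_0|<k$, and every $H'$ meeting $B_{x_0}$ is then hit by $N_G[X_0]$. This does not yet give the $R(k,w)$ term. The purpose of that term is to also kill every element touching the bag, i.e.\ meeting $N_G[B_{x_0}]$, so that the elements avoiding $N_G[X_0]$ are anti-complete to $H_0$. For this, we add a vertex set dominating $N_G(B_{x_0})$ within the relevant region; $R(k,w)$ should bound $\alpha$ of a suitable set there. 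Concretely, we take $Y$ to be the vertices of $H_0$ lying in $B_{x_0}$ together with their neighbours in $B_{x_0}$. Since $\omega(H_0)<w$ and $\alpha<k$, Ramsey's theorem (\cref{RamseyTheorem}) gives $|V(H_0)\cap B_{x_0}|<R(k,w)$. Set $X_0:=I\cup (V(H_0)\cap B_{x_0})$, so $|X_0|<k+R(k,w)$.

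Now let $G':=G-N_G[X_0]$ and $\HH':=\{H\in\HH\colon V(H)\cap N_G[X_0]=\emptyset\}$; these are connected induced subgraphs of $G'$, and $\atw(G')<k$ by \cref{IndependentMinor}. We claim every $H'\in\HH'$ is anti-complete to $H_0$. First, $H'$ avoids $B_{x_0}$, so by the choice of $x_0$ it lies entirely in bags outside $T_{x_0}$ minus $B_{x_0}$. Any edge between $H'$ and $H_0$ lies in some bag. Since $B_{x_0}$ separates the two sides, the endpoint in $H_0$ must then lie in $B_{x_0}$ or outside the subtree. The delicate point is when $H_0$ itself extends above $x_0$, which cannot happen, since $x_0$ is the top of $T_{H_0}$. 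So $H_0\subseteq\bigcup_{y\in T_{x_0}}B_y$, and an edge $uv$ with $u\in H_0$, $v\in H'$ forces $u\in B_{x_0}$. Hence $u\in X_0$ and $v\in N_G[X_0]$, a contradiction.

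We then apply induction to $G'$ with $d-1$. Either $G'$ contains $d-1$ pairwise anti-complete members of $\HH'$, which together with $H_0$ give $d$ members; or there is $X'$ with $|X'|<(k+R(k,w))(d-2)$ hitting $\HH'$ in the sense $G'-N_{G'}[X']$ contains none. In the latter case $X:=X_0\cup X'$ works, because $N_G[X]\supseteq N_G[X_0]\cup N_{G'}[X']$. The main obstacle is the anti-completeness step above. In particular, we must ensure that the Ramsey bound applies to $V(H_0)\cap B_{x_0}$ rather than to a larger set, and this is precisely where the hypothesis $\omega(H)<w$ is used.
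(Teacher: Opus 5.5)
Your proposal is correct and follows the paper's proof essentially step for step: you pick $H_0$ whose top node $x_0$ is deepest, let $X_0$ be a maximal independent set of $B_{x_0}$ together with the fewer than $R(k,w)$ vertices of $V(H_0)\cap B_{x_0}$, and recurse on $G-N_G[X_0]$. The formal $|X|<0$ glitch at $d=1$ is shared with the paper and is harmless, since for $d=2$ one can simply take $X=X_0$. The auxiliary set $Y$ is never used, and your own observation already gives anti-completeness without the Ramsey term: every bag meeting $H'$ lies outside the subtree at $x_0$, while every bag meeting $H_0$ lies inside it, which rules out any common vertex or edge. So adding $V(H_0)\cap B_{x_0}$, as the paper also does, is harmless but not needed, and this is not actually where $\omega(H)<w$ must be used.
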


\begin{proof}
    We proceed by induction on $d$. If $\HH$ is empty, then the second outcome occurs trivially for all $d$ by setting $X=\emptyset$. If $\HH$ is non-empty, then any element of $\HH$ certifies the first outcome for $d=1$. Thus we may assume that $d\geq 2$ and $\HH\neq \emptyset$. 
    
    Let $(T,B_x\colon x\in V(T))$ be a tree-decomposition of $G$ such that $\alpha(B_x)< k$ for all $x\in V(T)$. Fix a root $r\in V(T)$. For each $H\in \HH$, define $C_H:= T[\{x\in V(T)\colon V(H)\cap B_x\neq \emptyset\}]$. Since $H$ is connected, $C_H$ is a connected subtree of $T$. Choose $\widetilde{H}\in \HH$ such that $\dist_T(r,C_{\widetilde{H}})$ is maximised and let $z\in V(C_{\widetilde{H}})$ be the unique node attaining this distance.

    Let $X_1$ be a maximal independent set in $B_z$. Then $|X_1|< k$ and $B_{z}\subseteq N_G[X_1]$. Let $X_2:= B_z\cap V(\widetilde{H})$. Since $\omega(\widetilde{H})< w$, we have $|X_2|< R(k,w)$ by \cref{RamseyTheorem}.
    Define $X':=X_1\cup X_2$. Then $|X'|< k+R(k,w)$. Set $G':=G-N_G[X']$.

    Suppose that $G'$ does not contain $d-1$ pairwise anti-complete elements of $\mathcal{H}$. By induction, there exists a set $X''\subseteq V(G')$ with $|X''|< (k+R(k,w))(d-2)$ such that $G'-N_{G'}[X'']$ contains no elements of $\mathcal{H}$. By setting $X=X'\cup X''$, it follows that $G-N_G[X]$ also contains no element of $\mathcal{H}$. Moreover, $|X|\leq |X'|+ |X''|< (k+R(k,w))(d-1)$, establishing the second outcome.

    Now assume that $G'$ does contain $d-1$ pairwise anti-complete elements $H^{(1)},\dots,H^{(d-1)}$ of $\mathcal{H}$. Fix $i\in [d-1]$. We claim that $\widetilde{H}$ is anti-complete to $H^{(i)}$. Since $H^{(i)}$ is contained in $G'$, we have $V(H^{(i)})\cap B_{z}=\emptyset$. Because $\widetilde{H}$ was chosen to maximise $\dist_T(r,C_{\widetilde{H}})$ and $C_{H^{(i)}}$ is connected, no node of $C_{H^{(i)}}$ is contained in a bag indexed by a descendant of $z$. By the choice of $z$, $C_{H^{(i)}}$ and $C_{\widetilde{H}}$ are vertex-disjoint and so $H^{(i)}$ and $\widetilde{H}$ are also vertex-disjoint. Moreover, if $G$ contains an edge $uv$ with $u\in V(H^{(i)})$ and $v\in V(\widetilde{H})$, then $v\in B_{z}$. Thus $v\in X_2$, forcing $u\in N_G[X']$, contradicting $H^{(i)}\subseteq G'$. Hence $\widetilde{H}$ is anti-complete to each $H^{(i)}$, yielding the first outcome.
\end{proof}

\section{Tree Decomposition to Path Decomposition}\label{ForestPathAlpha}

In this section, we prove \cref{MainThmForest}.

\MainThmForest*

Let $T_{h,d}$ denote the $d$-ary tree of height $h$. By \cref{MainThmApexForest}, every $K_{1,t}$-free graph with no $T_{h,d}$ induced minor has bounded tree independence number. Since every forest is an induced minor of some $T_{h,d}$, \cref{MainThmForest} follows immediately from the next lemma. The proof of the lemma is inspired by the main result of \cite{DHJMMW}.

\begin{lem}
    There exist functions $f,g$ where $f(d,h,k,t)\geq g(d,h,k,t)\geq t$ such that the following holds for all $d,h,k,t\in\NN$ with $d+h\geq 3$. Let $G$ be a graph with $\atw(G)< k$ and no $T_{h,d}$ induced minor. Suppose that $G$ contains a vertex $r\in V(G)$ satisfying $\alpha(N_G(r))\leq g(d,h,k,t)$ and that, for every $v\in V(G)\setminus \{r\}$, we have $\alpha(N_G[v])\leq t$. Then $G$ admits a path-decomposition with independence number at most $f(d,h,k,t)$, where the first bag contains $N_G[r]$.
\end{lem}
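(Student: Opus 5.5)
We argue by induction on $h$ (and, within a fixed $h$, on $|V(G)|$), following the strategy of \cite{DHJMMW} for converting excluded tree minors into path decompositions rooted at a vertex. The base cases ($h=1$, or small $d+h$) are handled directly. For instance, if $T_{1,d}=K_{1,d}$ is excluded as an induced minor, then every connected induced subgraph has few pairwise anti-complete pieces in any neighbourhood. In that case we can put all of $N_G[r]$ into a single first bag, together with a bounded number of neighbourhoods, and then proceed.

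\textbf{Setup.} Let $G'=G-N_G[r]$ and consider the components of $G'$ that attach to $N_G(r)$. Apply \cref{HittingSet} to the collection $\HH$ of connected induced subgraphs $C$ of $G'$ such that $G[C\cup N_G(C)]$ together with $r$ contains $T_{h-1,d}$ as an induced minor rooted suitably. To keep the clique number bounded, restrict to minimal such subgraphs. Minimal members are ``thin'': they are induced-minor models of a bounded tree whose branch sets are induced paths, so $\omega<w$ for some $w$ depending on $t$. Since $\alpha(N_G(r))\le g$ and vertices other than $r$ have $\alpha(N[v])\le t$, we then use the following observation. If $G$ contained $d$ pairwise anti-complete members of $\HH$, each reachable from $r$ through private neighbours, then we would obtain $T_{h,d}$ as an induced minor with root branch set $\{r\}$ plus connecting paths. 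The delicate point is anti-completeness of the connecting paths within $N_G(r)$, which we control using $\alpha(N_G(r))\le g$ and Ramsey-type pruning. This is excluded, so \cref{HittingSet} yields a set $X$ of bounded size with $N_G[X]$ hitting all of $\HH$. Each $N[x]$ has $\alpha\le t$ (or $\le g$ for $r$), so $\alpha(N_G[X])$ is bounded.

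\textbf{Building the path decomposition.} After deleting $N_G[r]\cup N_G[X]$, every remaining component $C$ has no $T_{h-1,d}$ induced minor ``hanging off'' the deleted set. We order the pieces along a path. Choose a connected set $Y$ containing $r$ and $X$ by joining the vertices of $X$ to $r$ via shortest paths. This spine is the only place where we have unbounded length, so we instead lay it out as a path $y_1,\dots,y_p$ and process the components attached in order. Each component $C$ of $G-N[Y]$ attaching near $y_j$ has a designated new root $r_C$, obtained by contracting $N(C)$ into a single vertex. Contracting yields an induced minor (by \cref{IndependentMinor}), keeps $\atw<k$, and gives $\alpha(N(r_C))\le$ a bounded multiple of $t$, which is where the separate bound $g$ on the root is used. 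By induction with $h-1$, each such component has a path decomposition whose first bag contains $N[r_C]$. These decompositions are concatenated along the spine, with each bag augmented by $N[y_j]\cup N[y_{j+1}]\cup N_G[X]\cup N_G[r]$ where needed. Every added set has bounded $\alpha$, so the result is a path decomposition with bounded independence number whose first bag contains $N_G[r]$.

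\textbf{Main obstacle.} The hardest step is the spine. Paths from $r$ to $X$ may be long, and components may attach to many spine vertices, which breaks the interval property when decompositions are concatenated. I would try first to choose $Y$ as a minimal connected set and argue, via the exclusion of $T_{h,d}$, that components of $G-N[Y]$ attach to a bounded-length window of the spine. Otherwise, three far-apart attachments together with $Y$ would give a large branching structure. If that fails, I would induct on $|V(G)|$ by splitting at a geodesic, with $N[\text{geodesic}]$ playing the role of the root neighbourhood. This is presumably why the hypothesis distinguishes $g$ for the root from $t$ for the other vertices.
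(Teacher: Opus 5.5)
\textbf{There is a genuine gap in the step you flag yourself: the proposal never produces a valid path-decomposition, and the mechanisms you sketch do not work.}

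\emph{Why the spine construction fails.} Components of $G-N_G[r]-N_G[X]$ (or of $G-N[Y]$) that do not touch $N_G(r)$ are not controlled by your family $\HH$. So they may still contain $T_{h-1,d}$, and induction on $h$ cannot be applied to them. Contracting $N(C)$ into a vertex $r_C$ is not an induced-minor operation unless $G[N(C)]$ is connected. Even after adding connectors, $\alpha(N(C))$ is unbounded when $C$ attaches along a long spine. Adding the sets $N[y_j]$ to bags breaks either the interval property or the $\alpha$ bound. Your fallback of using $N[\text{geodesic}]$ as a root neighbourhood fails for the same reason: that set has unbounded independence number.

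\emph{The missing idea: never lay out a spine.}
\begin{enumerate}
\item Take $X$ to be a \emph{minimal} subset of $N[X']$, with $X'$ from \cref{HittingSet}, whose deletion from $G-N_G[r]$ destroys every member of the family. Then $\alpha(X)\le t|X'|\le g$.
\item Split $G-N_G[r]-X$ into two parts. The first is the components $G_1,\dots,G_p$ that meet the second neighbourhood $M$ of $r$. These contain no member of the family, hence no $T_{h-1,d'}$ induced minor, so induction on $h$ applies to them with an arbitrary root (every vertex has $\alpha(N[v])\le t\le g$). The second is the rest, $J$, which is anti-complete to $N_G[r]$.
\item By minimality of $X$, each $v\in X$ reaches $M$ via a path $P_v$ with $P_v-v$ inside $\bigcup G_i$.
\item Contract $N_G[r]\cup\bigcup(P_v-v)$ to a single vertex $r'$ and delete the rest of the $G_i$. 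This gives an induced minor $G'$ on $\{r'\}\cup J\cup X$ with $N_{G'}(r')=X$ and fewer vertices than $G$. So induction at the \emph{same} $h$ applies, and this is exactly why the root carries the separate bound $g$. The long connecting paths are swallowed into the new root rather than laid out.
\item The decomposition is the bag $N_G[r]\cup X$, then the $\mathcal{B}_i$ with $N_G[r]\cup X$ added to every bag, then the decomposition of $G'$ with $r'$ removed. $X$ stays contiguous because the first bag of the last part contains $N_{G'}[r']\supseteq X$.
\end{enumerate}

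\emph{Further points that need repair.}
\begin{itemize}
\item \emph{Branching factor of the family.} Your family uses $T_{h-1,d}$. To turn $d$ anti-complete members into a $T_{h,d}$ induced minor, you must discard the subtrees touched by the path from $M$ into each model. You must also discard those touched by the connecting vertices $\widetilde M\subseteq N_G(r)$, which may touch several members. With branching exactly $d$ there is nothing to discard. The paper therefore uses $T_{h-1,(d+2)(t+1)}$ and prunes using $\alpha(N_G[u])\le t$ for $u\in N_G(r)$, not $\alpha(N_G(r))\le g$.
\item \emph{Clique bound.} Minimal members are not models whose branch sets are paths. Their clique number is bounded because a minimal branch set of a tree node has clique number bounded by that node's degree; a bounded contribution from the connecting path is then added.
\item \emph{Base case.} Yours is not an argument. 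For $h=1$, take BFS layers $V_i$ from $r$. Each has $\alpha(V_i)<d$, since otherwise contracting $V_0\cup\dots\cup V_{i-1}$ yields $K_{1,d}$. The bags $V_i\cup V_{i+1}$ then work.
\end{itemize}
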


\begin{proof} 
    For all $d,h,k,t\in \NN$, define
    \begin{align*}
        g(d,h,k,t)&:=(k+R(k,2(d+2)(t+1)+5))dt;\\
        f(d,1,k,t)&:=2d;\\
        f(d,h,k,t)&:=f((d+2)(t+1),h-1,k,t)+2g(d,h,k,t) \quad \quad \textrm{for $h\geq 2$.}
    \end{align*}
    We make no attempt to optimise these functions.

    We prove the claim by induction on pairs $(h,|V(G)|)$ in lexicographic order. 
    Fix $d,h,k,t$, $G$, and $r$ as in the statement. 
    We may assume that $G$ is connected, since otherwise we are done by applying induction to each of its components. The statement is trivial if $|V(G)|= 1$, so assume that $|V(G)|\geq 2$. Thus $|N_G[r]|\geq 2$.
    
    For the base case, suppose that $h=1$. For each $i\in \NN_0$, let $V_i:=\{v\in V(G):\dist_G(v,r)=i\}$. So $V_0=\{r\}$ and thus $\alpha(V_0)=1\leq d$. If $\alpha(V_i)\geq d$ for some $i\in \NN$, then by contracting the connected $G[V_0\cup\dots\cup V_{i-1}]$ into a vertex and taking an independent set of size $d$ in $V_i$, we obtain $T_{1,d}$ as an induced minor, a contradiction. Hence, $\alpha(V_i)\leq d$ for each $i\in \NN_0$. Let $B_i:=V_i\cup V_{i+1}$ for each $i\in \NN_0$. Then $(B_0,B_1,\dots)$ is a path-decomposition of $G$ with independence number at most $2d$ where the first bag contains $N_G[r]$, as desired. 
    
    Now assume that $h\geq 2$ and that the lemma holds for $h-1$. Let $M:=\{v\in V(G)\colon \dist_G(v,r)=2\}$ be the second neighbourhood of $r$ in $G$. Let $\mathcal{F}$ be the set of vertex-minimal connected induced subgraphs of $G-N_G[r]$ that contain both a vertex from $M$ and $T_{h-1,(d+2)(t+1)}$ as an induced minor. 
    
    \begin{claim}
        If $\mathcal{F}$ contains $d$ pairwise anti-complete elements $S_1,\ldots,S_d$, then $G$ contains $T_{h,d}$ as an induced minor. 
    \end{claim}

    \begin{proof}
        For each $i\in [d]$, let $\mathcal{W}_i:=(W^i_x: x\in V(T_{h-1,(d+2)(t+1)}))$ be an induced minor model of $T_{h-1,(d+2)(t+1)}$ in $S_i$.  We first construct within each $S_i$ an induced minor model of $T_{h-1,d(t+1)}$ whose root branch set contains a vertex from $M$. 
        
        Let $P_i$ be a geodesic $(\mathcal{W}_i,M\cap V(S_i))$-path in $S_i$. Note that $P_i$ may be a singleton. Since $P_i$ is geodesic, only its first and second vertex may be adjacent to $\mathcal{W}_i$. Indeed, if a later vertex was adjacent to $\mathcal{W}_i$, then we contradict $P_i$ being geodesic. Furthermore, as $\mathcal{W}_i$ is an induced minor model and $\alpha(N_G[v])\leq t$ for all $v\in V(S_i)$, $P_i$ is adjacent to at most $2t$ sub-models of $T_{h-2,(d+2)(t+1)}$ which are rooted at the children of the root node of $T_{h-1,d(t+1)}$. If $P_i$ is adjacent to none of these sub-models, then it is adjacent only to the root branch set, and as such, by adding $P_i$ to the root branch set, we obtain an induced minor model of $T_{h-1,(d+2)(t+1)}$ where the root branch set contains a vertex from $M$. If $P_i$ is adjacent to one of the sub-models of $T_{h-2,(d+2)(t+1)}$, then modify the branch set of the root node of $T_{h-1,(d+2)(t+1)}$ by taking the union of the branch set together with one of these children sub-model together with $P_i$. By first removing all other branch sets that are adjacent to $P_i$ and then pruning excess branch sets, we may obtain an induced minor model of $T_{h-1,d(t+1)}$ where the root branch set of $T_{h-1,d(t+1)}$ contains a vertex from $M$. 
        
        Repeating this for all $i\in [d]$, we obtain $d$ anti-complete induced models $(Z^i_x: x\in V(T_{h-1,d(t+1)}))$ of $T_{h-1,d(t+1)}$ whose root branch sets are adjacent (in $G$) to $N_G[r]$. For each $i\in [d]$, let $z_i$ denote the root of the $i^{th}$ copy of $T_{h-1,d(t+1)}$ and let $\widetilde{Z}^i$ be its corresponding branch set. Let $\widetilde{M}\subseteq N_G(r)$ be a minimal set of vertices such that, for each $i\in [d]$, $\widetilde{M}$ and $\widetilde{Z}^i$ are adjacent. By minimality, $|\widetilde{M}|\leq d$. Since $r$ is adjacent to all vertices in $\widetilde{M}$ and is anti-complete to $G-N_G[r]$, and $\alpha(N_G[v])\leq t$ for all $v\in V(G)\setminus \{r\}$, we have $\alpha(N_G[\widetilde{M}]\cap V(G-N_G[r]))\leq dt-1$.
        
        For each $i\in [d]$, consider the induced sub-model $(Z^i_x: x\in V(T_{h-2,d(t+1)})\setminus\{z_i\})$ of the graph obtained from $T_{h-2,d(t+1)}-z_i$. Since $T_{h-2,d(t+1)}-z_i$ contains $d(t+1)$ components, the subgraph of $G$ induced by the vertices in this model has $d(t+1)$ components. Since $\alpha(N_G[\widetilde{M}]\cap V(G-N_G[r]))\leq dt-1$, for each $i\in [d]$, at least $d$ of these components are anti-complete to $\widetilde{M}$. Therefore, $G-N_G[r]$ contains $d$ anti-complete induced minor models of $T_{h-1,d}$ where only the root branch set of each model is adjacent (in $G$) to $\widetilde{M}$. By setting $G[\{r\}\cup \widetilde{M}]$ to be the root branch set, we obtain an induced minor model of $T_{h,d}$, as claimed. 
    \end{proof}
    
    Since $G$ does not contain $T_{h,d}$ as an induced minor, it follows that $\mathcal{F}$ does not contain $d$ pairwise anti-complete elements. We now exploit the vertex minimality of the elements of $\mathcal{F}$ to bound their clique number.

    \begin{claim}
        For each $S\in \mathcal{F}$, we have $\omega(S)< 2(d+2)(t+1)+5$. 
    \end{claim}
    
    \begin{proof}
        Let $S\in \mathcal{F}$. 
        Because $S$ is vertex-minimal, there is a vertex-minimal induced minor model $(W_x: x\in V(T_{h-1,(d+2)(t+1)}))$ of $T_{h-1,(d+2)(t+1)}$ in $S$ and a vertex-minimal path $P$ from $M\cap S$ to the model such that every vertex of $S$ is either in the model or in $P$.  Fix $x\in V(T_{h-1,(d+2)(t+1)})$ and let $J\subseteq V(W_x)$ be an inclusion-wise minimal vertex set such that, for each neighbour $z$ of $x$ in $T_{h-1,(d+2)(t+1)}$, $J$ and $W_z$ are adjacent in $G$. Since $x$ has degree at most $(d+2)(t+1)+1$, we have $|J|\leq (d+2)(t+1)+1$. Since $W_x$ is vertex-minimal, removing any vertex from $W_x$ means that no component of the remainder graph contains all of $J$. This implies $\omega(S)\leq |J|\leq (d+2)(t+1)+1$. As $T_{h-1,(d+2)(t+1)}$ is triangle-free and the model is induced, any clique in $S$ is contained within the union of at most two branch sets together with $V(P)$. Since at most two vertices of $P$ are adjacent to the model, we have $\omega(S)\leq 2(d+2)(t+1)+4$.
    \end{proof}
     
    Applying \cref{HittingSet}, it follows that there exists a set $X'\subseteq V(G-N_G[r])$ with $|X'|\leq g(d,h,k,t)/t$ such that $G-N_G[r]-N_{G-N_G[r]}[X']$ contains no element of $\mathcal{F}$. Choose $X\subseteq N_{G-N_G[r]}[X']$ to be a minimal set of vertices such that $G-N_G[r]-X$ contains no element of $\mathcal{F}$. Then 
    $$\alpha(X)\leq \alpha(N_{G-N_G[r]}[X'])\leq t|X'|\leq g(d,h,k,t).$$

    Let $G_1,\dots,G_p$ be the components of $G-N_G[r]-X$ that contain a vertex from $M$. If $G_i$ contains $T_{h-1,(d+2)(t+1)}$ as an induced minor for some $i\in [p]$, then $G_i$ has a vertex-minimal connected induced subgraph that contains a vertex from $M$ and $T_{h-1,(d+2)(t+1)}$ as an induced minor, contradicting our choice of $X$. Thus, no $G_i$ contains $T_{h-1,(d+2)(t+1)}$ as an induced minor. Moreover, since each $G_i$ is an induced subgraph of $G-N_G[r]$, we have $\alpha(N_{G_i}[v]) \leq t$ for all $v\in V(G_i)$. Therefore, by rooting each $G_i$ arbitrarily, we may apply induction (on $h$) to deduce that each $G_i$ has a path-decomposition $\mathcal{B}_i$ with independence number at most $f((d+2)(t+1),h-1,k,t)$.
    
    Let $J$ be the set of vertices of all components of $G-N_G[r]-X$ that have no vertex in $M$. Then $J$ and $N_G[r]$ are anti-complete.
    
    Consider a vertex $v\in X$. By the minimality of $X$, the graph $G-N_G[r]-(X\setminus\{v\})$ contains an induced connected subgraph $Y_v$ that contains $v$ and a vertex $r_v\in M$ (and contains $T_{h-1,d(t+1)}$ as an induced minor). Let $P_v$ be a $(v,r_v)$-path in $Y_v$. If $v\neq r_v$, then $P_v-\{v\}$ contains a vertex from $M$ and so it is contained in some $G_i$, and thus $P_v-\{v\}$ is vertex-disjoint from $X$ while also being anti-complete to $J$. Moreover, the subgraph of $G$ induced by
    $N_G[r]\cup \bigcup(V(P_v)\setminus \{v\}:v\in X)$ is connected and is contained in $G-X-J$. Let $G'$ be obtained from $G$ by contracting $N_G[r] \cup \bigcup(P_v\setminus\{v\}:v\in X)$ into a vertex $r'$, and deleting any remaining vertices not in $J\cup X$. Thus $V(G')=\{r'\}\cup J\cup X$. 

    We claim that $G'$ has the properties needed for us to apply induction (on $|V(G')|$).
    \begin{claim}
        The graph $G'$ satisfies:
        \begin{enumerate}[label=(\arabic*)]
            \item $|V(G')|< |V(G)|$;
            \item $\atw(G')< k$;
            \item $G'$ has no $T_{h,d}$ induced minor;
            \item $N_{G'}(r')=X$ and hence $\alpha(N_{G'}(r'))\leq g(d,h,k,t)$; and
            \item $\alpha(N_{G'}[v]) \leq t$ for all $v\in V(G')\setminus\{r'\}$.
        \end{enumerate}
    \end{claim}
    
    \begin{proof}
         Property (1) holds since $|N_G[r]|\geq 2$. Properties (2) and (3) follow from \cref{IndependentMinor} and the fact that $G'$ is an induced minor of $G$. Property (4) is immediate from the construction and Property (5) follows because, for all $v\in V(G')\setminus \{r'\}$, we have $\alpha(N_{G'}[v])\leq \alpha(N_G[v]) \leq t$. This follows because when taking an induced minor, if a vertex is not involved in an edge-contraction, then the independence number of its closed neighbourhood does not increase.
    \end{proof}
   
    By induction, $G'$ admits a path-decomposition $\mathcal{B}'$ with independence number at most $f(d,h,k,t)$ whose first bag contains $N_{G'}[r']=\{r'\}\cup X$. Set $B_0:=N_G[r]\cup X$. We construct a path-decomposition $\mathcal{B}$ of $G$ by concatenating $B_0,\mathcal{B}_1,\dots,\mathcal{B}_p$ and $\mathcal{B}'$, adding $N_G[r]$ and $X$ to every bag that comes from $\mathcal{B}_1,\dots,\mathcal{B}_p$, then removing $\{r'\}$ from all bags in $\mathcal{B}'$. The next claim completes the proof of the lemma. 

    \begin{claim}
        $\mathcal{B}$ is a path-decomposition of $G$ with independence number at most $f(d,h,k,t)$ whose first bag contains $N_G[r]$.
    \end{claim}
    
    \begin{proof}
        Clearly $N_G[r]$ is contained in the first bag $B_0$ which has independence number at most $\alpha(N_G[r]\cup X)\leq 2g(d,h,k,t)\leq f(d,h,k,t)$. We now argue that $\mathcal{B}$ is a path-decomposition of $G$. Each vertex of $G$ is contained in consecutive bags of $\BB$, specifically $N_G[r]\cup X$ appears in every bag in $B_0,\BB_1,\dots,\BB_p$ and $X$ is in the first bag of $\BB'$. Since $G_1,\dots,G_p$ are components of $G-N_G[r]-X$, the neighbourhoods of a vertex in $G_i$ is contained in $V(G_i)\cup X\cup N_G[r]$. Note also that the neighbourhood of $N_G[r]$ is contained in $V(G_1)\cup\dots V(G_p)\cup X$. Thus $\BB$ is a path-decomposition of $G$. By induction, each bag from $\BB'$ has independence number at most $f(d,h,k,t)$, and each bag that comes from some $\BB_i$ has independence number at most $f((d+2)(t+1),h-1,k,t)+2g(d,h,k,t)=f(d,h,k,t)$, as required. 
    \end{proof}
\end{proof}
\section{Conclusion}
We conclude with some open problems that arise from our work.

First, which graphs have large path independence number? The complete binary tree $T_h$ of height $h$ has pathwidth $\lceil h/2 \rceil$ and thus has path independence number at least $h/4$ since it is bipartite. \citet{DALLARD2024indep} observed that $K_{t,t}$ has tree independence number $t$, so it also has large path independence number. Since the path independence number is closed under taking induced minor, any graph containing $T_h$ or $K_{t,t}$ as an induced minor must also have large path independence number. We conjecture that these are the only obstructions. 

\begin{conj}\label{ConjInducedMinorObs}
    There exists a function $f$ such that, for all $t,h\in \NN$, every graph $G$ with no $K_{t,t}$ or $T_h$ induced minor has path independence number at most $f(t,h)$.
\end{conj}

\citet{CHS2024pathwidth} showed that every $K_c$-free graph with no $K_{t,t}$ or $T_h$ induced minor has bounded pathwidth. So \cref{ConjInducedMinorObs} would generalise their result from pathwidth to path independence number. Note that the analogous version of \cref{ConjInducedMinorObs} for tree independence number where $T_h$ is replaced by the $(k\times k)$-grid is false. In particular, \citet{CHKTW2025bipartite} showed that the family of \emph{layered wheels} has unbounded tree independence number while excluding a grid and a complete bipartite graph as induced minors. This family of graphs is known, however, to contain all forests as induced minors, so it should not be considered as evidence against our conjecture.

Our final conjecture concerns the global structure of graphs that exclude a tree as an induced minor without any additional structural assumptions being made. We define the \defn{path domination number} of a graph $G$ to be the minimum integer $k$ such that $G$ admits a path-decomposition where every bag is contained within the union of the closed neighbourhood of at most $k$ vertices. 

\begin{conj}\label{ConjInducedMinorObsDomination}
    There exists a function $f$ such that, for all $h\in \NN$, every graph $G$ with no $T_h$ induced minor has path domination number at most $f(h)$.
\end{conj}

We provide two pieces of evidence for this conjecture. First, for $K_{1,t}$-free graphs, domination number and independence number of a graph differ by at most a factor of $t$, so \cref{MainThmForest} supports \cref{ConjInducedMinorObsDomination}. Second, a recent result of \citet{NSS2025fat} implies that every graph that excludes a tree as an induced minor has a path decomposition where each bag consists of a bounded number of balls of bounded radius (where the number and size of the balls depend on the excluded tree). The challenge then is to reduce the radius of these balls down to $1$.

{
\fontsize{11pt}{12pt}
\selectfont
	
\hypersetup{linkcolor={red!70!black}}
\setlength{\parskip}{2pt plus 0.3ex minus 0.3ex}

\bibliographystyle{DavidNatbibStyle}
\bibliography{RobReferences}
}

\appendix

\end{document}